\documentclass[pdflatex,sn-mathphys-num]{sn-jnl}

\usepackage{graphicx}
\usepackage{amsmath,amssymb,amsfonts}
\usepackage{amsthm}
\usepackage{mathrsfs}
\usepackage{bm}
\usepackage{xcolor}
\hypersetup{
  pdftitle={High-order energy-stable BGN parametric finite element methods for geometric flows},
  pdfauthor={Shu Ma and Qiqi Rao},
  pdfkeywords={parametric finite element method, BGN method, Runge--Kutta method, geometric flow, energy stability}
}

\theoremstyle{thmstyleone}
\newtheorem{theorem}{Theorem}[section]
\newtheorem{lemma}{Lemma}[section]
\newtheorem{proposition}{Proposition}[section]

\theoremstyle{thmstyletwo}

\newtheorem{remark}{Remark}[section]
\newtheorem{example}{Example}[section]
\numberwithin{equation}{section}

\def\X{{\bf X}}
\def\w{{\bf w}}
\def\n{{\bf n}}
\def\rhobm{\bm{\rho}}
\def\v{{\bf v}}
\def\u{{\bf u}}
\def\x{{\bf x}}
\def\d{{\mathrm d}}
\def\R {{\mathbb R}}
\def\id{{\bf  {id}}}

\graphicspath{{images/}}
\begin{document}

\title[High-order energy-stable BGN methods]{High-order energy-stable BGN parametric finite element methods for geometric flows}

\author[1]{\fnm{Shu} \sur{Ma}}\email{shu\_ma@hkbu.edu.hk}
\author*[2]{\fnm{Qiqi} \sur{Rao}}\email{qiqirao@cuhk.edu.hk}

\affil[1]{\orgdiv{Department of Mathematics}, \orgname{Hong Kong Baptist University},
\orgaddress{\city{Kowloon Tong}, \state{Hong Kong}, \country{China}}}
\affil*[2]{\orgdiv{Department of Mathematics}, \orgname{The Chinese University of Hong Kong},
\orgaddress{\city{Shatin}, \state{Hong Kong}, \country{China}}}

\abstract{
We construct high-order Runge--Kutta extensions of Barrett--Garcke--N\"urnberg (BGN) parametric finite element methods for curve-shortening flow and curve diffusion of planar curves and for mean curvature flow and surface diffusion of closed genus-$0$ surfaces. On each time slab $I_m=(t_m,t_{m+1}]$, the continuous equations are posed on the left-endpoint surface $\Gamma^m=\Gamma^{t_m}$ through the map $\X^{m,t}:\Gamma^m\to\Gamma^t$, whose target is the evolving surface at time $t$. For curves, this formulation follows by harmonic pullback from $\Gamma^0$ to $\Gamma^m$. For surfaces, an orientation-preserving harmonic diffeomorphism is posed separately on each time slab, and conformality yields the weight $\frac12|\nabla_{\Gamma^m}\X^{m,t}|^2$. Evaluating the time-slab equations at the Runge--Kutta internal times and applying mass-lumped parametric finite elements yields systems on the common domain $\Gamma^m$, with $\Gamma^{t_m+c_i\tau_m}$ as the intermediate target geometry. This internal-time discretization constructs high-order BGN-structured extensions for the four flows. For an algebraically stable tableau with nonnegative weights, every exact stage solution with nondegenerate intermediate configurations satisfies monotone decay of the discrete curve length or surface area for every positive time step for which that solution exists. Radau IIA experiments exhibit energy decay for all four flows and BGN-type mesh redistribution in the curve tests. The Hausdorff self-convergence results exhibit high-order behavior consistent with the corresponding design orders.
}

\keywords{high-order, parametric finite element method, BGN method, Runge--Kutta method, geometric flow, energy stability}

\pacs[MSC Classification]{65M60, 65M12, 65L06, 53E10}

\maketitle

\section{Introduction}

Curvature-driven evolution equations describe interfaces whose normal velocity is determined by curvature and its surface derivatives. Let $\Gamma^t\subset\R^d$, $d=2,3$, be a smooth closed curve or surface at time $t$ with outward unit normal $\n$. Curve-shortening flow and mean curvature flow satisfy, respectively,
\begin{align}\label{eq:intro_mcf}
\v\cdot\n=-\kappa,\qquad \Delta_{\Gamma^t}\id=-\kappa\n
\quad\text{and}\quad
\v\cdot\n=-H,\qquad \Delta_{\Gamma^t}\id=-H\n,
\end{align}
whereas curve diffusion and surface diffusion prescribe
\begin{align}
\v\cdot\n=\Delta_{\Gamma^t}\kappa
\qquad\text{and}\qquad
\v\cdot\n=\Delta_{\Gamma^t}H.
\end{align}
The second-order flows decrease curve length or surface area, while the fourth-order flows decrease the same energy and preserve enclosed area or volume at the continuous level. The geometric laws leave the tangential velocity undetermined. Although tangential motion does not change the smooth evolving set, it determines node redistribution and mesh quality in a parametric discretization.

Parametric finite element methods (PFEMs) approximate the evolving interface directly by a polygonal curve or a triangulated surface. Dziuk's method for mean curvature flow is an early example \cite{dziuk1990algorithm}, and B\"ansch, Morin, and Nochetto developed a parametric finite element treatment of surface diffusion \cite{banschMorinNochetto2005parametric}. Barrett, Garcke, and N\"urnberg (BGN) introduced mixed variational formulations for a broad class of second- and fourth-order geometric flows \cite{barrett2007variational,barrett2007parametric,barrett2008parametric}, with further developments reviewed in \cite{bgn2020review}. For curve-shortening flow, let $\X(\cdot,t):\Gamma^0\to\Gamma^t$ be a flow map with $\X(\cdot,0)=\id_{\Gamma^0}$. The continuous system underlying the BGN formulation can be written as
\begin{subequations}
\begin{align}
\partial_t\X\cdot(\n\circ\X)&=-\kappa\circ\X
    &&\text{on }\Gamma^0,\\
-\Delta_{\Gamma^t}\id&=\kappa\n
    &&\text{on }\Gamma^t.\label{eq:intro_bgn_b}
\end{align}
\end{subequations}
The first equation fixes the normal motion. After spatial discretization, the weak curvature equation also selects a tangential nodal velocity. Their coupling produces the characteristic BGN mesh redistribution and, for polygonal curves under suitable assumptions, asymptotic equidistribution. The same mixed structure yields fully discrete length or area estimates without a restriction coupling the time step to the mesh size \cite{barrett2007parametric,barrett2008parametric}.

Convergence analysis must track both the evolving geometry and its parametrization. Results are available for semidiscrete and linearly implicit approximations of curve-shortening flow \cite{dziuk1994convergence,li2020dziuk,yeCui2021dziuk}, for evolving finite element approximations of mean curvature flow of closed surfaces \cite{kovacsLiLubich2019mcf,baiLi2024analysis}, and for a stabilized BGN-type curve-shortening method whose analysis identifies the limiting tangential trajectories \cite{baiLi2025bgn}.

Harmonic maps provide a systematic description of tangential redistribution. Harmonic-map heat flow and the DeTurck trick have been used to construct parametric algorithms with controlled mesh properties \cite{elliottFritz2016mesh,elliottFritz2017deturck}. Artificial tangential velocities based on harmonic or minimal-deformation principles have also been developed and analyzed in evolving finite element methods \cite{hu2022evolving,baiHuLi2024artificial}. Duan and Li introduced a fixed-reference formulation in which, for a fixed reference manifold $\Sigma$, the flow map $\X(\cdot,t):\Sigma\to\Gamma^t$ satisfies
\begin{subequations}\label{eq:intro_harmonic_map}
\begin{align}
\partial_t\X\cdot(\n\circ\X)&=(\v\cdot\n)\circ\X,\\
-\Delta_\Sigma\X&=(\lambda\n)\circ\X,
\end{align}
\end{subequations}
with scalar multiplier $\lambda$ \cite{duan2024new}. Subsequent harmonic-map and minimal-deformation formulations combine energy estimates with tangential velocities designed for surface-mesh redistribution \cite{duan2024energy,duan2025mesh,gaoLi2025geometric,gaoGarckeLiTang2026mdr}.

For curves, the fixed-reference system \eqref{eq:intro_harmonic_map} is pulled back from $\Gamma^0$ to each time-node curve $\Gamma^m$. For surfaces, the corresponding harmonic-map system is posed separately on each time slab. Write $\v^t=\v(\cdot,t)$, and let $\n^t$ be the unit normal to $\Gamma^t$. Let $0=t_0<t_1<\cdots<t_M=T$, set $\Gamma^m=\Gamma^{t_m}$, and write $\X^{m,t}:\Gamma^m\to\Gamma^t$ for $t\in(t_m,t_{m+1}]$. Thus $\Gamma^m$ is the left-endpoint surface and common domain of the time-slab equations, whereas $\Gamma^t$ is their time-dependent target. For closed genus-$0$ surfaces, conformality of the orientation-preserving harmonic diffeomorphism \cite{heleinWood2008harmonic} gives the multiplier equation
\begin{subequations}\label{eq:intro_time_slab_surface}
\begin{align}
\partial_t\X^{m,t}\cdot(\n^t\circ\X^{m,t})
&=(\v^t\cdot\n^t)\circ\X^{m,t}
&&\text{on }\Gamma^m,\label{eq:intro_time_slab_surface_a}\\
-\Delta_{\Gamma^m}\X^{m,t}
&=\frac12|\nabla_{\Gamma^m}\X^{m,t}|^2
  (H^t\n^t)\circ\X^{m,t}
&&\text{on }\Gamma^m,\label{eq:intro_time_slab_surface_b}\\
\X^{m,t_m}&=\id_{\Gamma^m}.
\end{align}
\end{subequations}
For planar curves, the corresponding curvature equation is
\begin{align}
-\Delta_{\Gamma^m}\X^{m,t}
=|\nabla_{\Gamma^m}\X^{m,t}|^2
  [ (\kappa^t\n^t)\circ\X^{m,t}].
\end{align}
For the fourth-order flows, \eqref{eqn_Dekat_Dem} and \eqref{eqn_Dekat_Dem_surf} pull $\Delta_{\Gamma^t}\kappa^t$ and $\Delta_{\Gamma^t}H^t$ back to $\Gamma^m$. Combining these identities with the four normal-velocity laws yields the paired weak systems \eqref{MCF-BGN-High-weak}, \eqref{sd-BGN-High-weak}, \eqref{MCF-BGN-High-weak_surf}, and \eqref{sd-BGN-High-weak_surf}. 

Freezing the metric factors and normals at the left endpoint and replacing the velocity by a backward difference gives a first-order explicit linearization of these time-slab equations. With mass-lumped piecewise linear finite elements, the resulting linear coupled systems are precisely the corresponding classical BGN schemes.

Classical fully discrete BGN schemes are predominantly first-order in time. Recent higher-order developments include a second-order BGN formulation for curve flows \cite{jiang2024second}, BGN schemes based on backward differentiation formulas (BDFs) of orders two through four \cite{jiangSuZhang2024bdf}, and a second-order predictor--corrector method for curve and surface diffusion that retains the long-time mesh behavior of the first-order method \cite{jiangSuZhangZhang2025predictor}. Second-order schemes based on strictly parabolic formulations with tangential motion have been analyzed for curve shortening and curve diffusion \cite{deckelnickNurnberg2026second}, while a recent harmonic-map-based construction treats structure-preserving surface evolution \cite{duanYang2026second}.

For curve and surface diffusion, fully discrete area or volume conservation has been enforced through time-averaged normals and related weak formulations \cite{jiangLi2021area,bao2021structure}, and through Lagrange multipliers combined with Crank--Nicolson or BDF time stepping \cite{garckeJiangSuZhang2025lagrange}. Isoparametric elements provide higher-order spatial discretizations with energy and volume properties \cite{garckeNurnbergPraetoriusZhang2025isoparametric}. High-order energy-diminishing time discretizations based on averaged-vector-field collocation were studied in \cite{duanLiZhang2021highorder}, and an arbitrary-order space--time structure-preserving construction using auxiliary variables was proposed in \cite{zhangAndrewsFarrell2026arbitrary}.

We discretize the time-slab systems with high-order algebraically stable implicit Runge--Kutta methods. Their algebraic energy identity supplies the quadratic relation used for energy-decaying systems \cite{butcher1975stability,burrageButcher1979stability,hairer2006solving}. Related energy results for finite-dimensional gradient systems are discussed in \cite{hairerLubich2014energy}. Implicit Runge--Kutta discretizations of parabolic equations on prescribed evolving surfaces have been analyzed in \cite{dziukLubichMansour2012,kovacs2018higher}.
Here the evolving geometry is itself an unknown. Evaluating the time-slab equations at the internal times produces paired relations on $\Gamma^m$, and their mass-lumped parametric finite element discretization gives the BGN-structured stage systems. Testing each velocity--curvature pair with the stage curvature and velocity gives a stagewise energy-decay identity, and algebraic stability yields the one-step discrete energy inequality
\[
|\Gamma_h^{m+1}|\le |\Gamma_h^m|,
\]
where $|\Gamma_h^m|$ denotes the polygonal length for the curve flows and the polyhedral surface area for the surface flows. Thus the discrete energy is nonincreasing at every time step.

The contributions of the paper are as follows.
\begin{itemize}
	\item We derive a common-domain time-slab formulation of the four geometric flows on the fixed left-endpoint geometry \(\Gamma^m\). For curves, this is achieved by pulling back the harmonic-map system, while for closed genus-\(0\) surfaces it follows from an intervalwise conformal harmonic diffeomorphism. This formulation places all Runge--Kutta stages on the same finite element space and provides the continuous foundation for retaining the paired BGN structure at high order in time.

\item We construct high-order Runge--Kutta extensions of the BGN method for curve-shortening flow, curve diffusion, mean curvature flow, and surface diffusion. The construction evaluates the continuous velocity--curvature pairs at the Runge--Kutta internal times and discretizes them by mass-lumped parametric finite elements on the common mesh $\Gamma_h^m$, retaining the stagewise BGN cancellation that drives discrete energy decay.

\item We prove time-step-unrestricted discrete energy decay for all four flows: whenever the nonlinear stage system admits a nondegenerate solution, any algebraically stable Runge--Kutta method satisfies
\[
|\Gamma_h^{m+1}|\le |\Gamma_h^m|,
\]
where $|\Gamma_h^m|$ denotes the polygonal length or polyhedral surface area.

\item Numerical experiments with the Radau IIA family demonstrate monotone discrete energy decay for all four flows and the corresponding design orders in Hausdorff shape distance. The curve experiments also retain the characteristic BGN mesh redistribution.
\end{itemize}

The paper is organized as follows. Section~\ref{Se:2} introduces the parametric finite element setting, algebraically stable Runge--Kutta methods, and the algebraic Runge--Kutta energy identity. Section~\ref{Se:3} derives the time-slab harmonic-map formulation and the corresponding BGN extensions for curve-shortening flow and curve diffusion, together with their length-decay estimates. Section~\ref{Se:4} develops the conformal time-slab formulation and the area-decaying schemes for mean curvature flow and surface diffusion of closed genus-$0$ surfaces. Section~\ref{Se:5} presents the temporal-convergence, energy, and mesh-quality experiments, and Section~\ref{Se:6} concludes the paper.

\section{PFEM and Runge--Kutta methods}\label{Se:2}
\subsection{Parametric finite element method}
Let $\Gamma^0\subset\R^d$, $d=2,3$, be a closed smooth initial curve or surface. Choose nodes $p_j\in\Gamma^0$, $j=1,\ldots,N$, together with a quasi-uniform simplicial topology, and collect them in $\x^0=(p_1,\ldots,p_N)\in\R^{dN}$. These nodes and their connectivity determine the polygonal or polyhedral interpolant $\Gamma_h^0:=\Gamma_h[\x^0]$ with simplicial mesh $\mathcal T_h^0$ and mesh size $h$. The associated linear finite element space is
\begin{align}\label{FEMS_on_init}
S_h[\x^0]  := \Big\{ v \in C^0 (\Gamma_h [\x^0]): v|_{K}\; \; \text{is linear},\; \; \forall K \in \mathcal{T}_h^0 \Big\}.
\end{align}

The nodal vector is evolved with the velocity $\v$ of $\Gamma^t$. Denote its value at time $t$ by $\x(t)=(x_1(t),\cdots,x_N(t))$, where
\begin{align}
    \frac{\d}{\d t} x_j(t) = \v(x_j(t),t), \qquad x_j(0)=p_j, \qquad j=1,\ldots,N.
\end{align}
There is a unique finite element function $\X_h(\cdot,t)\in S_h[\x^0]^d$ interpolating these nodal values:
\begin{align}
\X_h(p_j,t)=x_j(t) \quad \mbox{for} \quad j=1,\dots,N.
\end{align}
This function is the discrete flow map, which maps $\Gamma_h[\x^0]$ to the evolving curve or surface
$\Gamma_h [\x (t)] := \X_h (\Gamma_h [\x^0], t)$. Because $\X_h(\cdot,t)$ is piecewise linear, $\Gamma_h[\x(t)]$ is the piecewise linear curve or surface determined by $\x(t)$. Analogously to \eqref{FEMS_on_init}, the finite element space transported to this mesh is
\begin{align}
 S_h[\x (t)]  := \Big\{ v \in C^0 (\Gamma_h [\x (t)]): v \circ \X_h (\cdot,t)|_{K}\; \; \text{is linear},\; \; \forall K \in \mathcal{T}_h^0 \Big\},
\end{align}

Let $\mathcal T_h^t$ denote the transported simplicial mesh of $\Gamma_h^t:=\Gamma_h[\x(t)]$. For piecewise continuous scalar- or vector-valued functions $u$ and $v$, define the mass-lumped inner product by the elementwise vertex rule
\begin{align}\label{mass-lumped-inner-product}
\big(u,v\big)^h_{\Gamma_h^t}
:=\sum_{K\in\mathcal T_h^t}\frac{|K|}{d}
\sum_{\ell=1}^{d}(u\cdot v)|_K(p_{K,\ell}),
\end{align}
where $p_{K,\ell}$, $\ell=1,\ldots,d$, are the vertices of $K$. For curves in $\R^2$, this is the composite trapezoidal rule on polygonal edges. For surfaces in $\R^3$, it is the three-vertex quadrature rule on each triangle.

\subsection{Runge--Kutta methods and algebraic stability}
We discretize time with an implicit Runge--Kutta (RK) method. For
\begin{align}
    \frac{\d}{\d t} y (t) = f (y,t), \quad t \in (0,T], \quad y (0) = y_0,
\end{align}
let $0=t_0<t_1<\cdots<t_M=T$ be a partition of $[0,T]$ with step sizes $\tau_m=t_{m+1}-t_m$. An $s$-stage RK method has Butcher tableau $(c,A,b)$, where $A=(a_{ij})_{i,j=1}^s\in\R^{s\times s}$ and $b=(b_1,\dots,b_s)^{\top}$, $c=(c_1,\dots,c_s)^{\top}\in\R^s$. Its stage values and update are
\begin{align}\label{RK-stage}
    y^{m,i} = y^m + \tau_m \sum_{j=1}^{s} a_{ij} f\big(y^{m,j}, t_m + c_j \tau_m\big), \qquad i = 1, \dots, s,
\end{align}
\begin{align}\label{RK-update}
    y^{m+1} = y^m + \tau_m \sum_{i=1}^{s} b_i f\big(y^{m,i}, t_m + c_i \tau_m\big).
\end{align}
The method is consistent when $\sum_i b_i=1$ and $\sum_j a_{ij}=c_i$ for every $i$.

We use algebraically stable RK methods in the sense of Butcher: $b_i\ge0$ for all $i$ and
\begin{align}\label{alg-stable}
    \mathcal M := DA + A^{\top}D - b b^{\top}
\end{align}
is positive semidefinite ($\mathcal M\succeq0$), where $D=\mathrm{diag}(b_1,\dots,b_s)$. The three classical families considered here are
\begin{itemize}
\item the \textbf{Gauss--Legendre} methods, for which $\mathcal M=0$,
\item the \textbf{Radau IIA} methods, which are $L$-stable and satisfy $\mathcal M\succeq0$,
\item the \textbf{Lobatto IIIC} methods, which are $L$-stable and satisfy $\mathcal M\succeq0$.
\end{itemize}
Throughout the paper, $k$ denotes the classical order of the RK method. For the $s$-stage Radau IIA methods used in Section~\ref{Se:5}, $k=2s-1$.

The following algebraic identity will be applied to the geometry-dependent Runge--Kutta stage systems developed in Sections~\ref{Se:3} and~\ref{Se:4}.

\begin{lemma}[Algebraic Runge--Kutta identity]\label{lem:B-stability}
Let $V$ be a real Hilbert space with inner product $\langle\cdot,\cdot\rangle$ and induced norm $\|\cdot\|$. With $f^{m,i}$ denoting the stage slopes in \eqref{RK-stage}--\eqref{RK-update}, the following identity holds in $V$:
\begin{align}\label{B-stability-identity}
    \|y^{m+1}\|^2 - \|y^m\|^2
    = 2\tau_m \sum_{i=1}^{s} b_i \langle y^{m,i}, f^{m,i}\rangle
    - \tau_m^2 \sum_{i,j=1}^{s} \mathcal M_{ij} \langle f^{m,i}, f^{m,j}\rangle,
\end{align}
where $\mathcal M$ is the matrix in \eqref{alg-stable}.
\end{lemma}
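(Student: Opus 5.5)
The plan is a direct algebraic expansion, using only the stage relations \eqref{RK-stage}--\eqref{RK-update} and bilinearity of $\langle\cdot,\cdot\rangle$; no stability property is needed, since $\mathcal M$ is merely defined by \eqref{alg-stable}. Write $f^{m,i}:=f(y^{m,i},t_m+c_i\tau_m)$. The update gives $y^{m+1}=y^m+\tau_m\sum_i b_i f^{m,i}$, and expanding the square yields
\begin{align*}
\|y^{m+1}\|^2-\|y^m\|^2
=2\tau_m\sum_{i=1}^s b_i\langle y^m,f^{m,i}\rangle
+\tau_m^2\sum_{i,j=1}^s b_ib_j\langle f^{m,i},f^{m,j}\rangle .
\end{align*}

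Next I eliminate $y^m$ in favour of the stage values. By \eqref{RK-stage}, $y^m=y^{m,i}-\tau_m\sum_j a_{ij}f^{m,j}$ for each $i$. Substituting this into the $i$-th term of the first sum gives
\begin{align*}
2\tau_m\sum_i b_i\langle y^{m,i},f^{m,i}\rangle
-2\tau_m^2\sum_{i,j} b_ia_{ij}\langle f^{m,j},f^{m,i}\rangle .
\end{align*}

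The only step requiring care is the symmetrization of the quadratic term. Since the Gram matrix $G_{ij}=\langle f^{m,i},f^{m,j}\rangle$ is symmetric, $\sum_{i,j}b_ia_{ij}G_{ij}=\tfrac12\sum_{i,j}(b_ia_{ij}+b_ja_{ji})G_{ij}$. Moreover, $b_ia_{ij}+b_ja_{ji}$ is exactly the $(i,j)$ entry of $DA+A^\top D$. The total $\tau_m^2$ coefficient is therefore $-\sum_{i,j}(DA+A^\top D-bb^\top)_{ij}G_{ij}=-\sum_{i,j}\mathcal M_{ij}G_{ij}$, which is \eqref{B-stability-identity}.

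I expect no genuine obstacle here. The main care is the index bookkeeping in the symmetrization: one must check that the entry $(DA)_{ij}=b_ia_{ij}$ pairs with $G_{ij}$, and that its transpose accounts for the other half of the sum.
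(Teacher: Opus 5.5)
Your proposal is correct and follows essentially the same route as the paper's proof: expand the square, eliminate $\langle y^m,f^{m,i}\rangle$ via the stage relation, and symmetrize the cross term using the symmetry of the Gram matrix so that $b_ia_{ij}+b_ja_{ji}-b_ib_j=\mathcal M_{ij}$. As in the paper, only the stage and update relations and bilinearity are used, never the form of $f$. This is what allows the lemma to be applied later to the stage velocities $F_h^{m,i}$ in the quotient gradient space.
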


\begin{proof}
Expanding the square and using the stage relation to eliminate $\langle y^m,f^{m,i}\rangle$ gives
\begin{align*}
\|y^{m+1}\|^2 - \|y^m\|^2
&= 2\tau_m \sum_{i=1}^{s} b_i \langle y^m, f^{m,i}\rangle
    + \tau_m^2 \sum_{i,j=1}^{s} b_i b_j
      \langle f^{m,i}, f^{m,j}\rangle\\
&= 2\tau_m \sum_{i=1}^{s} b_i
    \langle y^{m,i}, f^{m,i}\rangle\\
&\quad - 2\tau_m^2 \sum_{i,j=1}^{s} b_i a_{ij}
    \langle f^{m,j}, f^{m,i}\rangle
    + \tau_m^2 \sum_{i,j=1}^{s} b_i b_j
    \langle f^{m,i}, f^{m,j}\rangle\\
&= 2\tau_m \sum_{i=1}^{s} b_i \langle y^{m,i}, f^{m,i}\rangle\\
&\quad - \tau_m^2 \sum_{i,j=1}^{s}
    \big( b_i a_{ij} + b_j a_{ji} - b_i b_j \big)
    \langle f^{m,i}, f^{m,j}\rangle,
\end{align*}
where symmetry of the inner product and the relabeling $(i,j)\mapsto(j,i)$ combine the double sums. This is \eqref{B-stability-identity}. \end{proof}

\begin{remark}
If the RK method is algebraically stable and every stage satisfies $\langle y^{m,i},f^{m,i}\rangle\le0$, Lemma~\ref{lem:B-stability} gives $\|y^{m+1}\|\le\|y^m\|$. The first sum in \eqref{B-stability-identity} is nonpositive, and the second sum is nonnegative because $\mathcal M\succeq0$ and $(\langle f^{m,i},f^{m,j}\rangle)$ is a Gram matrix. This is the norm-decay consequence of algebraic stability under the stated stage condition \cite{hairer2006solving}.
\end{remark}

\section{RK-based PFEMs for curve flows}\label{Se:3}
Let $\Gamma^t\subset\R^2$ be a planar curve. We transport the harmonic-map system \eqref{eq:intro_harmonic_map} from $\Gamma^0$ to an arbitrary time-node curve $\Gamma^m$. The resulting equations are evaluated at the Runge--Kutta internal times on the common domain $\Gamma^m$ and expose the metric-weighted BGN pair.

\subsection{Time-slab harmonic-map formulation for curves}
Suppose that $\Gamma^t$ is a smooth embedded closed curve evolving with velocity $\v^t:=\v(\cdot,t)$. Let $\rhobm^t:\Gamma^t\to\R^2$ be a consistently oriented unit tangent vector field, and define
\begin{align}
\partial_{\rhobm^t}:=\rhobm^t\cdot\nabla_{\Gamma^t}.
\end{align}
Throughout this subsection, assume that $\X^t:\Gamma^0\to\Gamma^t$ is continuously differentiable in time and twice continuously differentiable in space, is orientation preserving, satisfies $\X^0=\id_{\Gamma^0}$, and has $|\partial_{\rhobm^0}\X^t|>0$. Harmonic parametrizations of a closed curve are invariant under phase shifts. We select one representative by prescribing the image of a marked material point. This normalization identifies the smooth pullback map and does not enter the differential identities below. The harmonic-map formulation is
\begin{subequations}\label{pde_X_kapa}
\begin{align}\label{pde-X-t0}
\partial_t \X^t \cdot (\n^{t}\circ \X^t)  & =  (\v^t \cdot \n^{t})\circ \X^t\quad  \mbox{on} \quad \Gamma^0,\\
\label{pde-kappa-t0}
- \Delta_{\Gamma^0} \X^t & = (\lambda^{t} \n^{t}) \circ \X^t\quad  \mbox{on} \quad \Gamma^0,
\end{align}
\end{subequations}
where $\lambda^{t}:\Gamma^{t}\to\R$ is a scalar-valued Lagrange multiplier. At fixed $t$, the second equation is the Euler--Lagrange equation for the Dirichlet energy
\begin{align}
E[\X^t] = \frac{1}{2} \int_{\Gamma^0} |\nabla_{\Gamma^0} \X^t|^2 \, \d \Gamma^0,
\end{align}
subject to the pointwise target constraint $\X^t(p)\in\Gamma^t$. Equation~\eqref{pde-X-t0} prescribes the normal evolution, whereas the target constraint in the second equation determines the multiplier. For curve-shortening flow, $\lambda^t$ can be expressed in terms of the curvature $\kappa^t$.

Because $\Gamma^{t}$ is generated by $\X^t$, the vector field $\partial_{\rhobm^{0}}\X^t\circ (\X^t)^{-1}$ is tangential on $\Gamma^{t}$. Thus,
\begin{align}\label{tan_vec_s}
\rhobm^t = \frac{\partial_{\rhobm^{0}} \X^t }{| \partial_{\rhobm^0} \X^t |} \circ (\X^t)^{-1}.
\end{align}
For any scalar-valued smooth function $f$ on $\Gamma^{t}$, the chain rule and \eqref{tan_vec_s} give
\begin{align}\label{rel_pt_rho_s}
\partial_{\rhobm^0} (f \circ \X^t) = &\; (\rhobm^0)^{\top} \nabla_{\Gamma^0} (f \circ \X^t) \notag \\
\phantom{\partial_{\rhobm^0} (f \circ \X^t) = }= &\; (\rhobm^0)^{\top} \nabla_{\Gamma^0}(\X^{t})^{\top}(\nabla_{\Gamma^{t}}f)\circ \X^{t} \notag \\
\phantom{\partial_{\rhobm^0} (f \circ \X^t) = }= &\; \partial_{\rhobm^0}\X^{t} \cdot (\nabla_{\Gamma^{t}}f)\circ \X^{t} \notag \\
\phantom{\partial_{\rhobm^0} (f \circ \X^t) = }= &\; | \partial_{\rhobm^{0}}\X^{t} | \rhobm^{t} \circ \X^{t} \cdot (\nabla_{\Gamma^{t}}f)\circ \X^{t} \quad \quad  \text{by using \eqref{tan_vec_s}} \notag \\
\phantom{\partial_{\rhobm^0} (f \circ \X^t) = }= &\; | \partial_{\rhobm^{0}}\X^{t} | (\partial_{\rhobm^{t}}f )\circ \X^{t}.
\end{align}
Since $\nabla_{\Gamma^{t}}f$ is tangential,
\begin{align}\label{eqn_tant_gradf}
    \nabla_{\Gamma^{t}}f = (\rhobm^{t}\cdot \nabla_{\Gamma^{t}}f) \rhobm^{t} = (\partial_{\rhobm^{t}}f) \rhobm^{t}.
\end{align}
Applying the tangential derivative once more to \eqref{eqn_tant_gradf} yields
\begin{align}\label{Tan_lap_f}
    \Delta_{\Gamma^{t}}f = \partial_{\rhobm^{t}\rhobm^{t}} f.
\end{align}
Applying these identities componentwise to a vector-valued smooth function $\u$ on $\Gamma^{t}$ gives
\begin{subequations}
    \begin{align}
\nabla_{\Gamma^{t}} \u = &\; (\partial_{\rhobm^{t}}\u) (\rhobm^{t})^{\top}, \label{t_grad}\\
\Delta_{\Gamma^{t}} \u = &\; \partial_{\rhobm^{t}\rhobm^{t}}\u. \label{lap_bel_t}
    \end{align}
\end{subequations}

\begin{lemma}\label{lemma:isometric}
Under the diffeomorphism and nondegeneracy assumptions above, if $\X^t$ satisfies \eqref{pde-kappa-t0}, then it is a constant-speed parametrization, i.e.,
\begin{align}
\partial_{\rhobm^{0}}  \big|\partial_{\rhobm^{0}} \X^{t} \big|   = 0,
\end{align}
which implies that $ | \nabla_{\Gamma^0} \X^{t} | $ is constant for fixed $t$.
\end{lemma}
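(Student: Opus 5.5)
The plan is to compute $\Delta_{\Gamma^0}\X^t$ with the tangential calculus \eqref{t_grad}--\eqref{lap_bel_t} on $\Gamma^0$ and then compare tangential components in \eqref{pde-kappa-t0}. By \eqref{lap_bel_t} applied on $\Gamma^0$, we have $\Delta_{\Gamma^0}\X^t=\partial_{\rhobm^0\rhobm^0}\X^t$. Write $\partial_{\rhobm^0}\X^t=g\,(\rhobm^t\circ\X^t)$ with $g:=|\partial_{\rhobm^0}\X^t|>0$; this is \eqref{tan_vec_s}. Differentiating once more gives
\begin{align*}
\partial_{\rhobm^0\rhobm^0}\X^t=(\partial_{\rhobm^0}g)\,\rhobm^t\circ\X^t+g\,\partial_{\rhobm^0}(\rhobm^t\circ\X^t).
\end{align*}
We then apply \eqref{rel_pt_rho_s} componentwise to the second term, which gives $\partial_{\rhobm^0}(\rhobm^t\circ\X^t)=g\,(\partial_{\rhobm^t}\rhobm^t)\circ\X^t$. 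Since $|\rhobm^t|=1$, differentiating $\rhobm^t\cdot\rhobm^t=1$ shows that $\partial_{\rhobm^t}\rhobm^t\cdot\rhobm^t=0$. Hence $\partial_{\rhobm^t}\rhobm^t$ is parallel to $\n^t$; indeed it equals $\kappa^t\n^t$ up to the sign convention, consistent with \eqref{eq:intro_bgn_b} and \eqref{lap_bel_t}.

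Next we take the inner product of \eqref{pde-kappa-t0} with $\rhobm^t\circ\X^t$. The right-hand side $(\lambda^t\n^t)\circ\X^t$ is normal, so its tangential part vanishes. On the left, the term $g^2(\partial_{\rhobm^t}\rhobm^t)\circ\X^t$ is normal and drops out, leaving $-\partial_{\rhobm^0}g=0$. This is exactly the claimed identity $\partial_{\rhobm^0}|\partial_{\rhobm^0}\X^t|=0$.

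Finally, \eqref{t_grad} on $\Gamma^0$ gives $\nabla_{\Gamma^0}\X^t=(\partial_{\rhobm^0}\X^t)(\rhobm^0)^\top$, so $|\nabla_{\Gamma^0}\X^t|=g$. Because $\Gamma^0$ is a connected closed curve and $g$ has vanishing arclength derivative along it, $g$ is constant for fixed $t$.

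The only delicate step is justifying that $\partial_{\rhobm^t}\rhobm^t$ is purely normal and that the chain rule \eqref{rel_pt_rho_s} applies to the vector field $\rhobm^t$. This requires $\X^t$ to be $C^2$ in space, so that $\rhobm^t$ is $C^1$ via \eqref{tan_vec_s}, together with the nondegeneracy assumption $g>0$. Both are included in the standing hypotheses, so I expect no real obstacle beyond careful bookkeeping.
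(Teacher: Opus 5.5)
Your proposal is correct and is essentially the paper's argument: both write $\Delta_{\Gamma^0}\X^t=\partial_{\rhobm^0\rhobm^0}\X^t$ and take the component of \eqref{pde-kappa-t0} along the unit tangent $\rhobm^t\circ\X^t$, where the normal right-hand side drops out. The paper gets there in one line via $\partial_{\rhobm^0}|\partial_{\rhobm^0}\X^t|=\frac{\partial_{\rhobm^0}\X^t}{|\partial_{\rhobm^0}\X^t|}\cdot\partial_{\rhobm^0\rhobm^0}\X^t$, so your detour through $\partial_{\rhobm^t}\rhobm^t$ (the step you flagged as delicate) is unnecessary; $|\rhobm^t\circ\X^t|=1$ alone already makes $\partial_{\rhobm^0}(\rhobm^t\circ\X^t)$ orthogonal to $\rhobm^t\circ\X^t$.
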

\begin{proof}
Direct differentiation gives
\begin{align}\label{pt_ro_abX}
\partial_{\rhobm^{0}} | \partial_{\rhobm^{0}} \X^{t} | = \frac{\partial_{\rhobm^{0}}\X^t}{| \partial_{\rhobm^{0}}\X^{t} |}\cdot \partial_{\rhobm^{0} \rhobm^{0}}\X^{t}.
\end{align}
For the curve $\Gamma^0$, applying \eqref{lap_bel_t} with $t=0$ componentwise to $\X^t$ gives
\begin{align}
     \partial_{\rhobm^{0} \rhobm^{0}} \X^{t} = \Delta_{\Gamma^0} \X^{t}.
\end{align}
Using \eqref{pde-kappa-t0},
\begin{align}\label{pt_roro_X}
\partial_{\rhobm^{0} \rhobm^{0}} \X^{t}  = - (\lambda^{t} \n^{t}) \circ \X^{t} .
\end{align}
Substitution of \eqref{pt_roro_X} and \eqref{tan_vec_s} into \eqref{pt_ro_abX} gives
\begin{align}\label{est_ptro0X}
\partial_{\rhobm^{0}} | \partial_{\rhobm^{0}} \X^{t} | = - (\lambda^{t} \rhobm^{t} \cdot \n^{t})\circ \X^{t} = 0,
\end{align}
where the last equality follows because $\rhobm^{t}$ is tangential and $\n^{t}$ is normal. Hence $| \partial_{\rhobm^{0}}\X^{t} |$ is constant. Since $ \nabla_{\Gamma^0}\X^{t} = (\partial_{\rhobm^{0}}\X^{t})(\rhobm^{0})^{\top} $ by \eqref{t_grad},
\begin{align}
    | \nabla_{\Gamma^0} \X^{t} | = | \partial_{\rhobm^{0}}\X^{t} | = \text{constant},
\end{align}
for fixed $t$.
\end{proof}

For $0\le r,t\le T$, define
$\X^{r,t}:=\X^{t}\circ(\X^{r})^{-1}:\Gamma^{r}\to\Gamma^{t}$.
\begin{lemma}\label{lem_trans_grad}
Under the diffeomorphism and nondegeneracy assumptions above, any scalar- or vector-valued smooth function $f$ on $\Gamma^t$ satisfies
\begin{align}\label{idt_ptros_fXst}
\partial_{\rhobm^r} (f \circ \X^{r,t}) = |\partial_{\rhobm^r}\X^{r,t}|(\partial_{\rhobm^t}f)\circ\X^{r,t}.
\end{align}
\end{lemma}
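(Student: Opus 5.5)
The plan is to repeat the chain-rule computation \eqref{rel_pt_rho_s}, now with base curve $\Gamma^r$ in place of $\Gamma^0$ and the map $\X^{r,t}$ in place of $\X^t$. Everything hinges on an analogue of \eqref{tan_vec_s}:
\begin{align*}
\rhobm^t=\frac{\partial_{\rhobm^r}\X^{r,t}}{|\partial_{\rhobm^r}\X^{r,t}|}\circ(\X^{r,t})^{-1}.
\end{align*}
So I first establish this identity, and then the lemma follows exactly as in \eqref{rel_pt_rho_s}.

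\textbf{Step 1: $\X^{r,t}$ is a diffeomorphism.} Since $\X^r$ and $\X^t$ are orientation-preserving $C^1$ diffeomorphisms from $\Gamma^0$ with nonvanishing tangential derivative, $\X^{r,t}=\X^t\circ(\X^r)^{-1}$ is an orientation-preserving diffeomorphism $\Gamma^r\to\Gamma^t$.

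\textbf{Step 2: compute $\partial_{\rhobm^r}\X^{r,t}$.} Apply \eqref{rel_pt_rho_s} componentwise to the function $(\X^r)^{-1}$ on $\Gamma^r$. Since $\X^{r,t}\circ\X^r=\X^t$, this gives
\begin{align*}
\partial_{\rhobm^0}\X^t=|\partial_{\rhobm^0}\X^r|\,(\partial_{\rhobm^r}\X^{r,t})\circ\X^r .
\end{align*}
Hence $\partial_{\rhobm^r}\X^{r,t}$ is a positive multiple of $\partial_{\rhobm^0}\X^t\circ(\X^r)^{-1}$. Its magnitude is nonzero because $|\partial_{\rhobm^0}\X^t|>0$.

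\textbf{Step 3: identify the unit tangent.} Normalizing the relation from Step 2 and using \eqref{tan_vec_s} for $\X^t$ gives
\begin{align*}
\frac{\partial_{\rhobm^r}\X^{r,t}}{|\partial_{\rhobm^r}\X^{r,t}|}
=\frac{\partial_{\rhobm^0}\X^t}{|\partial_{\rhobm^0}\X^t|}\circ(\X^r)^{-1}
=\rhobm^t\circ\X^t\circ(\X^r)^{-1}
=\rhobm^t\circ\X^{r,t}.
\end{align*}
This is the desired analogue of \eqref{tan_vec_s}. The positivity of the scalar factor in Step 2 is what guarantees the orientations agree.

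\textbf{Step 4: the chain rule.} Write
\begin{align*}
\partial_{\rhobm^r}(f\circ\X^{r,t})=(\rhobm^r)^\top\nabla_{\Gamma^r}(\X^{r,t})^\top(\nabla_{\Gamma^t}f)\circ\X^{r,t}=\partial_{\rhobm^r}\X^{r,t}\cdot(\nabla_{\Gamma^t}f)\circ\X^{r,t}.
\end{align*}
Substituting the identity from Step 3 gives $|\partial_{\rhobm^r}\X^{r,t}|\,(\partial_{\rhobm^t}f)\circ\X^{r,t}$. For vector-valued $f$, apply this componentwise.

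\textbf{Main obstacle.} The only delicate point is Step 3: showing that $\partial_{\rhobm^r}\X^{r,t}$ points along $+\rhobm^t$ rather than $-\rhobm^t$. This rests on the consistent orientation of the fields $\rhobm^t$ together with orientation preservation of $\X^r$ and $\X^t$.
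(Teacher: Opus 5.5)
Your proof is correct, and it is organized somewhat differently from the paper's. The paper never writes down the $\Gamma^r$-analogue of \eqref{tan_vec_s}. Instead it expands $\partial_{\rhobm^0}(f\circ\X^t)$ twice: once via \eqref{rel_pt_rho_s} at time $t$, and once via the same identity at time $r$ applied to $g=f\circ\X^{r,t}$, using $f\circ\X^t=(f\circ\X^{r,t})\circ\X^r$. It equates the two and composes with $(\X^r)^{-1}$, which gives the lemma with the factor $\bigl(|\partial_{\rhobm^0}\X^t|/|\partial_{\rhobm^0}\X^r|\bigr)\circ(\X^r)^{-1}$. Only at the end does it use your Step~2 identity, and only to identify that factor with $|\partial_{\rhobm^r}\X^{r,t}|$.

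You use the Step~2 identity for its direction as well. From it you obtain $\rhobm^t\circ\X^{r,t}=\partial_{\rhobm^r}\X^{r,t}/|\partial_{\rhobm^r}\X^{r,t}|$ and then redo the chain rule directly on $\Gamma^r$. Your route shows explicitly that the setup \eqref{tan_vec_s}--\eqref{rel_pt_rho_s} transfers verbatim from $\Gamma^0$ to $\Gamma^r$. The paper's route needs only the norm of $\partial_{\rhobm^r}\X^{r,t}$, because the directional information is already contained in \eqref{rel_pt_rho_s} at times $r$ and $t$.

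In either route, the orientation issue you flag as the main obstacle is not a real obstacle. It is absorbed into \eqref{tan_vec_s} at times $r$ and $t$, and the positivity of $|\partial_{\rhobm^0}\X^r|$ settles the sign, exactly as your Step~3 shows.

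One wording fix in Step~2: the function to which you apply \eqref{rel_pt_rho_s} (with $\X^t$ replaced by $\X^r$) is $\X^{r,t}$, not $(\X^r)^{-1}$. With that change, $\X^{r,t}\circ\X^r=\X^t$ yields your displayed identity, which is the paper's formula for $\partial_{\rhobm^r}\X^{r,t}$.
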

\begin{proof}
Replace $\X^t$ in \eqref{rel_pt_rho_s} by $\X^{r}$ and let $g$ be defined on $\Gamma^r$. Then
\begin{align}\label{rel_pt_rho_s_Xs_f}
\partial_{\rhobm^0}(g\circ\X^r)=|\partial_{\rhobm^0}\X^r|(\partial_{\rhobm^r}g)\circ\X^r.
\end{align}
Applying \eqref{rel_pt_rho_s_Xs_f} to $f\circ\X^{r,t}$ gives
\begin{align}
\partial_{\rhobm^0}(f\circ\X^t)=\partial_{\rhobm^0}(f\circ\X^{r,t}\circ\X^r)
=|\partial_{\rhobm^0}\X^r|(\partial_{\rhobm^r}(f\circ\X^{r,t}))\circ\X^r. \label{pt_ro0_fXs_t_Xs}
\end{align}
Combining this identity with \eqref{rel_pt_rho_s} yields
\begin{align}
|\partial_{\rhobm^0}\X^t|(\partial_{\rhobm^t}f)\circ\X^t
=|\partial_{\rhobm^0}\X^r|(\partial_{\rhobm^r}(f\circ\X^{r,t}))\circ\X^r.
\end{align}
Composing with $(\X^r)^{-1}$ gives
\begin{align}\label{idt_ptros_Xf}
\partial_{\rhobm^r}(f\circ\X^{r,t})
=\frac{|\partial_{\rhobm^0}\X^t|}{|\partial_{\rhobm^0}\X^r|}\circ(\X^r)^{-1}(\partial_{\rhobm^t}f)\circ\X^{r,t}.
\end{align}
Applying \eqref{rel_pt_rho_s_Xs_f} to $\X^{r,t}$ also gives
\begin{align}
\partial_{\rhobm^r}\X^{r,t}=\frac{\partial_{\rhobm^0}\X^t}{|\partial_{\rhobm^0}\X^r|}\circ(\X^r)^{-1},
\end{align}
and therefore
\begin{align}\label{rel_pt_rho_s_X}
|\partial_{\rhobm^r}\X^{r,t}|=\frac{|\partial_{\rhobm^0}\X^t|}{|\partial_{\rhobm^0}\X^r|}\circ(\X^r)^{-1}.
\end{align}
Substitution of \eqref{rel_pt_rho_s_X} into \eqref{idt_ptros_Xf} proves the lemma.
\end{proof}

The harmonic pullback transforms the Laplace--Beltrami operator as follows.
\begin{lemma}\label{lem_trans_lap}
If the map $\X^{\theta}:\Gamma^0\to\Gamma^{\theta}$ is harmonic for every $\theta\in(0,T]$, then for any $0\le r\le t\le T$ and any smooth function $f:\Gamma^t\to\R^n$ (with $n=1$ or $2$, where for $n=2$ both the Laplacian and the composition act componentwise),
\begin{align}
\Delta_{\Gamma^r}(f\circ\X^{r,t})=|\nabla_{\Gamma^r}\X^{r,t}|^2(\Delta_{\Gamma^t}f)\circ\X^{r,t}.
\end{align}
\end{lemma}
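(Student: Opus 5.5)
The plan is to work entirely with tangential derivatives, using \eqref{Tan_lap_f} on both $\Gamma^r$ and $\Gamma^t$, so that
\begin{align*}
\Delta_{\Gamma^r}(f\circ\X^{r,t})=\partial_{\rhobm^r}\partial_{\rhobm^r}(f\circ\X^{r,t})
\quad\text{and}\quad
\Delta_{\Gamma^t}f=\partial_{\rhobm^t\rhobm^t}f .
\end{align*}
Lemma~\ref{lem_trans_grad} gives the first derivative, $\partial_{\rhobm^r}(f\circ\X^{r,t})=|\partial_{\rhobm^r}\X^{r,t}|\,(\partial_{\rhobm^t}f)\circ\X^{r,t}$. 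We differentiate this once more with the product rule. This yields a term $\partial_{\rhobm^r}|\partial_{\rhobm^r}\X^{r,t}|\cdot(\partial_{\rhobm^t}f)\circ\X^{r,t}$ and a term $|\partial_{\rhobm^r}\X^{r,t}|\,\partial_{\rhobm^r}\big((\partial_{\rhobm^t}f)\circ\X^{r,t}\big)$.

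For the second term, we apply Lemma~\ref{lem_trans_grad} again to the function $\partial_{\rhobm^t}f$. This gives $|\partial_{\rhobm^r}\X^{r,t}|^2(\partial_{\rhobm^t\rhobm^t}f)\circ\X^{r,t}$. By \eqref{t_grad}, $|\nabla_{\Gamma^r}\X^{r,t}|=|\partial_{\rhobm^r}\X^{r,t}|$, so this term is exactly the right-hand side of the claim. For vector-valued $f$, all of this is done componentwise.

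The key step, and the only place harmonicity enters, is to show that the first term vanishes, i.e.\ $\partial_{\rhobm^r}|\partial_{\rhobm^r}\X^{r,t}|=0$. We use \eqref{rel_pt_rho_s_X}, which expresses the speed as a ratio:
\begin{align*}
|\partial_{\rhobm^r}\X^{r,t}|=\frac{|\partial_{\rhobm^0}\X^t|}{|\partial_{\rhobm^0}\X^r|}\circ(\X^r)^{-1}.
\end{align*}
By Lemma~\ref{lemma:isometric}, applied to both $\X^t$ and $\X^r$ (harmonic by hypothesis for $\theta=t,r$), the numerator and the denominator are each spatially constant for fixed time. Hence the ratio is constant on $\Gamma^0$, and its composition with $(\X^r)^{-1}$ is constant on $\Gamma^r$, so its $\partial_{\rhobm^r}$-derivative vanishes. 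The case $r=0$ is trivial, since $\X^0=\id$ has unit speed; in every other case the hypothesis covers $\theta\in(0,T]$.

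The main obstacle is checking that Lemma~\ref{lem_trans_grad} may legitimately be applied to $\partial_{\rhobm^t}f$. This requires that $\partial_{\rhobm^t}f$ be smooth on $\Gamma^t$, which holds because $\rhobm^t$ is a smooth consistently oriented field on $\Gamma^t$ for smooth $f$. It also requires that the orientations of $\rhobm^r$ and $\rhobm^t$ are compatible, as in \eqref{tan_vec_s}, so that no sign is introduced. Since the final formula involves only the square of the speed and a second derivative in which the orientation enters twice, any sign convention cancels anyway.
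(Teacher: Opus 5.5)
Your proposal is correct and follows the paper's proof essentially step by step. You differentiate the identity of Lemma~\ref{lem_trans_grad} once more along $\rhobm^r$, use \eqref{rel_pt_rho_s_X} with Lemma~\ref{lemma:isometric} to see that $|\partial_{\rhobm^r}\X^{r,t}|$ is constant on $\Gamma^r$, and then apply Lemma~\ref{lem_trans_grad} again to $\partial_{\rhobm^t}f$; the only cosmetic differences are that you write out the product rule (showing the speed-derivative term vanishes) and treat $r=0$ explicitly, whereas the paper simply pulls the constant factor out of the derivative.
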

\begin{proof}
Applying $\partial_{\rhobm^r}$ to \eqref{idt_ptros_fXst} gives
\begin{align}
\Delta_{\Gamma^r}(f\circ\X^{r,t})
&=\partial_{\rhobm^r\rhobm^r}(f\circ\X^{r,t}) \quad \text{by \eqref{lap_bel_t}} \notag\\
&=\partial_{\rhobm^r}\bigl(|\partial_{\rhobm^r}\X^{r,t}|(\partial_{\rhobm^t}f)\circ\X^{r,t}\bigr).
\end{align}
By \eqref{rel_pt_rho_s_X} and Lemma~\ref{lemma:isometric}, $|\partial_{\rhobm^r}\X^{r,t}|$ is constant for fixed $r,t$. Consequently,
\begin{align}\label{lap_bel_st}
\Delta_{\Gamma^r}(f\circ\X^{r,t})=|\partial_{\rhobm^r}\X^{r,t}|\partial_{\rhobm^r}\bigl((\partial_{\rhobm^t}f)\circ\X^{r,t}\bigr).
\end{align}
Applying Lemma~\ref{lem_trans_grad} to $\partial_{\rhobm^t}f$ yields
\begin{align}\label{pt_rho_s_pt_rho_t_fXst}
\partial_{\rhobm^r}\bigl((\partial_{\rhobm^t}f)\circ\X^{r,t}\bigr)
=|\partial_{\rhobm^r}\X^{r,t}|(\partial_{\rhobm^t\rhobm^t}f)\circ\X^{r,t}
=|\partial_{\rhobm^r}\X^{r,t}|(\Delta_{\Gamma^t}f)\circ\X^{r,t}.
\end{align}
Substituting \eqref{pt_rho_s_pt_rho_t_fXst} into \eqref{lap_bel_st} proves the result.
\end{proof}

For a closed curve, apply Lemma~\ref{lem_trans_lap} componentwise to $f=\id_{\Gamma^t}$ and use \eqref{eq:intro_mcf}.

\begin{proposition}\label{lem_mid_map}
If the map $\X^{\theta}:\Gamma^0\to\Gamma^{\theta}$ is harmonic for every $\theta\in(0,T]$ and $\Gamma^0$ is a closed curve, then for any $0\le r\le t\le T$ the pullback map $\X^{r,t}:\Gamma^r\to\Gamma^t$ is also harmonic and satisfies
\begin{subequations}
\begin{align}
\partial_t\X^{r,t}\cdot(\n^t\circ\X^{r,t})&=(\v^t\cdot\n^t)\circ\X^{r,t}\quad\mbox{on}\quad\Gamma^r,\label{eqn_Xst_a}\\
-\Delta_{\Gamma^r}\X^{r,t}&=|\nabla_{\Gamma^r}\X^{r,t}|^2(\kappa^t\n^t)\circ\X^{r,t}\quad\mbox{on}\quad\Gamma^r.\label{eqn_Xst_b}
\end{align}
\end{subequations}
In particular, $\lambda^t\circ\X^t=|\nabla_{\Gamma^0}\X^t|^2\kappa^t\circ\X^t$.
\end{proposition}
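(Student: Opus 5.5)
The plan is to apply Lemma~\ref{lem_trans_lap} componentwise with $f=\id_{\Gamma^t}$. Since $\id_{\Gamma^t}\circ\X^{r,t}=\X^{r,t}$, the lemma gives
\begin{align*}
\Delta_{\Gamma^r}\X^{r,t}=|\nabla_{\Gamma^r}\X^{r,t}|^2(\Delta_{\Gamma^t}\id)\circ\X^{r,t}.
\end{align*}
Substituting the geometric identity $\Delta_{\Gamma^t}\id=-\kappa^t\n^t$ from \eqref{eq:intro_mcf} then yields \eqref{eqn_Xst_b}.

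Harmonicity of $\X^{r,t}$ follows from the same display. The right-hand side is normal to $\Gamma^t$ at $\X^{r,t}$, so $\X^{r,t}$ satisfies the Euler--Lagrange equation for the Dirichlet energy on $\Gamma^r$ under the target constraint $\X^{r,t}(p)\in\Gamma^t$. Its multiplier is $|\nabla_{\Gamma^r}\X^{r,t}|^2\kappa^t$, and this factor is constant in space by Lemma~\ref{lemma:isometric} together with \eqref{rel_pt_rho_s_X}. Equivalently, \eqref{rel_pt_rho_s_X} shows that $\X^{r,t}$ has constant speed, which is the characterization of harmonic maps between closed curves.

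For the multiplier identity, take $r=0$, so that $\X^{0,t}=\X^t$. Comparing \eqref{eqn_Xst_b} with \eqref{pde-kappa-t0} gives
\begin{align*}
(\lambda^t\n^t)\circ\X^t=|\nabla_{\Gamma^0}\X^t|^2(\kappa^t\n^t)\circ\X^t.
\end{align*}
Dotting with the unit normal $\n^t\circ\X^t$ then gives $\lambda^t\circ\X^t=|\nabla_{\Gamma^0}\X^t|^2\kappa^t\circ\X^t$.

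For \eqref{eqn_Xst_a}, write $\X^t=\X^{r,t}\circ\X^r$ with $r$ fixed. Differentiating in $t$ gives $\partial_t\X^t=(\partial_t\X^{r,t})\circ\X^r$. Substituting this into \eqref{pde-X-t0} and composing with $(\X^r)^{-1}$ yields \eqref{eqn_Xst_a}.

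The main obstacle is justifying the interchange of $\partial_t$ with composition by the $t$-independent diffeomorphism $\X^r$. This is legitimate because $\X^r$ does not depend on $t$, given the assumed $C^1$ regularity in time. The sign convention for $\kappa^t$ must also match \eqref{eq:intro_mcf}.
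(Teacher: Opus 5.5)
Your proposal is correct and follows the paper's proof. As in the paper, you apply Lemma~\ref{lem_trans_lap} componentwise to $f=\id_{\Gamma^t}$, use $\Delta_{\Gamma^t}\id=-\kappa^t\n^t$, and set $r=0$ to compare with \eqref{pde-kappa-t0}. You also spell out the composition argument $\X^t=\X^{r,t}\circ\X^r$ for \eqref{eqn_Xst_a} and the normality (constant-speed) justification of harmonicity, which the paper leaves implicit. Note that only the metric factor $|\nabla_{\Gamma^r}\X^{r,t}|^2$ is constant in space, not the full multiplier $|\nabla_{\Gamma^r}\X^{r,t}|^2\kappa^t$.
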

\begin{proof}
Equation~\eqref{eqn_Xst_a} follows from \eqref{pde-X-t0}. For \eqref{eqn_Xst_b}, apply Lemma~\ref{lem_trans_lap} componentwise to $ f = \id_{\Gamma^t} $:
\begin{align}
\Delta_{\Gamma^r}\X^{r,t}=|\nabla_{\Gamma^r}\X^{r,t}|^2(\Delta_{\Gamma^t}\id_{\Gamma^t})\circ\X^{r,t}=-|\nabla_{\Gamma^r}\X^{r,t}|^2(\kappa^t\n^t)\circ\X^{r,t}.
\end{align}
The last equality uses \eqref{eq:intro_mcf}. Taking $r=0$ in \eqref{eqn_Xst_b} and comparing it with \eqref{pde-kappa-t0} gives the multiplier identity.
\end{proof}

\subsection{Energy stability for curve-shortening flow}
Let $0=t_0<t_1<\cdots<t_M=T$ have step sizes $\tau_m=t_{m+1}-t_m$, and set
\begin{align*}
    \Gamma^{m} & := \Gamma^{t_m}, \\
    \X^{m,t} & := \X^{t_m , t}.
\end{align*}
For curve-shortening flow, $\v^t\cdot\n^t=-\kappa^t$. Proposition~\ref{lem_mid_map} gives the following continuous system on the time-node curve $\Gamma^m$ for $t\in(t_m,t_{m+1}]$:
\begin{subequations}\label{eqn_Xtmt}
    \begin{align}
\partial_t \X^{m,t} \cdot (\n^t \circ \X^{m,t}) & = -\kappa^t \circ \X^{m,t} \quad \text{on}\quad \Gamma^{m}, \\
- \Delta_{\Gamma^{m}} \X^{m,t} & = |\nabla_{\Gamma^{m}} \X^{m,t}|^2 (\kappa^{t} \n^{t}) \circ \X^{m,t}\quad  \text{on} \quad \Gamma^{m},\\
\X^{m,t_m}&=\id_{\Gamma^m}.
\end{align}
\end{subequations}
Multiplying the normal-velocity equation by $|\nabla_{\Gamma^{m}}\X^{m,t}|^2$ places the same metric weight in both relations and gives the BGN-type continuous formulation
\begin{subequations}\label{PDE-MCF-BGN-High}
\begin{align}
|\nabla_{\Gamma^{m}} \X^{m,t}|^2\partial_t \X^{m , t} \cdot (\n^t \circ \X^{m , t}) & = -|\nabla_{\Gamma^{m}} \X^{m ,t}|^2\kappa^t \circ \X^{m , t} \quad \text{on}\quad \Gamma^{m} \\
- \Delta_{\Gamma^{m}} \X^{m ,t} & = |\nabla_{\Gamma^{m}} \X^{m,t}|^2 (\kappa^{t} \n^{t}) \circ \X^{m ,t}\quad  \text{on} \quad \Gamma^{m} .
\end{align}
\end{subequations}

The weak formulation of \eqref{PDE-MCF-BGN-High} is: find a flow map $\X^{m,t}:\Gamma^{m}\to\Gamma^{t}$ with $\X^{m,t_m}=\id_{\Gamma^m}$ and a function $\kappa^t:\Gamma^t\to\R$ for $t\in(t_m,t_{m+1}]$ such that
\begin{subequations}\label{MCF-BGN-High-weak}
\begin{align}
& \int_{\Gamma^m}|\nabla_{\Gamma^m} \X^{m,t}|^2\partial_t \X^{m,t} \cdot  (\n^t \circ \X^{m,t}) \varphi  \, \d \Gamma^m = - \int_{\Gamma^{m}} |\nabla_{\Gamma^{m}} \X^{m,t}|^2  (\kappa^t \circ \X^{m,t}) \varphi \, \d \Gamma^m,
\\
& \int_{\Gamma^m} \nabla_{\Gamma^m} \X^{m,t} : \nabla_{\Gamma^m} \w  \, \d \Gamma^m = \int_{\Gamma^m} |\nabla_{\Gamma^m} \X^{m,t} |^2  [(\kappa^t \n^t )\circ \X^{m,t}]\cdot \w \, \d \Gamma^m
,
\end{align}
\end{subequations}
for all test functions $\varphi \in L^{2}(\Gamma^m)$ and $\w \in H^1(\Gamma^m)^2$.

\medskip
\noindent {\bf Curve-shortening stage equations and common RK--PFEM update}
\smallskip

{\em Step 1:}
For a given $\x^m$, let $\Gamma_h^m=\Gamma_h[\x^m]$ and set $\u_h^{m,0}=\id_{\Gamma_h^m}$. At $t=t_m+c_i\tau_m$, the functions $\u_h^{m,i}$, $\kappa_h^{m,i}$, and $F_h^{m,i}$ approximate $\X^{m,t}$, $\kappa^t\circ\X^{m,t}$, and $\partial_t\X^{m,t}$, respectively. For each $K\in\mathcal T_h^m$, let $\n_h^{m,i}|_K$ be the oriented unit normal of the image edge $\u_h^{m,i}(K)$, pulled back to $K$ by the element correspondence. Solve for $\{\u_h^{m,i},\kappa_h^{m,i},F_h^{m,i}\}_{i=1}^s\subset S_h[\x^m]^2\times S_h[\x^m]\times S_h[\x^m]^2$:
\begin{subequations}\label{MCF-BGN-High-fully}
\begin{align}
\label{MCF-scheme-stage}
& \u_h^{m,i} = \u_h^{m,0} + \tau_m \sum_{j=1}^{s} a_{ij} F_h^{m,j}, \qquad i = 1,\cdots ,s,\\
\label{MCF-scheme-Xi}
& \Big(|\nabla_{\Gamma_h^m}\u_h^{m,i}|^2 F_h^{m,i}\cdot \n_h^{m,i}, \, \varphi_h \Big)_{\Gamma_h^{m}}^h  = - \Big( |\nabla_{\Gamma_h^m}\u_h^{m,i}|^2 \kappa_h^{m,i} \, ,\varphi_h \Big)_{\Gamma_h^{m}}^h, \quad i = 1,\cdots ,s,\\
\label{MCF-scheme-Hi}
& \Big(|\nabla_{\Gamma_h^m}\u_h^{m,i}|^2 \kappa_h^{m,i} \n_h^{m,i}, \w_h \Big)_{\Gamma_h^{m}}^h  = \Big(\nabla_{\Gamma_h^{m}}  \u_h^{m,i}, \, \nabla_{\Gamma_h^{m}} \w_h \Big)_{\Gamma_h^{m}}, \quad i = 1,\cdots ,s,
\end{align}
\end{subequations}
for all $\varphi_h \in S_h[\x^{m}]$ and $\w_h \in S_h[\x^{m}]^2$.

\smallskip

{\em Step 2:} Compute $ \u_h^{m+1} \in S_h [\x^m]^2 $ from
\begin{align}\label{rk-pfem-update}
\u_h^{m+1} = \u_h^{m,0} + \tau_m \sum_{i=1}^{s} b_i F_h^{m,i} .
\end{align}
\smallskip

{\em Step 3:} Set $\x^{m+1}=\u_h^{m+1}(\x^m)$ and $\Gamma_h^{m+1}:=\Gamma_h[\x^{m+1}]$. The stage relation \eqref{MCF-scheme-stage}, endpoint update \eqref{rk-pfem-update}, and mesh push-forward are common to all four flow-specific stage systems below.

All zero-order products in the stage equations are mass lumped, as in
the classical BGN discretization
\cite{barrett2007parametric,bao2021structure,zhao2021energy}, whereas
gradient terms use standard elementwise integration. Each fixed-point
iteration freezes the metric weights and intermediate normals, reducing
the nonlinear stage system to a sparse linear BGN system.

\begin{lemma}[Per-stage energy decay for curve-shortening flow]\label{lem:perstage-mcf}
For each stage $i = 1,\dots,s$, the solution of the nonlinear scheme \eqref{MCF-BGN-High-fully} satisfies
\begin{align}\label{MCF-perstage}
\big(\nabla_{\Gamma_h^{m}}\u_h^{m,i},\, \nabla_{\Gamma_h^{m}}F_h^{m,i}\big)_{\Gamma_h^{m}}
= - \Big(|\nabla_{\Gamma_h^m}\u_h^{m,i}|^2\kappa_h^{m,i},\,\kappa_h^{m,i}\Big)^h_{\Gamma_h^m} \le 0.
\end{align}
\end{lemma}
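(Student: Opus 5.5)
The plan is the classical BGN testing argument, applied stage by stage with the two stage equations \eqref{MCF-scheme-Xi} and \eqref{MCF-scheme-Hi} paired against each other. Fix a stage index $i$.

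\emph{Step 1.} Choose the vector test function $\w_h=F_h^{m,i}\in S_h[\x^m]^2$ in the curvature equation \eqref{MCF-scheme-Hi}. This is admissible because $F_h^{m,i}$ is one of the unknowns in $S_h[\x^m]^2$. It gives
\begin{align*}
\big(\nabla_{\Gamma_h^m}\u_h^{m,i},\nabla_{\Gamma_h^m}F_h^{m,i}\big)_{\Gamma_h^m}
=\Big(|\nabla_{\Gamma_h^m}\u_h^{m,i}|^2\kappa_h^{m,i}\n_h^{m,i},F_h^{m,i}\Big)^h_{\Gamma_h^m}.
\end{align*}

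\emph{Step 2.} Choose the scalar test function $\varphi_h=\kappa_h^{m,i}\in S_h[\x^m]$ in the normal-velocity equation \eqref{MCF-scheme-Xi}. This gives
\begin{align*}
\Big(|\nabla_{\Gamma_h^m}\u_h^{m,i}|^2F_h^{m,i}\cdot\n_h^{m,i},\kappa_h^{m,i}\Big)^h_{\Gamma_h^m}
=-\Big(|\nabla_{\Gamma_h^m}\u_h^{m,i}|^2\kappa_h^{m,i},\kappa_h^{m,i}\Big)^h_{\Gamma_h^m}.
\end{align*}

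\emph{Step 3.} Identify the right-hand side of Step~1 with the left-hand side of Step~2. Both are mass-lumped sums of the same vertex rule \eqref{mass-lumped-inner-product} over $K\in\mathcal T_h^m$. On each element the integrand is $|\nabla\u_h^{m,i}|^2|_K\,\kappa_h^{m,i}\,(\n_h^{m,i}|_K\cdot F_h^{m,i})$, evaluated at the vertices of $K$.

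The points to verify are the following. The weight $|\nabla_{\Gamma_h^m}\u_h^{m,i}|^2$ and the normal $\n_h^{m,i}$ are elementwise constant, since $\u_h^{m,i}$ is piecewise linear and $\n_h^{m,i}$ is defined per image edge. Because of this, placing elementwise-constant or discontinuous factors inside the lumped product is consistent with the definition for piecewise continuous functions. The products of scalars and vectors then rearrange pointwise at each vertex, so the two sums agree. Combining Steps~1--3 yields the identity in \eqref{MCF-perstage}.

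\emph{Step 4.} For the sign, expand the lumped product as
\[
\sum_K \frac{|K|}{2}\,|\nabla\u_h^{m,i}|_K|^2\sum_\ell \bigl(\kappa_h^{m,i}(p_{K,\ell})\bigr)^2.
\]
Every factor here is nonnegative, so the expression is $\ge0$, and the right-hand side of \eqref{MCF-perstage} is $\le0$.

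The main (mild) obstacle is Step~3: making sure that the lumped quadrature treats the elementwise-discontinuous weight and normal identically in both equations. This is exactly why both relations carry the same metric weight $|\nabla_{\Gamma_h^m}\u_h^{m,i}|^2$ and the same normal $\n_h^{m,i}$. I would check it explicitly on a single element and then sum over elements.
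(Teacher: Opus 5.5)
Your proposal is correct and matches the paper's proof: test \eqref{MCF-scheme-Xi} with $\varphi_h=\kappa_h^{m,i}$ and \eqref{MCF-scheme-Hi} with $\w_h=F_h^{m,i}$. Then identify the two mass-lumped terms using symmetry of the lumped product and $(\kappa_h^{m,i}\n_h^{m,i})\cdot F_h^{m,i}=\kappa_h^{m,i}(F_h^{m,i}\cdot\n_h^{m,i})$. Your elementwise check on the piecewise-constant weight and normal, and the explicit vertex expansion for the sign, only make explicit what the paper leaves implicit.
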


\begin{proof}
Choose $\varphi_h =  \kappa_h^{m,i}$ in \eqref{MCF-scheme-Xi} to obtain
\begin{align}\label{MCF-energy-p1}
\big(|\nabla_{\Gamma_h^m}\u_h^{m,i}|^2 F_h^{m,i}\cdot \n_h^{m,i}, \, \kappa_h^{m,i} \big)_{\Gamma_h^{m}}^h
= - \Big(|\nabla_{\Gamma_h^m}\u_h^{m,i}|^2\kappa_h^{m,i},\,\kappa_h^{m,i}\Big)^h_{\Gamma_h^m} \le 0.
\end{align}
Choosing $\w_h = F_h^{m,i}$ in \eqref{MCF-scheme-Hi} gives
\begin{align}\label{MCF-energy-p2}
\big(\nabla_{\Gamma_h^{m}} \u_h^{m,i}, \, \nabla_{\Gamma_h^{m}} F_h^{m,i}\big)_{\Gamma_h^{m}}
= \Big(|\nabla_{\Gamma_h^m}\u_h^{m,i}|^2 \kappa_h^{m,i} \n_h^{m,i}, \, F_h^{m,i} \Big)_{\Gamma_h^{m}}^h.
\end{align}
By symmetry of the mass-lumped inner product and
$(\kappa_h^{m,i}\n_h^{m,i})\cdot F_h^{m,i}
=\kappa_h^{m,i}(F_h^{m,i}\cdot\n_h^{m,i})$,
equations \eqref{MCF-energy-p1} and \eqref{MCF-energy-p2}
yield \eqref{MCF-perstage}.
\end{proof}

\begin{theorem}[Energy decay]\label{thm:mcf_energy_decay_curve}
Let the RK method be algebraically stable in the sense of \eqref{alg-stable}. Fix $m\in\{0,\ldots,M-1\}$ and $\tau_m>0$. Assume that \eqref{MCF-BGN-High-fully} admits an exact solution whose intermediate polygonal curves are nondegenerate. Then
\begin{align}
|\Gamma_h^{m+1}|\le |\Gamma_h^m|.
\end{align}
\end{theorem}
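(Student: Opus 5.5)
We apply Lemma~\ref{lem:B-stability} in the Hilbert space $V=S_h[\x^m]^2$ with the Dirichlet semi-inner product $\langle \u,\w\rangle:=(\nabla_{\Gamma_h^m}\u,\nabla_{\Gamma_h^m}\w)_{\Gamma_h^m}$. This form is only positive semidefinite: it vanishes on constants. However, the proof of Lemma~\ref{lem:B-stability} uses only bilinearity and symmetry, so the identity remains valid for the induced seminorm $\|\cdot\|$. We take $y^m=\u_h^{m,0}=\id_{\Gamma_h^m}$, $y^{m,i}=\u_h^{m,i}$, $f^{m,i}=F_h^{m,i}$ and $y^{m+1}=\u_h^{m+1}$. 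Then \eqref{MCF-scheme-stage} and \eqref{rk-pfem-update} are exactly the RK relations \eqref{RK-stage}--\eqref{RK-update}, and the identity \eqref{B-stability-identity} gives
\begin{align*}
\|\nabla_{\Gamma_h^m}\u_h^{m+1}\|^2_{L^2(\Gamma_h^m)}-\|\nabla_{\Gamma_h^m}\id\|^2_{L^2(\Gamma_h^m)}
=2\tau_m\sum_{i=1}^s b_i\big(\nabla_{\Gamma_h^m}\u_h^{m,i},\nabla_{\Gamma_h^m}F_h^{m,i}\big)_{\Gamma_h^m}
-\tau_m^2\sum_{i,j=1}^s\mathcal M_{ij}\langle F_h^{m,i},F_h^{m,j}\rangle .
\end{align*}
The first sum is nonpositive by Lemma~\ref{lem:perstage-mcf} together with $b_i\ge0$. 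The second sum is nonnegative because $\mathcal M\succeq0$ and $(\langle F_h^{m,i},F_h^{m,j}\rangle)_{i,j}$ is a Gram matrix of a semi-inner product, hence positive semidefinite. The Gram-matrix argument uses only symmetry and nonnegativity of the form, not definiteness. Therefore $\|\nabla_{\Gamma_h^m}\u_h^{m+1}\|^2\le\|\nabla_{\Gamma_h^m}\id\|^2$.

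Next we translate these Dirichlet energies into lengths. On the polygon $\Gamma_h^m$ we have $|\nabla_{\Gamma_h^m}\id|^2=1$ on each edge, since the tangential gradient of the identity is the tangential projection, of rank one. Hence $\|\nabla_{\Gamma_h^m}\id\|^2=|\Gamma_h^m|$. On each edge $K$, the function $\u_h^{m+1}$ is affine, and $|\nabla_{\Gamma_h^m}\u_h^{m+1}|=|\u_h^{m+1}(K)|/|K|$. Consequently,
\begin{align*}
\|\nabla_{\Gamma_h^m}\u_h^{m+1}\|^2=\sum_{K}\frac{|\u_h^{m+1}(K)|^2}{|K|}.
\end{align*}
By Step~3, $\u_h^{m+1}(K)$ is exactly the corresponding edge of $\Gamma_h^{m+1}$. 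The Cauchy--Schwarz inequality then gives
\begin{align*}
|\Gamma_h^{m+1}|^2=\Big(\sum_K\frac{|\u_h^{m+1}(K)|}{|K|^{1/2}}\,|K|^{1/2}\Big)^2
\le\sum_K\frac{|\u_h^{m+1}(K)|^2}{|K|}\sum_K|K|
\le|\Gamma_h^m|\cdot|\Gamma_h^m|.
\end{align*}
This yields $|\Gamma_h^{m+1}|\le|\Gamma_h^m|$.

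The step needing the most care is the first. We must justify using Lemma~\ref{lem:B-stability} with a seminorm, and we must check that the stage equations are well defined. The nondegeneracy hypothesis on the intermediate curves enters only here: it ensures that the normals $\n_h^{m,i}$ are defined, so that the stage system and Lemma~\ref{lem:perstage-mcf} make sense. It is not needed in the Cauchy--Schwarz step, which holds even if $\Gamma_h^{m+1}$ has degenerate edges.
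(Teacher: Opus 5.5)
Your proof is correct and follows the paper's: you apply the algebraic Runge--Kutta identity with the Dirichlet form on $\Gamma_h^m$, and use Lemma~\ref{lem:perstage-mcf}, $b_i\ge0$ and $\mathcal M\succeq0$ to get $\|\nabla_{\Gamma_h^m}\u_h^{m+1}\|^2_{L^2(\Gamma_h^m)}\le\|\nabla_{\Gamma_h^m}\id_{\Gamma_h^m}\|^2_{L^2(\Gamma_h^m)}=|\Gamma_h^m|$, and then convert to length. The differences are minor: you work with the seminorm on $S_h[\x^m]^2$ rather than the quotient $S_h[\x^m]^2/\R^2$ (equivalent, as you note), and you convert via a global Cauchy--Schwarz over edges, $|\Gamma_h^{m+1}|^2\le|\Gamma_h^m|\,\|\nabla_{\Gamma_h^m}\u_h^{m+1}\|^2_{L^2(\Gamma_h^m)}$, instead of the paper's pointwise bound $2x\le x^2+1$, so you obtain a slightly sharper geometric-mean estimate where the paper has an arithmetic-mean one.
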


\begin{proof}%
The gradient bilinear form is positive definite on $V=S_h[\x^m]^2/\R^2$. Apply Lemma~\ref{lem:B-stability} in this quotient Hilbert space with
$\langle\cdot,\cdot\rangle=(\nabla_{\Gamma_h^m}\cdot,\nabla_{\Gamma_h^m}\cdot)_{\Gamma_h^m}$,
with stage values $\u_h^{m,i}$ and stage velocities $F_h^{m,i}$. Lemma~\ref{lem:perstage-mcf} gives $\langle\u_h^{m,i},F_h^{m,i}\rangle\le0$, and hence
\begin{align}\label{MCF-Bstab}
&\frac{1}{2}\Big(\|\nabla_{\Gamma_h^m}\u_h^{m+1}\|^2_{L^2(\Gamma_h^m)}
-\|\nabla_{\Gamma_h^m}\u_h^{m,0}\|^2_{L^2(\Gamma_h^m)}\Big)\notag\\
&\quad=\tau_m\sum_{i=1}^{s} b_i
\big(\nabla_{\Gamma_h^{m}}\u_h^{m,i},\nabla_{\Gamma_h^{m}}F_h^{m,i}\big)_{\Gamma_h^{m}}\notag\\
&\qquad-\frac{\tau_m^2}{2}\sum_{i,j=1}^{s}\mathcal M_{ij}
\big(\nabla_{\Gamma_h^{m}}F_h^{m,i},\nabla_{\Gamma_h^{m}}F_h^{m,j}\big)_{\Gamma_h^{m}}\le0,
\end{align}
where the first sum is nonpositive by $b_i\ge0$ and \eqref{MCF-perstage}. The second sum is nonpositive after its displayed minus sign because $\mathcal M\succeq0$ and the matrix of gradient inner products is a Gram matrix.

Since $\u_h^{m,0}=\id_{\Gamma_h^m}$ and $|\nabla_{\Gamma_h^m}\id_{\Gamma_h^m}|=1$, applying $2x\le x^2+1$ to $x=|\nabla_{\Gamma_h^m}\u_h^{m+1}|\ge0$ gives
\begin{align} \label{energy-proof-4}
|\Gamma_h^{m+1}|-  |\Gamma_h^{m}|
& = \int_{\Gamma_h^m}\Big(| \nabla_{\Gamma_h^m}\u_h^{m+1} |-1\Big)\\
& \le \frac{1}{2} \int_{\Gamma_h^{m}} |\nabla_{\Gamma_h^{m}} \u_h^{m+1}|^2 - \frac{1}{2} \int_{\Gamma_h^{m}} |\nabla_{\Gamma_h^{m}} \id_{\Gamma_h^m}|^2\\
& = \frac{1}{2}\Big(\|\nabla_{\Gamma_h^m}\u_h^{m+1}\|^2_{L^2(\Gamma_h^m)} - \|\nabla_{\Gamma_h^m}\u_h^{m,0}\|^2_{L^2(\Gamma_h^m)}\Big) \le 0,
\end{align}
This proves the stated energy decay.
\end{proof}

\subsection{Energy stability for curve diffusion}
For curve diffusion, $\v^t\cdot\n^t=\Delta_{\Gamma^t}\kappa^t$. On each time slab, Proposition~\ref{lem_mid_map} supplies the metric-weighted curvature equation on $\Gamma^m$, while Lemma~\ref{lem_trans_lap} gives
\begin{align}\label{eqn_Dekat_Dem}
\Delta_{\Gamma^m}(\kappa^{t} \circ \X^{m,t}) = | \nabla_{\Gamma^m}\X^{m,t} |^2  (\Delta_{\Gamma^{t}} \kappa^{t}) \circ \X^{m,t}.
\end{align}
Combining this identity with the pulled-back normal-velocity equation yields the time-slab BGN pair
\begin{subequations}\label{eqn_Xtmt_sd_alt}
    \begin{align}
| \nabla_{\Gamma^m}\X^{m,t} |^2\partial_t \X^{m,t} \cdot (\n^t \circ \X^{m , t}) & = \Delta_{\Gamma^m}(\kappa^{t} \circ \X^{m,t}) \quad \text{on}\quad \Gamma^{m} \\
- \Delta_{\Gamma^{m}} \X^{m ,t} & = |\nabla_{\Gamma^{m}} \X^{m,t}|^2 (\kappa^{t} \n^{t}) \circ \X^{m ,t}\quad  \text{on} \quad \Gamma^{m} .
    \end{align}
\end{subequations}
The weak formulation of \eqref{eqn_Xtmt_sd_alt} is: find a flow map $\X^{m,t}: \Gamma^{m} \to  \Gamma^{t}$ with $\X^{m,t_m}=\id_{\Gamma^m}$ and a function $\kappa^t: \Gamma^t \to  \R$ for $ t \in (t_m, t_{m+1}] $ such that
\begin{subequations}\label{sd-BGN-High-weak}
\begin{align}
& \int_{\Gamma^m}|\nabla_{\Gamma^m} \X^{m,t}|^2\partial_t \X^{m,t} \cdot  (\n^t \circ \X^{m,t}) \varphi  \, \d \Gamma^m = - \int_{\Gamma^{m}} \nabla_{\Gamma^m}(\kappa^t \circ \X^{m,t})\cdot \nabla_{\Gamma^m} \varphi \, \d \Gamma^m,
\\
& \int_{\Gamma^m} \nabla_{\Gamma^m} \X^{m,t} : \nabla_{\Gamma^m} \w  \, \d \Gamma^m = \int_{\Gamma^m} |\nabla_{\Gamma^m} \X^{m,t} |^2  [(\kappa^t \n^t )\circ \X^{m,t}]\cdot \w \, \d \Gamma^m
,
\end{align}
\end{subequations}
for all test functions $\varphi \in H^{1}(\Gamma^m)$ and $\w \in H^1(\Gamma^m)^2$.

Let $S_h[\x^m]$ be the finite element space on the polygonal curve $\Gamma_h^m=\Gamma_h[\x^m]$ approximating $\Gamma^m$, and let $(c,A,b)$ be an $s$-stage algebraically stable RK method.

\medskip
\noindent {\bf Flow-specific stage equations}
\smallskip

Using the stage notation and elementwise pulled-back normals defined above, set $\u_h^{m,0}=\id_{\Gamma_h^m}$ and solve for $\{\u_h^{m,i},\kappa_h^{m,i},F_h^{m,i}\}_{i=1}^s\subset S_h[\x^m]^2\times S_h[\x^m]\times S_h[\x^m]^2$:
\begin{subequations}\label{sd-BGN-High-fully}
\begin{align}
& \u_h^{m,i} = \u_h^{m,0} + \tau_m \sum_{j=1}^{s} a_{ij} F_h^{m,j}, \qquad i = 1,\cdots ,s,\\
\label{sd-scheme-Xi}
& \Big(|\nabla_{\Gamma_h^m}\u_h^{m,i}|^2 F_h^{m,i}\cdot \n_h^{m,i}, \, \varphi_h \Big)_{\Gamma_h^{m}}^h  = - \Big( \nabla_{\Gamma_h^m}  \kappa_h^{m,i} \, ,\nabla_{\Gamma_h^m}\varphi_h \Big)_{\Gamma_h^{m}}, \quad i = 1,\cdots ,s,\\
\label{sd-scheme-Hi}
& \Big(|\nabla_{\Gamma_h^m}\u_h^{m,i}|^2   \kappa_h^{m,i} \n_h^{m,i}, \w_h \Big)_{\Gamma_h^{m}}^h  = \Big(\nabla_{\Gamma_h^{m}}  \u_h^{m,i}, \, \nabla_{\Gamma_h^{m}} \w_h \Big)_{\Gamma_h^{m}}, \quad i = 1,\cdots ,s,
\end{align}
\end{subequations}
for all test functions $\varphi_h \in S_h[\x^{m}]$ and $\w_h \in S_h[\x^{m}]^2$.

The time-node value and the next polygonal curve are then obtained from the common endpoint update \eqref{rk-pfem-update} and mesh push-forward.

\begin{lemma}[Per-stage energy decay for curve diffusion]\label{lem:perstage-sd}
For each stage $i = 1,\dots,s$, the solution of the nonlinear scheme \eqref{sd-BGN-High-fully} satisfies
\begin{align}\label{sd-perstage}
\big(\nabla_{\Gamma_h^{m}}\u_h^{m,i},\, \nabla_{\Gamma_h^{m}}F_h^{m,i}\big)_{\Gamma_h^{m}}
= - \int_{\Gamma_h^{m}} |\nabla_{\Gamma_h^{m}} \kappa_h^{m,i}|^2 \le 0.
\end{align}
\end{lemma}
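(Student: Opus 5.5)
The argument mirrors the proof of Lemma~\ref{lem:perstage-mcf}: I will test each of the two stage equations with the unknown of the other, so that the mixed normal--curvature terms cancel. Fix a stage $i$.

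First, choose $\varphi_h=\kappa_h^{m,i}\in S_h[\x^m]$ in \eqref{sd-scheme-Xi}. This gives
\begin{align*}
\Big(|\nabla_{\Gamma_h^m}\u_h^{m,i}|^2 F_h^{m,i}\cdot\n_h^{m,i},\,\kappa_h^{m,i}\Big)^h_{\Gamma_h^m}
=-\big(\nabla_{\Gamma_h^m}\kappa_h^{m,i},\nabla_{\Gamma_h^m}\kappa_h^{m,i}\big)_{\Gamma_h^m}
=-\int_{\Gamma_h^m}|\nabla_{\Gamma_h^m}\kappa_h^{m,i}|^2\le0 .
\end{align*}

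Next, choose $\w_h=F_h^{m,i}\in S_h[\x^m]^2$ in \eqref{sd-scheme-Hi}. This gives
\begin{align*}
\big(\nabla_{\Gamma_h^m}\u_h^{m,i},\nabla_{\Gamma_h^m}F_h^{m,i}\big)_{\Gamma_h^m}
=\Big(|\nabla_{\Gamma_h^m}\u_h^{m,i}|^2\kappa_h^{m,i}\n_h^{m,i},\,F_h^{m,i}\Big)^h_{\Gamma_h^m}.
\end{align*}

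It remains to check that the right-hand sides of the two tested equations agree. Both are built from the same vertex rule \eqref{mass-lumped-inner-product} on each $K\in\mathcal T_h^m$. The weight $|\nabla_{\Gamma_h^m}\u_h^{m,i}|^2$ and the normal $\n_h^{m,i}$ are elementwise constant, so they take the same value at both vertices of $K$. Pointwise at each vertex,
\[
(\kappa_h^{m,i}\n_h^{m,i})\cdot F_h^{m,i}=\kappa_h^{m,i}\,(F_h^{m,i}\cdot\n_h^{m,i}).
\]
Hence the two lumped sums coincide term by term, and combining the displays yields \eqref{sd-perstage}.

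I do not expect a genuine obstacle; the argument is essentially bookkeeping. The only point needing care is that the elementwise normal $\n_h^{m,i}$ may be discontinuous across vertices. This causes no trouble because the lumped product is evaluated element by element, using restrictions $|_K$, on both sides. Nondegeneracy of $\u_h^{m,i}(K)$ is needed only to make $\n_h^{m,i}$ well defined.
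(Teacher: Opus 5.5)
Your proposal is correct and matches the paper's proof: you test \eqref{sd-scheme-Xi} with $\varphi_h=\kappa_h^{m,i}$ and \eqref{sd-scheme-Hi} with $\w_h=F_h^{m,i}$, then identify the two mass-lumped products through the shared weight, the shared normal, and $(\kappa_h^{m,i}\n_h^{m,i})\cdot F_h^{m,i}=\kappa_h^{m,i}(F_h^{m,i}\cdot\n_h^{m,i})$. Your element-by-element check of the vertex rule, which handles the discontinuous normal, simply spells out the paper's ``by symmetry'' step.
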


\begin{proof}
Choose $\varphi_h =  \kappa_h^{m,i}$ in \eqref{sd-scheme-Xi}:
\begin{align}\label{sd-energy-p1}
\big(|\nabla_{\Gamma_h^m}\u_h^{m,i}|^2 F_h^{m,i}\cdot \n_h^{m,i}, \, \kappa_h^{m,i} \big)_{\Gamma_h^{m}}^h
= - \int_{\Gamma_h^{m}}  |\nabla_{\Gamma_h^{m}} \kappa_h^{m,i}|^2 \le 0.
\end{align}
Taking $\w_h = F_h^{m,i}$ in \eqref{sd-scheme-Hi} gives
\begin{align}\label{sd-energy-p2}
\big(\nabla_{\Gamma_h^{m}} \u_h^{m,i}, \, \nabla_{\Gamma_h^{m}} F_h^{m,i}\big)_{\Gamma_h^{m}}
= \Big(|\nabla_{\Gamma_h^m}\u_h^{m,i}|^2   \kappa_h^{m,i} \n_h^{m,i}, \, F_h^{m,i} \Big)_{\Gamma_h^{m}}^h.
\end{align}
The two mass-lumped products coincide by symmetry and $(\kappa_h^{m,i}\n_h^{m,i})\cdot F_h^{m,i}=\kappa_h^{m,i}(F_h^{m,i}\cdot\n_h^{m,i})$, with the same weight $|\nabla_{\Gamma_h^m}\u_h^{m,i}|^2$ and normal $\n_h^{m,i}$ in both stage equations. Hence \eqref{sd-perstage} follows.
\end{proof}

\begin{theorem}[Energy decay]\label{thm:sd_energy_decay_curve}
Let the RK method be algebraically stable as in Theorem~\ref{thm:mcf_energy_decay_curve}. Fix $m\in\{0,\ldots,M-1\}$ and $\tau_m>0$. Assume that \eqref{sd-BGN-High-fully} admits an exact solution whose intermediate polygonal curves are nondegenerate. Then
\begin{align}
|\Gamma_h^{m+1}|\le|\Gamma_h^m|.
\end{align}
\end{theorem}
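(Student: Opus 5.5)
The plan is to repeat the proof of Theorem~\ref{thm:mcf_energy_decay_curve}, with Lemma~\ref{lem:perstage-sd} taking the place of Lemma~\ref{lem:perstage-mcf}. The stage relation and the endpoint update \eqref{rk-pfem-update} are shared by both schemes. Hence the stage values $\u_h^{m,i}$ and slopes $F_h^{m,i}$ of \eqref{sd-BGN-High-fully} are exactly RK stage values and slopes in the sense of \eqref{RK-stage}--\eqref{RK-update}, posed in the finite-dimensional space $S_h[\x^m]^2$.

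First, equip $V=S_h[\x^m]^2/\R^2$ with the inner product $\langle\cdot,\cdot\rangle=(\nabla_{\Gamma_h^m}\cdot,\nabla_{\Gamma_h^m}\cdot)_{\Gamma_h^m}$. This form is positive definite on the quotient, because $\Gamma_h^m$ is a connected closed polygon and only constants have vanishing gradient. The RK relations are linear and constants are annihilated by the gradient, so they pass to the quotient, and Lemma~\ref{lem:B-stability} applies. It gives $\tfrac12(\|\nabla_{\Gamma_h^m}\u_h^{m+1}\|^2-\|\nabla_{\Gamma_h^m}\u_h^{m,0}\|^2)$ as the sum of two terms. The first is $\tau_m\sum_i b_i\langle \u_h^{m,i},F_h^{m,i}\rangle$. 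The second is $-\tfrac{\tau_m^2}{2}\sum_{i,j}\mathcal M_{ij}\langle F_h^{m,i},F_h^{m,j}\rangle$.

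Second, Lemma~\ref{lem:perstage-sd} gives $\langle\u_h^{m,i},F_h^{m,i}\rangle=-\int_{\Gamma_h^m}|\nabla_{\Gamma_h^m}\kappa_h^{m,i}|^2\le0$. Together with $b_i\ge0$, this makes the first term nonpositive. The second term is nonpositive because $\mathcal M\succeq0$ and the matrix $(\langle F_h^{m,i},F_h^{m,j}\rangle)$ is a Gram matrix, so the trace of their product is nonnegative. The nondegeneracy assumption enters only here: it guarantees that the normals $\n_h^{m,i}$ are well defined, so the stage equations and the per-stage identity make sense.

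Third, convert the Dirichlet-energy decrease into length decay exactly as in \eqref{energy-proof-4}. Since $\u_h^{m+1}$ is piecewise linear on $\Gamma_h^m$, we have $|\Gamma_h^{m+1}|=\int_{\Gamma_h^m}|\nabla_{\Gamma_h^m}\u_h^{m+1}|$. Using $|\nabla_{\Gamma_h^m}\id|=1$ and $x\le\tfrac12(x^2+1)$ gives $|\Gamma_h^{m+1}|-|\Gamma_h^m|\le\tfrac12(\|\nabla\u_h^{m+1}\|^2-\|\nabla\id\|^2)\le0$.

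There is no real obstacle here. The only point needing care is the identification $|\Gamma_h^{m+1}|=\int_{\Gamma_h^m}|\nabla_{\Gamma_h^m}\u_h^{m+1}|$. It holds because $\x^{m+1}=\u_h^{m+1}(\x^m)$, so each edge of $\Gamma_h^m$ is mapped affinely onto the corresponding edge of $\Gamma_h^{m+1}$. The possibility that $\Gamma_h^{m+1}$ degenerates does no harm, since the inequality only becomes easier in that case.
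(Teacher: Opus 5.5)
Your proposal is correct and follows essentially the same route as the paper. You apply the algebraic Runge--Kutta identity on $S_h[\x^m]^2/\R^2$ with the gradient inner product, obtain the sign from Lemma~\ref{lem:perstage-sd} together with $b_i\ge0$ and $\mathcal M\succeq0$, and conclude with the length inequality \eqref{energy-proof-4}; your extra justifications (positive definiteness on the quotient, the trace argument for the Gram term, and the edgewise identification of $|\Gamma_h^{m+1}|$) supply details that the paper leaves implicit.
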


\begin{proof}%
Applying \eqref{B-stability-identity} on $S_h[\x^m]^2/\R^2$ with the gradient inner product and using Lemma~\ref{lem:perstage-sd} gives
\begin{align}\label{sd-Bstab}
&\frac{1}{2}\Big(\|\nabla_{\Gamma_h^m}\u_h^{m+1}\|^2_{L^2(\Gamma_h^m)}
-\|\nabla_{\Gamma_h^m}\u_h^{m,0}\|^2_{L^2(\Gamma_h^m)}\Big)\notag\\
&\quad=\tau_m\sum_{i=1}^{s}b_i
(\nabla_{\Gamma_h^{m}}\u_h^{m,i},\nabla_{\Gamma_h^{m}}F_h^{m,i})_{\Gamma_h^{m}}\notag\\
&\qquad-\frac{\tau_m^2}{2}\sum_{i,j=1}^{s}\mathcal M_{ij}
(\nabla_{\Gamma_h^{m}}F_h^{m,i},\nabla_{\Gamma_h^{m}}F_h^{m,j})_{\Gamma_h^{m}}\le0,
\end{align}
The geometric inequality \eqref{energy-proof-4} gives the stated energy decay.
\end{proof}

\section{RK-based PFEMs for surface flows}\label{Se:4}
Throughout this section, $\Gamma^t\subset\R^3$ is a closed genus-$0$ surface. The harmonic map into each intermediate-time surface is written on the current time-node surface $\Gamma^m$. Conformality then produces the weighted mean-curvature relation discretized by the Runge--Kutta stages.

\subsection{Time-slab conformal formulation for surfaces}
For a harmonic map between closed genus-$0$ surfaces, the Hopf differential is holomorphic and therefore vanishes. Hence an orientation-preserving harmonic diffeomorphism is conformal \cite{heleinWood2008harmonic}. We use this conformality to express the continuous harmonic-map multiplier through mean curvature on every time slab.

Let $\Gamma^t$ move with velocity $\v^t$, let $0=t_0<t_1<\cdots<t_M=T$, set $\Gamma^m:=\Gamma^{t_m}$, and define $I_m:=(t_m,t_{m+1}]$. On $I_m$, let $\X^{m,t}:\Gamma^m\to\Gamma^t$ be an orientation-preserving harmonic diffeomorphism that is continuously differentiable in time and twice continuously differentiable in space. Then

\begin{subequations}\label{pde-kappa-t0_surf}
	\begin{align}
		\partial_t \X^{m , t} \cdot (\n^t \circ \X^{m , t}) & =  (\v^t \cdot \n^t) \circ \X^{m , t} \quad \text{on}\quad \Gamma^{m} \\
		- \Delta_{\Gamma^{m}} \X^{m,t} & = (\lambda^{m,t} \n^t) \circ \X^{m,t} \quad  \text{on} \quad \Gamma^{m},\label{pde-kappa-t0_surf_b}\\
        \X^{m,t_m}&=\id_{\Gamma^m},
	\end{align}
\end{subequations}
where $\lambda^{m,t}:\Gamma^t\to\R$ is the time-slab Lagrange multiplier and $\n^t$ is the unit outward normal to $\Gamma^t$. At $t=t_m+c_i\tau_m$, the target of $\X^{m,t}$ is the $i$th intermediate-time surface, while both equations remain posed on $\Gamma^m$.

Let $g_m$ and $g_t$ denote the induced metrics on $\Gamma^m$ and $\Gamma^t$. Conformality gives
\begin{align}\label{surface-conformal-factor}
(\X^{m,t})^*g_t
=\frac12|\nabla_{\Gamma^m}\X^{m,t}|^2g_m.
\end{align}
For every smooth scalar function $f$ on $\Gamma^t$, the two-dimensional conformal transformation law gives
\begin{align}\label{surface-laplacian-pullback}
\Delta_{\Gamma^m}(f\circ\X^{m,t})
=\frac12|\nabla_{\Gamma^m}\X^{m,t}|^2
(\Delta_{\Gamma^t}f)\circ\X^{m,t}.
\end{align}
\begin{proposition}\label{surf_X_harm}
Let $\Gamma^m$ and $\Gamma^t$ be closed genus-$0$ surfaces, and let $\X^{m,t}:\Gamma^m\to\Gamma^t$ be the orientation-preserving harmonic diffeomorphism specified above. Then
\begin{align}\label{pde-mcf-H-t0-surf}
-\Delta_{\Gamma^m}\X^{m,t}
=\frac12|\nabla_{\Gamma^m}\X^{m,t}|^2
  (H^t\n^t)\circ\X^{m,t}
\qquad\text{on }\Gamma^m,
\end{align}
where $\Delta_{\Gamma^t}\id_{\Gamma^t}=-H^t\n^t$. In particular, the multiplier in \eqref{pde-kappa-t0_surf_b} is
\begin{align}\label{eq:surface-multiplier}
\lambda^{m,t}
=\frac12\Bigl(|\nabla_{\Gamma^m}\X^{m,t}|^2
  \circ(\X^{m,t})^{-1}\Bigr)H^t
\qquad\text{on }\Gamma^t.
\end{align}
\end{proposition}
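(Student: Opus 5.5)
The plan is to mirror the proof of Proposition~\ref{lem_mid_map}. We apply the conformal Laplacian transformation law \eqref{surface-laplacian-pullback} componentwise to the coordinate function $f=\id_{\Gamma^t}$, and then read off the multiplier.

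\emph{Step 1.} We take $f$ to be each coordinate function $x_k|_{\Gamma^t}$, $k=1,2,3$. Then $f\circ\X^{m,t}$ is the $k$th component of $\X^{m,t}$. Applying \eqref{surface-laplacian-pullback} componentwise gives
\begin{align*}
\Delta_{\Gamma^m}\X^{m,t}=\frac12|\nabla_{\Gamma^m}\X^{m,t}|^2(\Delta_{\Gamma^t}\id_{\Gamma^t})\circ\X^{m,t}.
\end{align*}
Substituting $\Delta_{\Gamma^t}\id_{\Gamma^t}=-H^t\n^t$ from \eqref{eq:intro_mcf} then yields \eqref{pde-mcf-H-t0-surf}.

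\emph{Step 2.} We compare \eqref{pde-mcf-H-t0-surf} with \eqref{pde-kappa-t0_surf_b}. This gives $(\lambda^{m,t}\n^t)\circ\X^{m,t}=\frac12|\nabla_{\Gamma^m}\X^{m,t}|^2(H^t\n^t)\circ\X^{m,t}$. We take the inner product with the unit vector $\n^t\circ\X^{m,t}$ to get a scalar identity on $\Gamma^m$. Because $\X^{m,t}$ is a diffeomorphism, we may compose with $(\X^{m,t})^{-1}$, and this gives \eqref{eq:surface-multiplier} on $\Gamma^t$.

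\emph{Main obstacle.} The substantive input is \eqref{surface-laplacian-pullback}, and I would justify it briefly rather than take it for granted. There are two ingredients. First, the vanishing Hopf differential makes $\X^{m,t}$ conformal, so $(\X^{m,t})^*g_t=e^{2u}g_m$. Second, we need to identify the factor $e^{2u}$. In local coordinates, $|\nabla_{\Gamma^m}\X^{m,t}|^2=g_m^{ij}(\X^{m,t})^*g_{t,ij}=2e^{2u}$, since the surface is two-dimensional; this is exactly \eqref{surface-conformal-factor}. In dimension two, the Laplace--Beltrami operator of the conformal metric $e^{2u}g_m$ is $e^{-2u}\Delta_{g_m}$, because the factor $\sqrt{\det g}\,g^{ij}$ is conformally invariant. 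Naturality of the Laplacian under the isometry $\X^{m,t}:(\Gamma^m,(\X^{m,t})^*g_t)\to(\Gamma^t,g_t)$ then gives $(\Delta_{\Gamma^t}f)\circ\X^{m,t}=e^{-2u}\Delta_{\Gamma^m}(f\circ\X^{m,t})$, which is \eqref{surface-laplacian-pullback}.

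One point needs care. The formula is applied to the extrinsic vector field $\id$, and it holds componentwise because each coordinate is a smooth scalar function. The orientation-preserving assumption ensures that the outward normals correspond, although Step 2 does not actually need this.
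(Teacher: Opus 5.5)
Your proof is correct and follows the paper's argument: apply the conformal law \eqref{surface-laplacian-pullback} componentwise to $f=\id_{\Gamma^t}$, substitute $\Delta_{\Gamma^t}\id_{\Gamma^t}=-H^t\n^t$, and compare with \eqref{pde-kappa-t0_surf_b} to read off $\lambda^{m,t}$. The paper simply cites \eqref{surface-laplacian-pullback} as the two-dimensional conformal transformation law, and your derivation of it is sound: $(\X^{m,t})^*g_t=e^{2u}g_m$, the trace gives $|\nabla_{\Gamma^m}\X^{m,t}|^2=2e^{2u}$, $\sqrt{\det g}\,g^{ij}$ is conformally invariant in dimension two, and the Laplacian is natural under isometries.
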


\begin{proof}
Apply \eqref{surface-laplacian-pullback} componentwise to
$f=\id_{\Gamma^t}$ and use
$\Delta_{\Gamma^t}\id_{\Gamma^t}=-H^t\n^t$ to obtain
\begin{align*}
\Delta_{\Gamma^m}\X^{m,t}
&=\frac12|\nabla_{\Gamma^m}\X^{m,t}|^2
  (\Delta_{\Gamma^t}\id_{\Gamma^t})\circ\X^{m,t}\\
&=-\frac12|\nabla_{\Gamma^m}\X^{m,t}|^2
  (H^t\n^t)\circ\X^{m,t}.
\end{align*}
This proves \eqref{pde-mcf-H-t0-surf}, and comparison with
\eqref{pde-kappa-t0_surf_b} gives \eqref{eq:surface-multiplier}.
\end{proof}

\subsection{Energy stability for mean curvature flow}

For mean curvature flow, the surface has normal velocity $-H^t\n^t$. Proposition~\ref{surf_X_harm} gives the following system on $\Gamma^m$ for $t\in I_m$:
\begin{subequations}\label{eqn_mcf_surf_harm}
\begin{align}
    \partial_t \X^{m,t} \cdot (\n^{t}\circ \X^{m,t})  & =  -H^{t}\circ \X^{m,t}\quad  \mbox{on} \quad \Gamma^m,\\
- \Delta_{\Gamma^m} \X^{m,t} & = \frac{1}{2}|\nabla_{\Gamma^{m}} \X^{m,t}|^2 (H^{t} \n^{t}) \circ \X^{m,t}\quad  \mbox{on} \quad \Gamma^m .
\end{align} 
\end{subequations}
Multiplying the normal-velocity equation by $|\nabla_{\Gamma^m}\X^{m,t}|^2$ produces the weighted formulation used for the energy estimate:
\begin{subequations}\label{PDE-MCF-BGN-High_surf}
\begin{align}
		|\nabla_{\Gamma^{m}} \X^{m,t}|^2\partial_t \X^{m , t} \cdot (\n^t \circ \X^{m , t}) & = -|\nabla_{\Gamma^{m}} \X^{m ,t}|^2 H^t \circ \X^{m , t} \quad \text{on}\quad \Gamma^{m} \\
- \Delta_{\Gamma^{m}} \X^{m ,t} & = \frac{1}{2} |\nabla_{\Gamma^{m}} \X^{m,t}|^2 (H^{t} \n^{t}) \circ \X^{m ,t}\quad  \text{on} \quad \Gamma^{m} .
\end{align}
\end{subequations}

The weak formulation is: find a flow map $\X^{m,t}:\Gamma^m\to\Gamma^t$ and a function $H^t:\Gamma^t\to\R$ for $t\in(t_m,t_{m+1}]$ such that
\begin{subequations}\label{MCF-BGN-High-weak_surf}
\begin{align} 
& \int_{\Gamma^m}|\nabla_{\Gamma^m} \X^{m,t}|^2\partial_t \X^{m,t} \cdot  (\n^t \circ \X^{m,t}) \varphi  \, \d \Gamma^m = - \int_{\Gamma^{m}} |\nabla_{\Gamma^{m}} \X^{m,t}|^2  (H^t \circ \X^{m,t}) \varphi \, \d \Gamma^m,
\\
& \int_{\Gamma^m} \nabla_{\Gamma^m} \X^{m,t} : \nabla_{\Gamma^m} \w  \, \d \Gamma^m = \frac{1}{2} \int_{\Gamma^m} |\nabla_{\Gamma^m} \X^{m,t} |^2  [(H^t \n^t )\circ \X^{m,t}]\cdot \w \, \d \Gamma^m 
,
\end{align}
\end{subequations}
for all test functions $\varphi \in L^{2}(\Gamma^m)$ and $\w \in H^1(\Gamma^m)^3$.

Let $(c,A,b)$ be an $s$-stage algebraically stable RK method, and let $S_h[\x^m]$ be the finite element space on the polygonal surface $\Gamma_h^m=\Gamma_h[\x^m]$ approximating $\Gamma^m$. Runge--Kutta time discretization together with the spatial finite element discretization and BGN mass lumping gives the following stage system.

\medskip
\noindent {\bf Flow-specific stage equations}
\smallskip

Set $\u_h^{m,0}=\id_{\Gamma_h^m}$. At $t=t_m+c_i\tau_m$, the functions $\u_h^{m,i}$, $H_h^{m,i}$, and $F_h^{m,i}$ approximate $\X^{m,t}$, $H^t\circ\X^{m,t}$, and $\partial_t\X^{m,t}$, respectively. For each $K\in\mathcal T_h^m$, let $\n_h^{m,i}|_K$ be the outward unit normal of the image triangle $\u_h^{m,i}(K)$, pulled back to $K$ by the element correspondence. Solve for $\{\u_h^{m,i},H_h^{m,i},F_h^{m,i}\}_{i=1}^s\subset S_h[\x^m]^3\times S_h[\x^m]\times S_h[\x^m]^3$:
\begin{subequations}\label{MCF-BGN-High-fully_surf}
\begin{align}
& \u_h^{m,i} = \u_h^{m,0} + \tau_m \sum_{j=1}^{s} a_{ij} F_h^{m,j}, \qquad i = 1,\cdots ,s,\\
\label{MCF-scheme-Xi_surf}
& \Big(|\nabla_{\Gamma_h^m}\u_h^{m,i}|^2 F_h^{m,i}\cdot \n_h^{m,i}, \, \varphi_h \Big)_{\Gamma_h^{m}}^h  = - \Big(|\nabla_{\Gamma_h^m}\u_h^{m,i}|^2   H_h^{m,i} \, ,\varphi_h  \Big)_{\Gamma_h^{m}}^h, \quad i = 1,\cdots ,s,\\
\label{MCF-scheme-Hi_surf}
& \frac{1}{2}\Big(|\nabla_{\Gamma_h^m}\u_h^{m,i}|^2   H_h^{m,i} \n_h^{m,i}, \w_h \Big)_{\Gamma_h^{m}}^h  =  \Big(\nabla_{\Gamma_h^{m}}  \u_h^{m,i}, \, \nabla_{\Gamma_h^{m}} \w_h \Big)_{\Gamma_h^{m}}, \quad i = 1,\cdots ,s,
\end{align}
\end{subequations}
for all test functions $\varphi_h \in S_h[\x^{m}]$ and $\w_h \in S_h[\x^{m}]^3$.

The time-node value and the next triangulated surface are then obtained from the common endpoint update \eqref{rk-pfem-update} and mesh push-forward.

\begin{lemma}[Per-stage energy decay for mean curvature flow of surfaces]\label{lem:perstage-surf}
For each stage $i=1,\dots,s$, any solution of the nonlinear scheme \eqref{MCF-BGN-High-fully_surf} satisfies
\begin{align}\label{MCF-perstage-surf}
\big(\nabla_{\Gamma_h^{m}}\u_h^{m,i},\, \nabla_{\Gamma_h^{m}}F_h^{m,i}\big)_{\Gamma_h^{m}}
= - \frac{1}{2}\Big(|\nabla_{\Gamma_h^m}\u_h^{m,i}|^2H_h^{m,i},\,H_h^{m,i}\Big)^h_{\Gamma_h^m} \le 0.
\end{align}
\end{lemma}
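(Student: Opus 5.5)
The argument follows the proof of Lemma~\ref{lem:perstage-mcf}, with the factor $\frac12$ from the conformal weight tracked through. We use two tests: one in the normal-velocity equation \eqref{MCF-scheme-Xi_surf} and one in the weighted curvature equation \eqref{MCF-scheme-Hi_surf}. The mass-lumped terms they produce coincide, so the gradient pairing $(\nabla_{\Gamma_h^m}\u_h^{m,i},\nabla_{\Gamma_h^m}F_h^{m,i})_{\Gamma_h^m}$ is expressed as a weighted square of $H_h^{m,i}$.

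\emph{Velocity test.} Take $\varphi_h=H_h^{m,i}\in S_h[\x^m]$ in \eqref{MCF-scheme-Xi_surf}. This gives
\[
\Big(|\nabla_{\Gamma_h^m}\u_h^{m,i}|^2F_h^{m,i}\cdot\n_h^{m,i},\,H_h^{m,i}\Big)^h_{\Gamma_h^m}=-\Big(|\nabla_{\Gamma_h^m}\u_h^{m,i}|^2H_h^{m,i},\,H_h^{m,i}\Big)^h_{\Gamma_h^m}.
\]
The right-hand side is nonpositive. By \eqref{mass-lumped-inner-product}, it is a sum over elements of $|K|/3$ times vertex values of $|\nabla_{\Gamma_h^m}\u_h^{m,i}|^2|_K\,(H_h^{m,i})^2$. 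Every such term is nonnegative, since the weight is elementwise constant and nonnegative.

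\emph{Curvature test.} Take $\w_h=F_h^{m,i}\in S_h[\x^m]^3$ in \eqref{MCF-scheme-Hi_surf}. This gives
\[
\big(\nabla_{\Gamma_h^m}\u_h^{m,i},\nabla_{\Gamma_h^m}F_h^{m,i}\big)_{\Gamma_h^m}=\tfrac12\Big(|\nabla_{\Gamma_h^m}\u_h^{m,i}|^2H_h^{m,i}\n_h^{m,i},\,F_h^{m,i}\Big)^h_{\Gamma_h^m}.
\]

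\emph{Identifying the two lumped terms.} At each vertex of each $K$, the integrands satisfy $(H_h^{m,i}\n_h^{m,i})\cdot F_h^{m,i}=H_h^{m,i}(F_h^{m,i}\cdot\n_h^{m,i})$. Both tests use the same elementwise normal $\n_h^{m,i}|_K$ and the same weight $|\nabla_{\Gamma_h^m}\u_h^{m,i}|^2|_K$. The two lumped sums are therefore equal term by term. Substituting the velocity-test identity into the right-hand side of the curvature-test identity gives
\[
\big(\nabla_{\Gamma_h^m}\u_h^{m,i},\nabla_{\Gamma_h^m}F_h^{m,i}\big)_{\Gamma_h^m}=-\tfrac12\Big(|\nabla_{\Gamma_h^m}\u_h^{m,i}|^2H_h^{m,i},H_h^{m,i}\Big)^h_{\Gamma_h^m}\le0,
\]
which is \eqref{MCF-perstage-surf}.

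The only point needing care is the consistency of the discrete objects between the two equations. The pulled-back normal $\n_h^{m,i}$ is only piecewise constant, and the lumped product is evaluated elementwise at vertices, not with a global nodal normal. So the identification of the two lumped sums must be made element by element using the vertex rule \eqref{mass-lumped-inner-product}. Once both equations are read through the same $K$-wise quadrature, this is immediate. No nondegeneracy is needed beyond $\n_h^{m,i}$ being defined on each element, which holds whenever a stage solution exists.
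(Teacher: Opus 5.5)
Your proof is correct and follows the paper's argument: test \eqref{MCF-scheme-Xi_surf} with $\varphi_h=H_h^{m,i}$ and \eqref{MCF-scheme-Hi_surf} with $\w_h=F_h^{m,i}$, then identify the two mass-lumped terms through the shared elementwise weight and normal to get the factor $\frac12$. Your elementwise check of the nonnegativity and of the term-by-term identification under the vertex rule \eqref{mass-lumped-inner-product} spells out what the paper states briefly.
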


\begin{proof}
Test \eqref{MCF-scheme-Xi_surf} with $\varphi_h=H_h^{m,i}$ to obtain
\begin{align}\label{MCF-energy-p1_surf}
\big(|\nabla_{\Gamma_h^m}\u_h^{m,i}|^2 F_h^{m,i}\cdot \n_h^{m,i}, \, H_h^{m,i} \big)_{\Gamma_h^{m}}^h
= - \Big(|\nabla_{\Gamma_h^m}\u_h^{m,i}|^2H_h^{m,i},\,H_h^{m,i}\Big)^h_{\Gamma_h^m} \le 0.
\end{align}
Test \eqref{MCF-scheme-Hi_surf} with $\w_h=F_h^{m,i}$ to obtain
\begin{align}\label{MCF-energy-p2_surf}
\big(\nabla_{\Gamma_h^{m}} \u_h^{m,i}, \, \nabla_{\Gamma_h^{m}} F_h^{m,i}\big)_{\Gamma_h^{m}}
= \frac{1}{2}\Big(|\nabla_{\Gamma_h^m}\u_h^{m,i}|^2   H_h^{m,i} \n_h^{m,i}, \, F_h^{m,i} \Big)_{\Gamma_h^{m}}^h.
\end{align}
The mass-lumped product is symmetric, and $(H_h^{m,i}\n_h^{m,i})\cdot F_h^{m,i}=H_h^{m,i}(F_h^{m,i}\cdot\n_h^{m,i})$. Because the same weight and normal occur in both stage equations, the right-hand side of \eqref{MCF-energy-p2_surf} is one half of the left-hand side of \eqref{MCF-energy-p1_surf}. This proves \eqref{MCF-perstage-surf}. \end{proof}

\begin{theorem}[Energy decay]\label{thm:mcf_energy_decay_surf}
Let the RK method be algebraically stable as in Theorem~\ref{thm:mcf_energy_decay_curve}. Fix $m\in\{0,\ldots,M-1\}$ and $\tau_m>0$. Assume that \eqref{MCF-BGN-High-fully_surf} admits an exact solution whose intermediate triangulated surfaces are nondegenerate. Then
\begin{align}
|\Gamma_h^{m+1}|\le|\Gamma_h^m|.
\end{align}
\end{theorem}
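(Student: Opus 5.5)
We follow the proof of Theorem~\ref{thm:mcf_energy_decay_curve}, with the scalar inequality $2x\le x^2+1$ replaced by an elementwise singular-value inequality suited to two-dimensional simplices. The argument has two parts: a Dirichlet-energy decay obtained from the Runge--Kutta structure, and a geometric comparison between surface area and Dirichlet energy.

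\emph{Dirichlet-energy decay.} The gradient bilinear form $(\nabla_{\Gamma_h^m}\cdot,\nabla_{\Gamma_h^m}\cdot)_{\Gamma_h^m}$ is positive definite on the quotient $V=S_h[\x^m]^3/\R^3$, because $\Gamma_h^m$ is connected. We apply Lemma~\ref{lem:B-stability} in $V$ with stage values $\u_h^{m,i}$ and stage slopes $F_h^{m,i}$. The stage relation and the update \eqref{rk-pfem-update} are exactly of the form \eqref{RK-stage}--\eqref{RK-update}. Lemma~\ref{lem:perstage-surf} gives $\langle \u_h^{m,i},F_h^{m,i}\rangle\le 0$, and $b_i\ge0$ makes the first sum in \eqref{B-stability-identity} nonpositive. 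The term with $\mathcal M\succeq0$ is subtracted, and $(\langle F_h^{m,i},F_h^{m,j}\rangle)_{i,j}$ is a Gram matrix. Hence
\[
\tfrac12\|\nabla_{\Gamma_h^m}\u_h^{m+1}\|^2_{L^2(\Gamma_h^m)}\le \tfrac12\|\nabla_{\Gamma_h^m}\id_{\Gamma_h^m}\|^2_{L^2(\Gamma_h^m)} .
\]
The nondegeneracy hypothesis is what makes the normals $\n_h^{m,i}$, and therefore the stage system and Lemma~\ref{lem:perstage-surf}, well defined.

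\emph{Geometric comparison.} This is the step that differs from the curve case, and it is the main point to check. On each triangle $K\in\mathcal T_h^m$, the map $\u_h^{m+1}|_K$ is affine. Its tangential gradient, restricted to the plane of $K$, is a linear map $T_K$ into $\R^3$ with singular values $\sigma_1,\sigma_2\ge0$. Then
\[
|\u_h^{m+1}(K)|=\sigma_1\sigma_2|K|,\qquad |\nabla_{\Gamma_h^m}\u_h^{m+1}|^2=\sigma_1^2+\sigma_2^2 \quad\text{on }K.
\]
By AM--GM, $\sigma_1\sigma_2\le\frac12(\sigma_1^2+\sigma_2^2)$. Summing over $K$, and noting that $\Gamma_h^{m+1}$ is the image of $\Gamma_h^m$ under $\u_h^{m+1}$ (Step~3), gives
\[
|\Gamma_h^{m+1}|\le\tfrac12\int_{\Gamma_h^m}|\nabla_{\Gamma_h^m}\u_h^{m+1}|^2 .
\]
This holds even if some image triangle degenerates or folds, since the area of a folded or degenerate triangle is still $\sigma_1\sigma_2|K|$, which is at most $\frac12(\sigma_1^2+\sigma_2^2)|K|$.

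For the identity map on a polyhedral surface, the tangential gradient is the projection onto the tangent plane, so $|\nabla_{\Gamma_h^m}\id_{\Gamma_h^m}|^2=2$. Hence $\frac12\int_{\Gamma_h^m}|\nabla_{\Gamma_h^m}\id_{\Gamma_h^m}|^2=|\Gamma_h^m|$. This is why the factor $\frac12$, inherited from the conformal relation \eqref{surface-conformal-factor}, is the correct normalization.

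\emph{Conclusion.} Chaining the two parts gives
\[
|\Gamma_h^{m+1}|\le\tfrac12\|\nabla_{\Gamma_h^m}\u_h^{m+1}\|^2_{L^2(\Gamma_h^m)}\le\tfrac12\|\nabla_{\Gamma_h^m}\id_{\Gamma_h^m}\|^2_{L^2(\Gamma_h^m)}=|\Gamma_h^m|,
\]
with no restriction on $\tau_m$.
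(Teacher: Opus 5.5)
Your proposal is correct and follows essentially the paper's proof. The algebraic Runge--Kutta identity on $S_h[\x^m]^3/\R^3$, combined with the per-stage lemma, gives Dirichlet-energy decay, and an elementwise area--Dirichlet comparison together with $\frac12\int|\nabla_{\Gamma_h^m}\id_{\Gamma_h^m}|^2=|\Gamma_h^m|$ turns this into area decay. Your singular-value form $\sigma_1\sigma_2\le\frac12(\sigma_1^2+\sigma_2^2)$ is the same step as the paper's chain $|a\times b|\le|a|\,|b|\le\frac12(|a|^2+|b|^2)$ in an orthonormal frame on each face.
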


\begin{proof}%
Apply \eqref{B-stability-identity} in the quotient Hilbert space $S_h[\x^m]^3/\R^3$ equipped with the gradient inner product. Lemma~\ref{lem:perstage-surf} then gives
\begin{align}\label{MCF-Bstab-surf}
&\frac{1}{2}\Big(\|\nabla_{\Gamma_h^m}\u_h^{m+1}\|^2_{L^2(\Gamma_h^m)}
-\|\nabla_{\Gamma_h^m}\u_h^{m,0}\|^2_{L^2(\Gamma_h^m)}\Big)\notag\\
&\quad=\tau_m\sum_{i=1}^{s}b_i
(\nabla_{\Gamma_h^{m}}\u_h^{m,i},\nabla_{\Gamma_h^{m}}F_h^{m,i})_{\Gamma_h^{m}}\notag\\
&\qquad-\frac{\tau_m^2}{2}\sum_{i,j=1}^{s}\mathcal M_{ij}
(\nabla_{\Gamma_h^{m}}F_h^{m,i},\nabla_{\Gamma_h^{m}}F_h^{m,j})_{\Gamma_h^{m}}\le0.
\end{align}

To convert the gradient estimate into area decay, let $\{\partial_{\rhobm_1^m},\partial_{\rhobm_2^m}\}$ be an orthonormal tangent frame on each planar face of $\Gamma_h^m$. The piecewise-linear image under $\u_h^{m+1}$ satisfies
\begin{align} \label{energy-proof-4_surf}
	|\Gamma_h^{m+1}|-  |\Gamma_h^{m}|
	& = \int_{\Gamma_h^m}\Big(| \partial_{\rhobm_1^m}\u_h^{m+1} \times \partial_{\rhobm_2^m}\u_h^{m+1} | - 1\Big)\\
	& \le \int_{\Gamma_h^m}\Big(| \partial_{\rhobm_1^m}\u_h^{m+1} |\,| \partial_{\rhobm_2^m}\u_h^{m+1} | - 1\Big)\\
	& \le \frac{1}{2}\int_{\Gamma_h^{m}} \Big(| \partial_{\rhobm_1^m}\u_h^{m+1} |^2 + | \partial_{\rhobm_2^m}\u_h^{m+1} |^2\Big)  - \frac{1}{2} \int_{\Gamma_h^{m}} |\nabla_{\Gamma_h^{m}} {\id}_{\Gamma_h^m}|^2\\
	& = \frac{1}{2} \int_{\Gamma_h^{m}} |\nabla_{\Gamma_h^{m}} \u_h^{m+1}|^2 - \frac{1}{2} \int_{\Gamma_h^{m}} |\nabla_{\Gamma_h^{m}} {\id}_{\Gamma_h^m}|^2\\
	& = \frac{1}{2}\Big(\|\nabla_{\Gamma_h^m}\u_h^{m+1}\|^2_{L^2(\Gamma_h^m)} - \|\nabla_{\Gamma_h^m}\u_h^{m,0}\|^2_{L^2(\Gamma_h^m)}\Big) \le 0.
\end{align}
The first inequality follows from $|a\times b|\le|a||b|$, and the second from $2xy\le x^2+y^2$. Finally, $|\nabla_{\Gamma_h^m}\id_{\Gamma_h^m}|^2=2$, so $\frac12\int|\nabla\id|^2=|\Gamma_h^m|$. Combining this identity with \eqref{MCF-Bstab-surf} proves the stated energy decay.
\end{proof}

\subsection{Energy stability for surface diffusion}

Surface diffusion has normal velocity $\Delta_{\Gamma^t}H^t$:
\begin{align}
\partial_t\X^{m,t}\cdot(\n^t\circ\X^{m,t}) = (\Delta_{\Gamma^t}H^t)\circ\X^{m,t},
\end{align}
where $\Delta_{\Gamma^t}$ is the Laplace--Beltrami operator on the evolving surface $\Gamma^t$ and $H^t$ its (scalar) mean curvature.

Applying the conformal transformation law \eqref{surface-laplacian-pullback} with $f=H^t$ gives
\begin{align}\label{eqn_Dekat_Dem_surf}
\Delta_{\Gamma^m}(H^t\circ\X^{m,t})
= \frac12|\nabla_{\Gamma^m}\X^{m,t}|^2
  (\Delta_{\Gamma^t}H^t)\circ\X^{m,t}.
\end{align}
Combining \eqref{eqn_Dekat_Dem_surf} with the normal-velocity equation yields
\begin{align}
\frac{1}{2}|\nabla_{\Gamma^m}\X^{m,t}|^2\,\partial_t\X^{m,t}\cdot(\n^t\circ\X^{m,t}) = \Delta_{\Gamma^m}(H^t\circ\X^{m,t}).
\end{align}
The corresponding weak formulation is to find $\X^{m,t}:\Gamma^m\to\Gamma^t$ with $\X^{m,t_m}=\id_{\Gamma^m}$ and $H^t:\Gamma^t\to\R$ such that
\begin{subequations}\label{sd-BGN-High-weak_surf}
\begin{align}
& \int_{\Gamma^m}\frac{1}{2}|\nabla_{\Gamma^m}\X^{m,t}|^2\partial_t\X^{m,t}\cdot(\n^t\circ\X^{m,t})\varphi\,\d\Gamma^m = -\int_{\Gamma^m}\nabla_{\Gamma^m}(H^t\circ\X^{m,t})\cdot\nabla_{\Gamma^m}\varphi\,\d\Gamma^m,\\
& \int_{\Gamma^m}\frac{1}{2}|\nabla_{\Gamma^m}\X^{m,t}|^2[(H^t\n^t)\circ\X^{m,t}]\cdot\w\,\d\Gamma^m = \int_{\Gamma^m}\nabla_{\Gamma^m}\X^{m,t}:\nabla_{\Gamma^m}\w\,\d\Gamma^m,
\end{align}
\end{subequations}
for all $\varphi\in H^1(\Gamma^m)$ and $\w\in H^1(\Gamma^m)^3$. The second equation is the weak mean-curvature relation from Proposition~\ref{surf_X_harm}. Evaluating this system at the Runge--Kutta internal times and applying the mass-lumped parametric finite element discretization gives the stage equations below.

\medskip
\noindent{\bf Flow-specific stage equations}
\smallskip

Using the stage notation and elementwise pulled-back normals defined above, set $\u_h^{m,0}=\id_{\Gamma_h^m}$ and solve for $\{\u_h^{m,i},H_h^{m,i},F_h^{m,i}\}_{i=1}^s\subset S_h[\x^m]^3\times S_h[\x^m]\times S_h[\x^m]^3$:
\begin{subequations}\label{sd-BGN-High-fully_surf}
\begin{align}
& \u_h^{m,i} = \u_h^{m,0} + \tau_m\sum_{j=1}^s a_{ij}F_h^{m,j}, \qquad i=1,\dots,s,\\
\label{sd-scheme-Xi_surf}
& \Big(\frac{1}{2}|\nabla_{\Gamma_h^m}\u_h^{m,i}|^2 F_h^{m,i}\cdot\n_h^{m,i},\,\varphi_h\Big)_{\Gamma_h^m}^h = -\Big(\nabla_{\Gamma_h^m}H_h^{m,i},\,\nabla_{\Gamma_h^m}\varphi_h\Big)_{\Gamma_h^m}, \quad i=1,\dots,s,\\
\label{sd-scheme-Hi_surf}
& \Big(\frac{1}{2}|\nabla_{\Gamma_h^m}\u_h^{m,i}|^2 H_h^{m,i}\n_h^{m,i},\,\w_h\Big)_{\Gamma_h^m}^h = \Big(\nabla_{\Gamma_h^m}\u_h^{m,i},\,\nabla_{\Gamma_h^m}\w_h\Big)_{\Gamma_h^m}, \quad i=1,\dots,s,
\end{align}
\end{subequations}
for all $\varphi_h\in S_h[\x^m]$ and $\w_h\in S_h[\x^m]^3$.

The time-node value and the next triangulated surface are then obtained from the common endpoint update \eqref{rk-pfem-update} and mesh push-forward.

\begin{lemma}[Per-stage energy decay for surface diffusion]\label{lem:perstage-sd-surf}
For each stage $i=1,\dots,s$, any solution of the nonlinear scheme \eqref{sd-BGN-High-fully_surf} satisfies
\begin{align}
\big(\nabla_{\Gamma_h^m}\u_h^{m,i},\,\nabla_{\Gamma_h^m}F_h^{m,i}\big)_{\Gamma_h^m} = -\int_{\Gamma_h^m}|\nabla_{\Gamma_h^m}H_h^{m,i}|^2 \le 0.
\end{align}
\end{lemma}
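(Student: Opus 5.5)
The argument is the same two-test cancellation used for Lemma~\ref{lem:perstage-sd}, now applied to the surface stage system \eqref{sd-BGN-High-fully_surf}. Fix a stage $i$. First, test the velocity equation \eqref{sd-scheme-Xi_surf} with $\varphi_h=H_h^{m,i}\in S_h[\x^m]$. This gives
\begin{align*}
\Big(\tfrac12|\nabla_{\Gamma_h^m}\u_h^{m,i}|^2 F_h^{m,i}\cdot\n_h^{m,i},\,H_h^{m,i}\Big)_{\Gamma_h^m}^h
=-\int_{\Gamma_h^m}|\nabla_{\Gamma_h^m}H_h^{m,i}|^2\le0 .
\end{align*}
Second, test the curvature equation \eqref{sd-scheme-Hi_surf} with $\w_h=F_h^{m,i}\in S_h[\x^m]^3$. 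This is admissible because $F_h^{m,i}$ lies in the same vector-valued finite element space. The result is
\begin{align*}
\big(\nabla_{\Gamma_h^m}\u_h^{m,i},\nabla_{\Gamma_h^m}F_h^{m,i}\big)_{\Gamma_h^m}
=\Big(\tfrac12|\nabla_{\Gamma_h^m}\u_h^{m,i}|^2 H_h^{m,i}\n_h^{m,i},\,F_h^{m,i}\Big)_{\Gamma_h^m}^h .
\end{align*}

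To identify the two mass-lumped expressions, expand both through the vertex rule \eqref{mass-lumped-inner-product}. On each $K\in\mathcal T_h^m$, the weight $\tfrac12|\nabla_{\Gamma_h^m}\u_h^{m,i}|^2$ and the pulled-back normal $\n_h^{m,i}|_K$ take the same elementwise values in both expressions. At every vertex, the integrands satisfy
\[
(H_h^{m,i}\n_h^{m,i})\cdot F_h^{m,i}=H_h^{m,i}\,(F_h^{m,i}\cdot\n_h^{m,i}).
\]
The two right-hand sides are therefore equal term by term. Chaining the two identities gives the claimed equality, and the sign follows from the first identity.

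The only point that needs care is this identification step. The weight and the normal are piecewise constant and discontinuous across elements, so the equality must be checked elementwise at the vertex values rather than by appealing to a global symmetric bilinear form. Both stage equations use the same factor $\tfrac12$, the same weight and the same normal, so the coefficients match exactly. Unlike the mean curvature flow case in Lemma~\ref{lem:perstage-surf}, no residual factor $\tfrac12$ remains. Nondegeneracy of the intermediate triangles is needed only so that $\n_h^{m,i}$ is well defined.
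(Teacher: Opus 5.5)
Your proposal is correct and matches the paper's proof: you test \eqref{sd-scheme-Xi_surf} with $\varphi_h=H_h^{m,i}$ and \eqref{sd-scheme-Hi_surf} with $\w_h=F_h^{m,i}$, then identify the two mass-lumped terms through the shared weight $\tfrac12|\nabla_{\Gamma_h^m}\u_h^{m,i}|^2$ and normal $\n_h^{m,i}$. Your elementwise vertex-rule check is a more explicit version of the paper's appeal to symmetry of the mass-lumped product.
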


\begin{proof}
Test \eqref{sd-scheme-Xi_surf} with $\varphi_h=H_h^{m,i}$ and \eqref{sd-scheme-Hi_surf} with $\w_h=F_h^{m,i}$. The same cancellation as in Lemma~\ref{lem:perstage-sd} gives
\begin{align}
\big(\nabla_{\Gamma_h^m}\u_h^{m,i},\,\nabla_{\Gamma_h^m}F_h^{m,i}\big)_{\Gamma_h^m}
= \Big(\frac{1}{2}|\nabla_{\Gamma_h^m}\u_h^{m,i}|^2 F_h^{m,i}\cdot\n_h^{m,i},\,H_h^{m,i}\Big)^h_{\Gamma_h^m}
= -\int_{\Gamma_h^m}|\nabla_{\Gamma_h^m}H_h^{m,i}|^2.
\end{align}
\end{proof}

\begin{theorem}[Energy decay for surface diffusion]\label{thm:sd_energy_decay_surf}
Let the RK method be algebraically stable as in Theorem~\ref{thm:mcf_energy_decay_curve}. Fix $m\in\{0,\ldots,M-1\}$ and $\tau_m>0$. Assume that \eqref{sd-BGN-High-fully_surf} admits an exact solution whose intermediate triangulated surfaces are nondegenerate. Then
\begin{align}
|\Gamma_h^{m+1}|\le|\Gamma_h^m|.
\end{align}
\end{theorem}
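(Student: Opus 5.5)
The proof will mirror that of Theorem~\ref{thm:mcf_energy_decay_surf}, replacing the per-stage identity of Lemma~\ref{lem:perstage-surf} with Lemma~\ref{lem:perstage-sd-surf}. We take the quotient space $V=S_h[\x^m]^3/\R^3$ with the gradient inner product $\langle\cdot,\cdot\rangle=(\nabla_{\Gamma_h^m}\cdot,\nabla_{\Gamma_h^m}\cdot)_{\Gamma_h^m}$. This form is positive definite on $V$ because $\Gamma_h^m$ is a connected closed triangulated surface, so only constants have vanishing gradient. The stage relation and the common endpoint update \eqref{rk-pfem-update} are then precisely \eqref{RK-stage}--\eqref{RK-update} in $V$, with $y^m=\u_h^{m,0}$, $y^{m,i}=\u_h^{m,i}$ and $f^{m,i}=F_h^{m,i}$. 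Lemma~\ref{lem:B-stability} therefore yields the identity \eqref{B-stability-identity} for $\tfrac12(\|\nabla_{\Gamma_h^m}\u_h^{m+1}\|^2-\|\nabla_{\Gamma_h^m}\u_h^{m,0}\|^2)$. The quotient causes no difficulty, since both sides of \eqref{B-stability-identity} involve only gradients.

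Next we bound the two terms. By Lemma~\ref{lem:perstage-sd-surf}, each $\langle\u_h^{m,i},F_h^{m,i}\rangle=-\int_{\Gamma_h^m}|\nabla_{\Gamma_h^m}H_h^{m,i}|^2\le0$. Since $b_i\ge0$, the first sum is nonpositive. The second term is $-\tfrac{\tau_m^2}{2}\sum_{i,j}\mathcal M_{ij}G_{ij}$, where $G$ is the Gram matrix of the $\nabla F_h^{m,i}$. This term is nonpositive because $\mathcal M\succeq0$ and $G\succeq0$ give $\operatorname{tr}(\mathcal M G)\ge0$. Hence $\|\nabla_{\Gamma_h^m}\u_h^{m+1}\|^2\le\|\nabla_{\Gamma_h^m}\id_{\Gamma_h^m}\|^2$. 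The nondegeneracy assumption is used only at this stage: it ensures that the stage normals $\n_h^{m,i}$ are well defined, so that Lemma~\ref{lem:perstage-sd-surf} applies to the given exact solution.

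Finally, we convert the gradient bound into area decay using the facewise geometric chain \eqref{energy-proof-4_surf}. On each face of $\Gamma_h^m$ with an orthonormal tangent frame, the area element of the image is $|\partial_{\rhobm_1^m}\u_h^{m+1}\times\partial_{\rhobm_2^m}\u_h^{m+1}|$. This is bounded by $|\partial_{\rhobm_1^m}\u_h^{m+1}|\,|\partial_{\rhobm_2^m}\u_h^{m+1}|$, and then by $\tfrac12|\nabla_{\Gamma_h^m}\u_h^{m+1}|^2$. Because $\Gamma_h^{m+1}$ is the push-forward of $\Gamma_h^m$ under the piecewise linear map $\u_h^{m+1}$, integrating gives $|\Gamma_h^{m+1}|\le\tfrac12\|\nabla\u_h^{m+1}\|^2$. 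Since $|\nabla_{\Gamma_h^m}\id|^2=2$, we also have $|\Gamma_h^m|=\tfrac12\|\nabla\id\|^2$, and combining these gives the claim. The main point to verify is that the push-forward area equals the integral of the Jacobian even if some image triangles fold. Without injectivity, the area formula gives only an upper bound, and that direction is exactly what is needed. There is no genuine obstacle beyond the per-stage cancellation, which Lemma~\ref{lem:perstage-sd-surf} already supplies.
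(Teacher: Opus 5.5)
Your proposal is correct and follows essentially the same route as the paper. You apply the algebraic Runge--Kutta identity in $S_h[\x^m]^3/\R^3$ with the gradient inner product, use Lemma~\ref{lem:perstage-sd-surf} together with $b_i\ge0$ and $\mathcal M\succeq0$ to control the two terms, and then convert the gradient bound into area decay via the facewise chain \eqref{energy-proof-4_surf} and $\frac12\int_{\Gamma_h^m}|\nabla_{\Gamma_h^m}\id_{\Gamma_h^m}|^2=|\Gamma_h^m|$; your extra remarks (the trace argument and the one-sided area formula) only make explicit what the paper leaves implicit.
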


\begin{proof}
Apply \eqref{B-stability-identity} on $S_h[\x^m]^3/\R^3$ with the gradient inner product. Lemma~\ref{lem:perstage-sd-surf} yields
\begin{align}
\frac{1}{2}\Big(\|\nabla_{\Gamma_h^m}\u_h^{m+1}\|^2_{L^2(\Gamma_h^m)} - \|\nabla_{\Gamma_h^m}\u_h^{m,0}\|^2_{L^2(\Gamma_h^m)}\Big) \le 0.
\end{align}
The surface-area inequality \eqref{energy-proof-4_surf}, valid for any piecewise-linear map, gives the stated energy decay by the argument of Theorem~\ref{thm:mcf_energy_decay_surf}.
\end{proof}

\section{Numerical results}\label{Se:5}
We present numerical results for the proposed high-order BGN methods using the Radau IIA family with $s=1,2,3$. The experiments examine temporal convergence in geometric shape distance, discrete energy decay, and mesh quality for planar curves and closed genus-$0$ surfaces. All computations are performed with Netgen/NGSolve \cite{schoeberl2014ngsolve}.

\subsection{Radau IIA methods}

Throughout the experiments, we use the $s$-stage Radau IIA methods. Their nodes $c_1<\cdots<c_s=1$ are the roots of $P_s(2x-1)-P_{s-1}(2x-1)$, where $P_s$ denotes the Legendre polynomial. Hence the rightmost node is the right endpoint of the interval. For $s=1,2,3$, the nodes and weights are
\begin{align}
& s=1:\quad c=(1),\quad b=(1),\quad A=(1),\\
& s=2:\quad c=\Big(\tfrac13,\,1\Big),\quad b=\Big(\tfrac34,\,\tfrac14\Big),\quad
A=\begin{pmatrix}\tfrac{5}{12} & -\tfrac{1}{12}\\[2pt] \tfrac{3}{4} & \tfrac{1}{4}\end{pmatrix},\\
& s=3:\quad c=\Big(\tfrac{4-\sqrt6}{10},\,\tfrac{4+\sqrt6}{10},\,1\Big),\quad
b=\Big(\tfrac{16-\sqrt6}{36},\,\tfrac{16+\sqrt6}{36},\,\tfrac19\Big),
\end{align}
and the Runge--Kutta matrix for $s=3$ is
\begin{align}
A=\begin{pmatrix}
\tfrac{88-7\sqrt6}{360} & \tfrac{296-169\sqrt6}{1800} & \tfrac{-2+3\sqrt6}{225}\\[2pt]
\tfrac{296+169\sqrt6}{1800} & \tfrac{88+7\sqrt6}{360} & \tfrac{-2-3\sqrt6}{225}\\[2pt]
\tfrac{16-\sqrt6}{36} & \tfrac{16+\sqrt6}{36} & \tfrac{1}{9}
\end{pmatrix}.
\end{align}
These methods are algebraically stable and $L$-stable, with classical order $k=2s-1$. They are stiffly accurate, $b^\top=e_s^\top A$, where $e_s=(0,\ldots,0,1)^\top\in\R^s$, so the numerical update equals the last stage value.

\subsection{Temporal convergence}

For two compact curves or surfaces $\Gamma$ and $\widetilde\Gamma$, let
\begin{align}\label{eq:hausdorff-distance}
d_{\rm H}(\Gamma,\widetilde\Gamma)
=\max\left\{
\sup_{x\in\Gamma}\inf_{\widetilde x\in\widetilde\Gamma}|x-\widetilde x|,
\sup_{\widetilde x\in\widetilde\Gamma}\inf_{x\in\Gamma}|\widetilde x-x|
\right\}.
\end{align}
On a fixed spatial mesh, let $\Gamma_{h,M}(T)$ denote the numerical approximation at time $T$ obtained with $M$ uniform time steps of size $\tau=T/M$. We define the geometric temporal self-error by
\begin{align}\label{eq:geometric-self-error}
E_M=d_{\rm H}\bigl(\Gamma_{h,M}(T),\Gamma_{h,2M}(T)\bigr).
\end{align}

\begin{example}[Curve-shortening flow of a circle and mean curvature flow of a sphere]
Under curve-shortening flow, the unit circle remains circular with exact radius $R(t)=\sqrt{1-2t}$. Starting from a regular polygon with $512$ vertices, the discrete solution preserves rotational symmetry, and its vertices therefore share a common radius $R_M(T)$. Hence, the Hausdorff self-error in \eqref{eq:geometric-self-error} reduces to $|R_M(T)-R_{2M}(T)|$. We set $T=0.2$ and use $M=2,4,8,16$. The left panel of Figure~\ref{fig:temporal-convergence} reports the resulting errors together with the fitted slopes and design-order reference lines.

Under mean curvature flow, the unit sphere remains spherical with exact radius $R(t)=\sqrt{1-4t}$. Starting from a regular icosahedron, the discrete surface remains homothetic, and the Hausdorff self-error therefore equals the difference between the common vertex radii of the two temporal approximations. At $T=0.1$, we evaluate \eqref{eq:geometric-self-error} for $M=4,8,16,32$, with each error computed from solutions using $M$ and $2M$ time steps. The right panel of Figure~\ref{fig:temporal-convergence} reports the errors together with the fitted slopes and design-order reference lines.

In both tests, the fitted slopes agree with the theoretical convergence orders.
\end{example}

\begin{figure}[htp]
\centering
\includegraphics[width=0.98\textwidth]{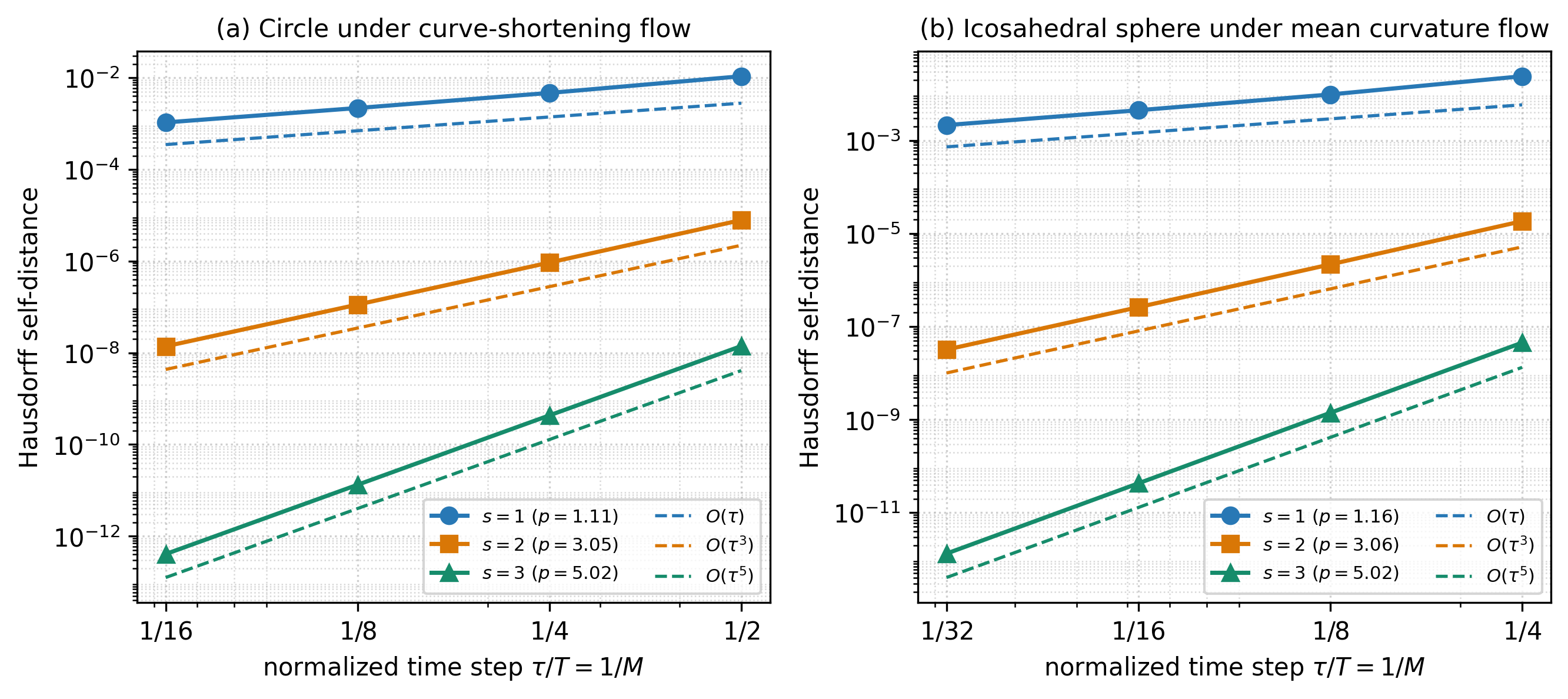}
\caption{Hausdorff self-errors for curve-shortening flow of a regular polygon (left) and mean curvature flow of a regular icosahedron (right), with dashed $O(\tau^{2s-1})$ reference lines.}
\label{fig:temporal-convergence}
\end{figure}

\begin{example}[Asymmetric curve flows]
For nonsymmetric evolving curves, the polygonal-curve distance in \eqref{eq:geometric-self-error} is evaluated directly between the two closed polygonal sets and is independent of node labels.

The two curve tests start from the smooth asymmetric curve
\begin{align}\label{eq:asymmetric-initial-curve}
r(\theta)&=1+0.16\cos(2\theta)+0.10\sin(3\theta)
             +0.05\cos(5\theta+0.3),\notag\\
\X^0(\theta)&=\big(1.10r(\theta)\cos\theta,\,
                         0.85r(\theta)\sin\theta\big),
\qquad 0\le\theta<2\pi.
\end{align}
We use $384$ vertices equidistributed in arc length and fix the phase by marking the vertex with maximal first coordinate. Curve-shortening flow is computed to $T=0.02$ with $M=1,2,4,8,16$, and curve diffusion is computed to $T=0.002$ with $M=1,2,4,8$.

The convergence rates observed in Figure~\ref{fig:geometric-temporal-convergence} agree with the theoretical convergence orders of the Radau IIA Runge--Kutta methods used in the computations.
\end{example}

\begin{figure}[htp]
\centering
\includegraphics[width=0.98\textwidth]{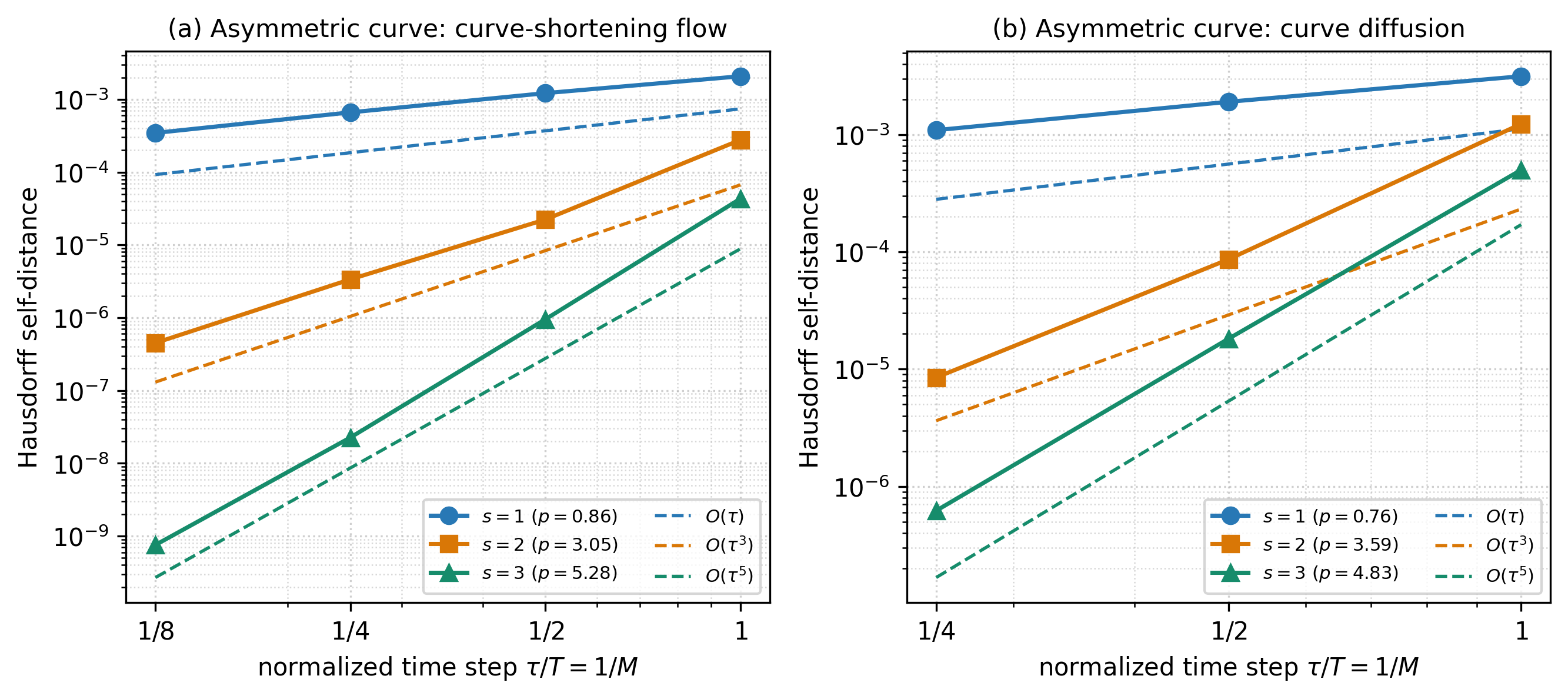}
\caption{Hausdorff self-errors for curve-shortening flow (left) and curve diffusion (right) of the asymmetric curve \eqref{eq:asymmetric-initial-curve}, plotted against the normalized time step with dashed $O(\tau^{2s-1})$ reference lines.}
\label{fig:geometric-temporal-convergence}
\end{figure}

\begin{example}[Surface diffusion of an ellipsoid]
For a nonsymmetric surface-diffusion test, we start from the ellipsoid
\begin{align}\label{eq:sd-ellipsoid}
\frac{x_1^2}{1.15^2}+x_2^2+\frac{x_3^2}{0.85^2}=1
\end{align}
and compute to $T=2.5\times10^{-4}$.
The corresponding step numbers are $(8,13,15,20,28,33,42)$ for $s=1$, $(8,10,11,13,15,16,18)$ for $s=2$, and $(8,9,10,11,12,13,14)$ for $s=3$.

The convergence rates observed in Figure~\ref{fig:ellipsoid-sd-coupled-convergence} agree with the theoretical convergence orders of the Radau IIA Runge--Kutta methods.
\end{example}

\begin{figure}[htp]
\centering
\includegraphics[width=0.78\textwidth]{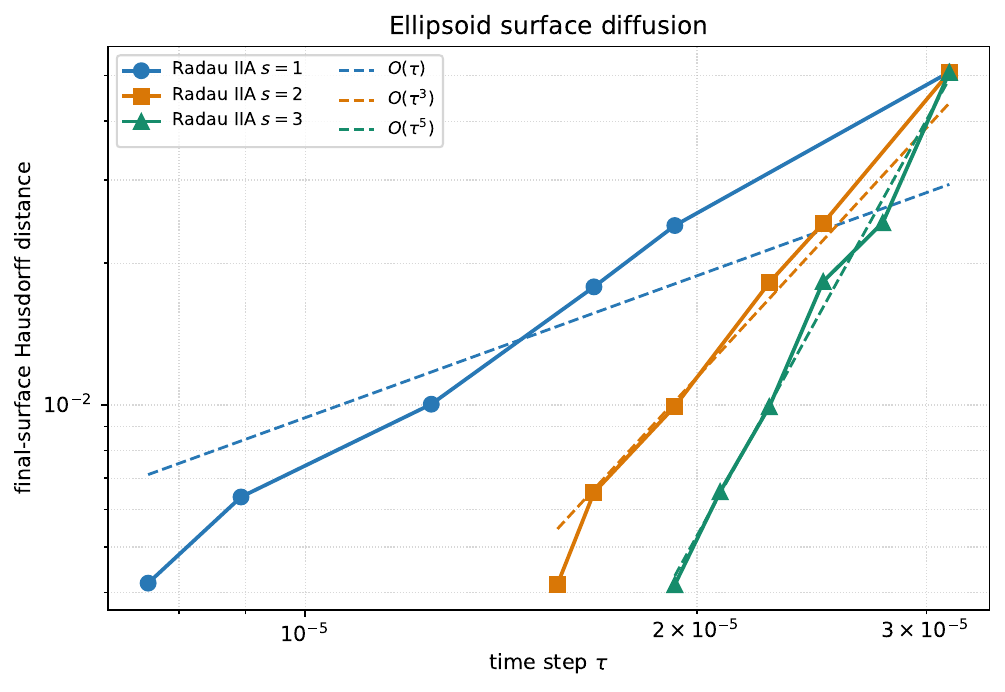}
\caption{Solid curves show the Hausdorff distances, and dashed lines indicate the reference orders $O(\tau^{2s-1})$.}
\label{fig:ellipsoid-sd-coupled-convergence}
\end{figure}

\subsection[Flower-curve flows]{Flower-curve flows}

\begin{example}[Flower curve under curve-shortening flow and curve diffusion]
\label{ex:flower-flows}
The initial flower curve is
\begin{align}\label{eq:flower-curve}
\X^0(\theta)=\bigl(1+0.65\sin(7\theta)\bigr)(\cos\theta,\sin\theta),
\qquad 0\le\theta<2\pi.
\end{align}
The initial polygon has eight edges on each half-petal, giving $112$ vertices. If $h_e^m$ is the length of edge $e$ at time $t_m$, define
\begin{align}\label{eq:mesh-quality-ratio}
Q_h^m=\frac{\max_e h_e^m}{\min_e h_e^m}.
\end{align}
We first evolve \eqref{eq:flower-curve} by curve-shortening flow to $T=0.3$ with $s=2,3$ and $800$ time steps, using the classical BGN scheme as a reference. The discrete length decreases from approximately $19.72$ to $4.83$. Figure~\ref{fig:mcf-flower} and the left panel of Figure~\ref{fig:flower-mesh-quality} show the evolving curve and $Q_h^m$. Its initial value is approximately $13$, and its final value is approximately $1.15$ for the Radau computations and $1.22$ for the BGN reference.

For curve diffusion, we use $T=0.04$, $s=2,3$, $112$ vertices, and $800$ time steps. The discrete length decreases from approximately $19.72$ to $6.85$, while $Q_h^m$ decreases from approximately $13$ to $1.08$ for both Radau computations and to $1.27$ for the BGN reference. Figure~\ref{fig:sd-flower} and the right panel of Figure~\ref{fig:flower-mesh-quality} show the corresponding curve evolution and mesh-quality ratio. After the initial redistribution, the Radau IIA curves lie below the BGN reference in both mesh-quality plots. Since $Q_h^m=1$ corresponds to equal edge lengths, the proposed high-order BGN schemes produce more uniform meshes than the classical BGN scheme in these flower-curve experiments.
\end{example}

\begin{figure}[htp]
\centering
\includegraphics[width=0.98\textwidth]{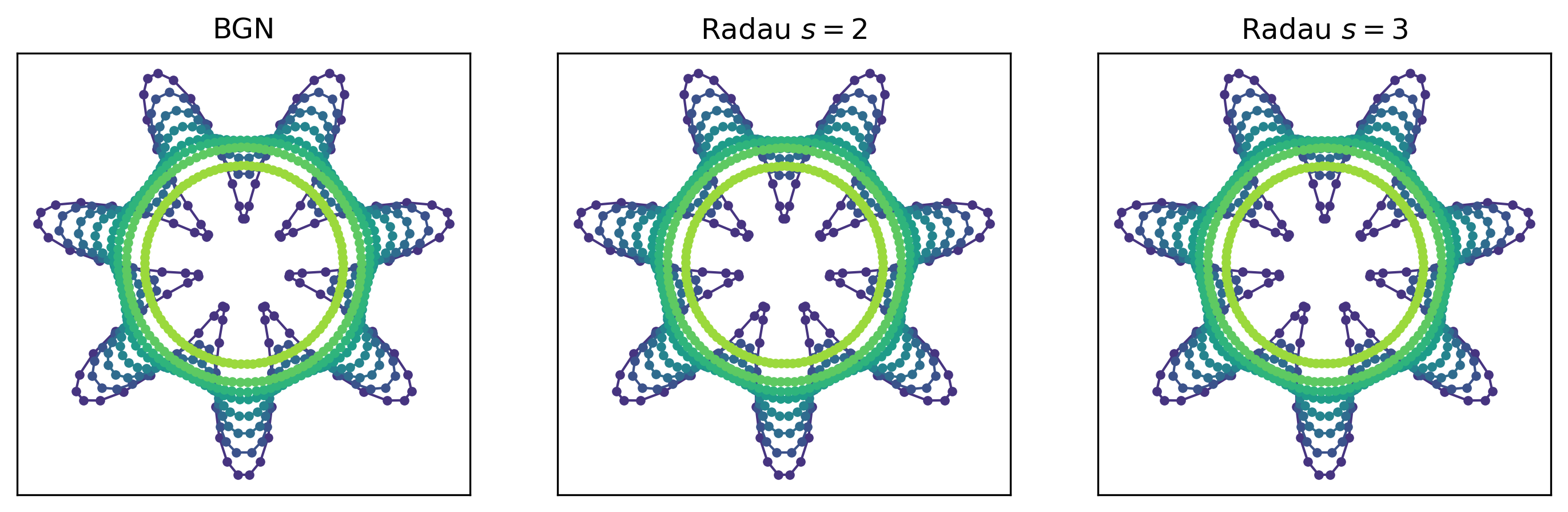}
\caption{Curve-shortening flow of \eqref{eq:flower-curve} with $112$ vertices and $\tau=0.3/800$.}
\label{fig:mcf-flower}
\end{figure}

\begin{figure}[htp]
\centering
\includegraphics[width=0.98\textwidth]{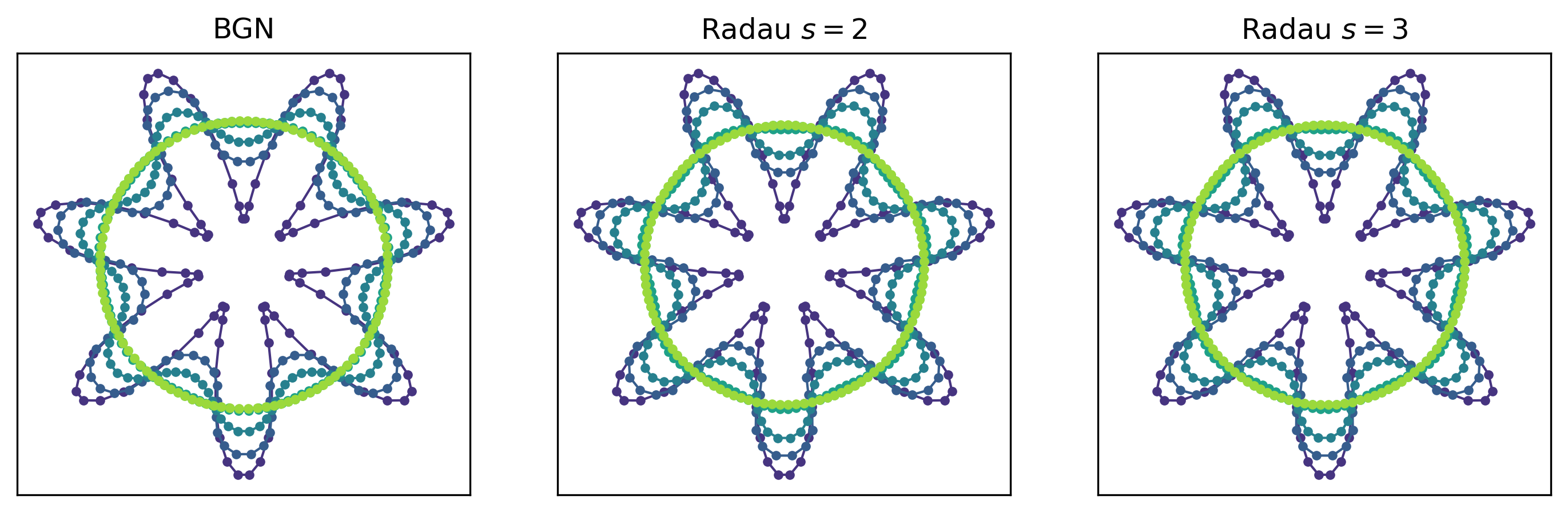}
\caption{Curve diffusion of \eqref{eq:flower-curve}. The classical BGN reference and the nonlinear Radau IIA schemes with $s=2,3$ use $112$ vertices and $\tau=0.04/800$.}
\label{fig:sd-flower}
\end{figure}

\begin{figure}[htp]
\centering
\includegraphics[width=0.49\textwidth]{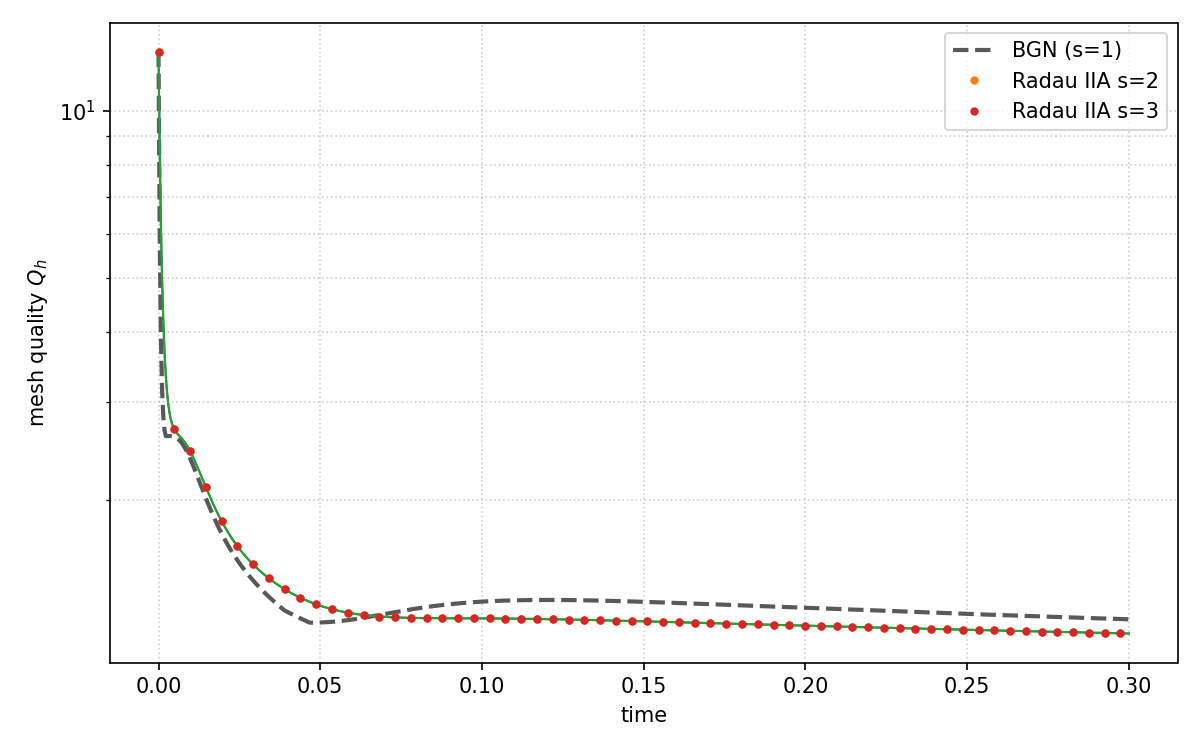}\hfill
\includegraphics[width=0.49\textwidth]{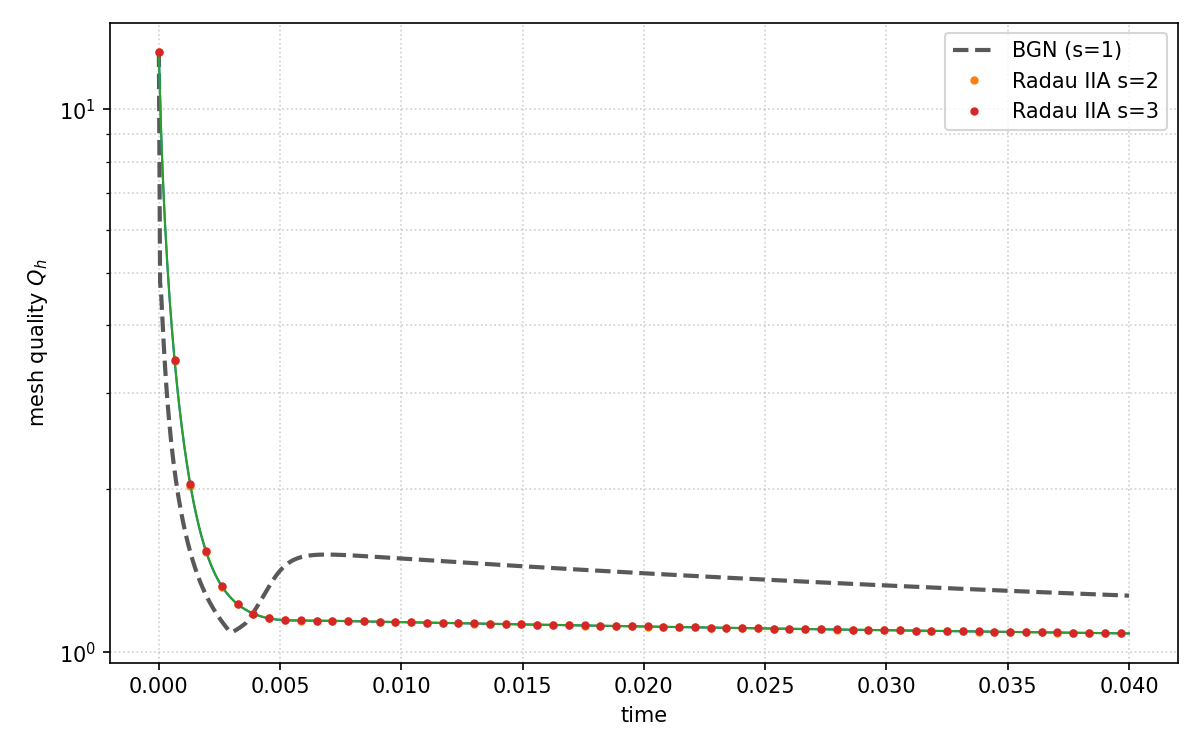}
\caption{Mesh-quality ratio $Q_h^m$ for curve-shortening flow (left) and curve diffusion (right). Dashed curves show the classical BGN results.}
\label{fig:flower-mesh-quality}
\end{figure}

\subsection[Surface flows of a cube]{Surface flows of a cube}

We apply the surface schemes of Section~\ref{Se:4} to the cube $[-1,1]^3$. Triangulation edges are displayed in every snapshot. All snapshots in each evolution use the same coordinate box and camera.

\begin{example}[Cube under mean curvature flow and surface diffusion]
\label{ex:cube-flows}
The cube has initial area $24$. Each face has six subdivisions along each edge and is triangulated by splitting every grid cell into two triangles. For mean curvature flow, we use $200$ time steps up to $T=0.2$. Figure~\ref{fig:cube-mcf-mesh} follows the rounding and shrinking of the $s=3$ mesh, and the left panel of Figure~\ref{fig:cube-area-curves} shows the area histories for $s=1,2,3$. The final areas are $8.7390$, $8.7334$, and $8.7333$, respectively. The final surface is nearly spherical.

For surface diffusion, we use $100$ time steps up to $T=0.06$. Figure~\ref{fig:cube-sd-mesh} and the right panel of Figure~\ref{fig:cube-area-curves} show the computations with $s=1,2,3$. The final areas are $19.4515$, $19.3830$, and $19.3787$, respectively.
\end{example}

\begin{figure}[htp]
\centering
\includegraphics[width=0.24\textwidth]{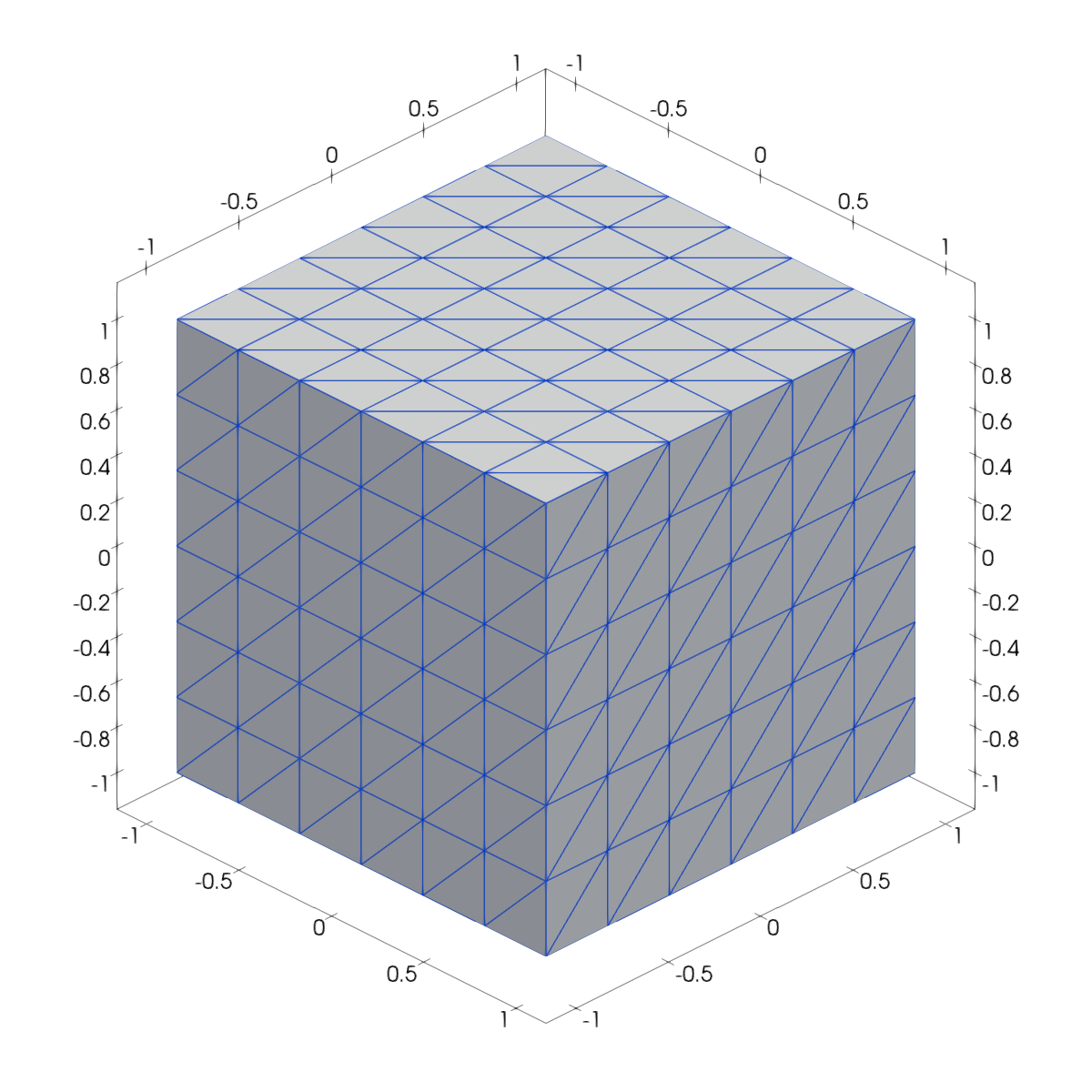}\hfill
\includegraphics[width=0.24\textwidth]{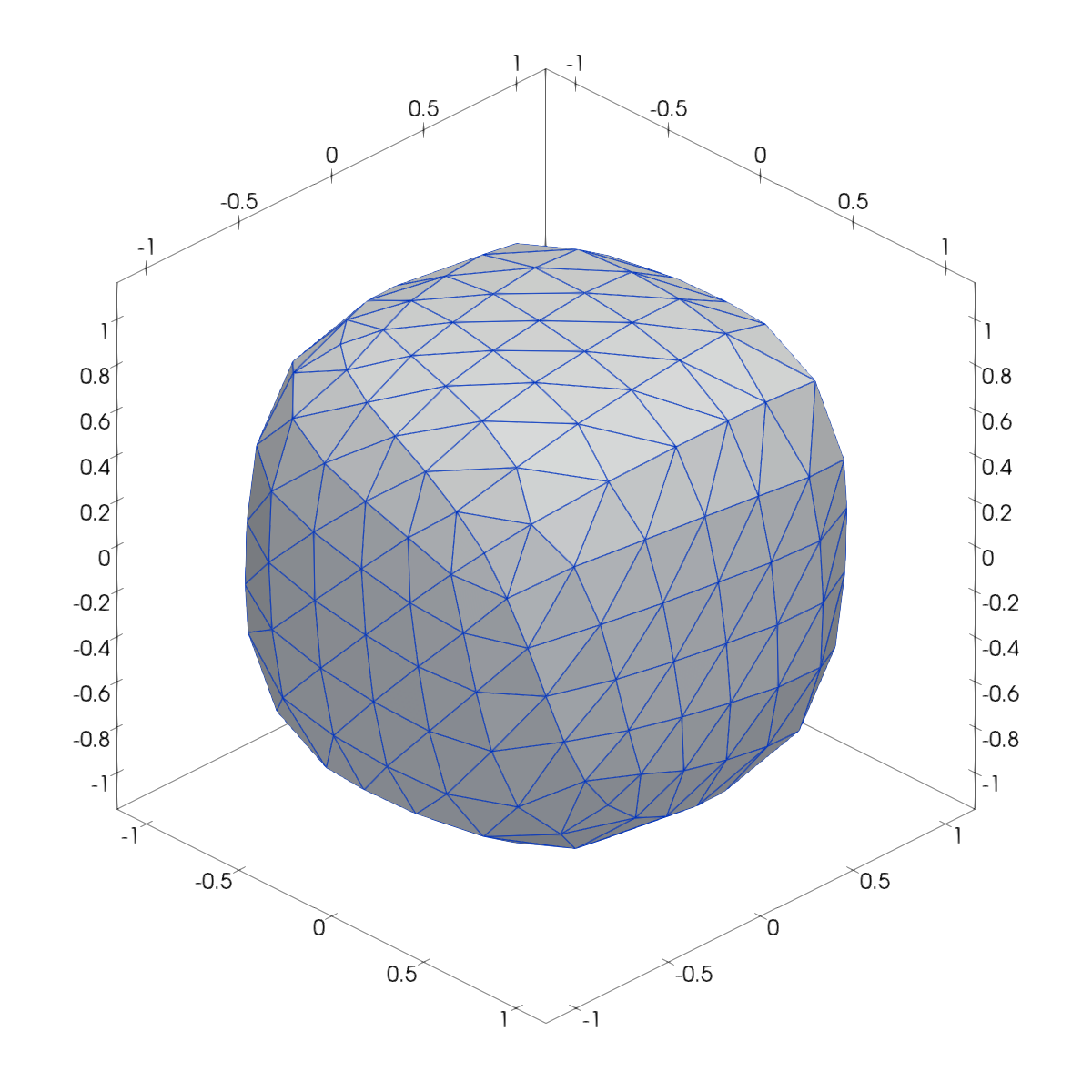}\hfill
\includegraphics[width=0.24\textwidth]{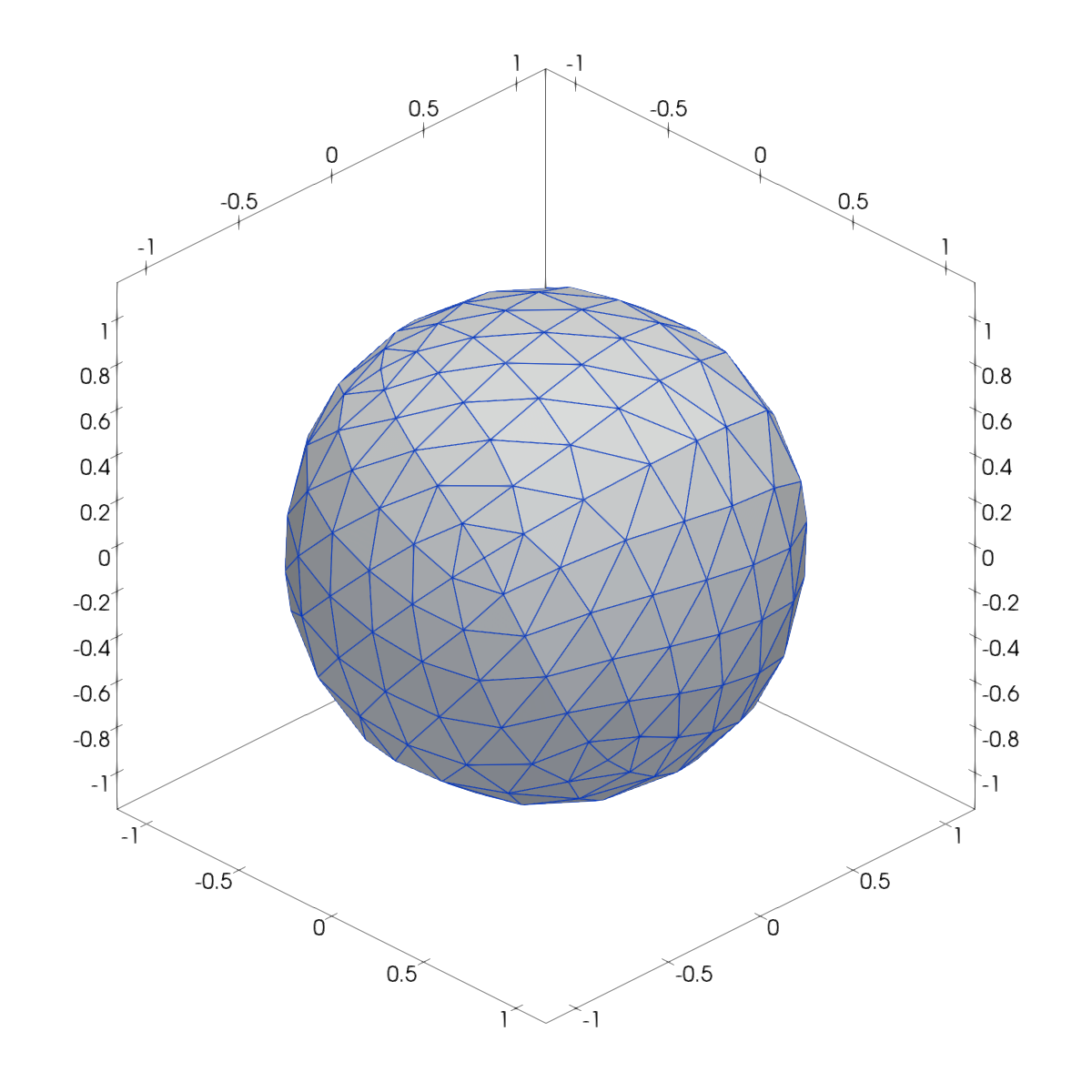}\hfill
\includegraphics[width=0.24\textwidth]{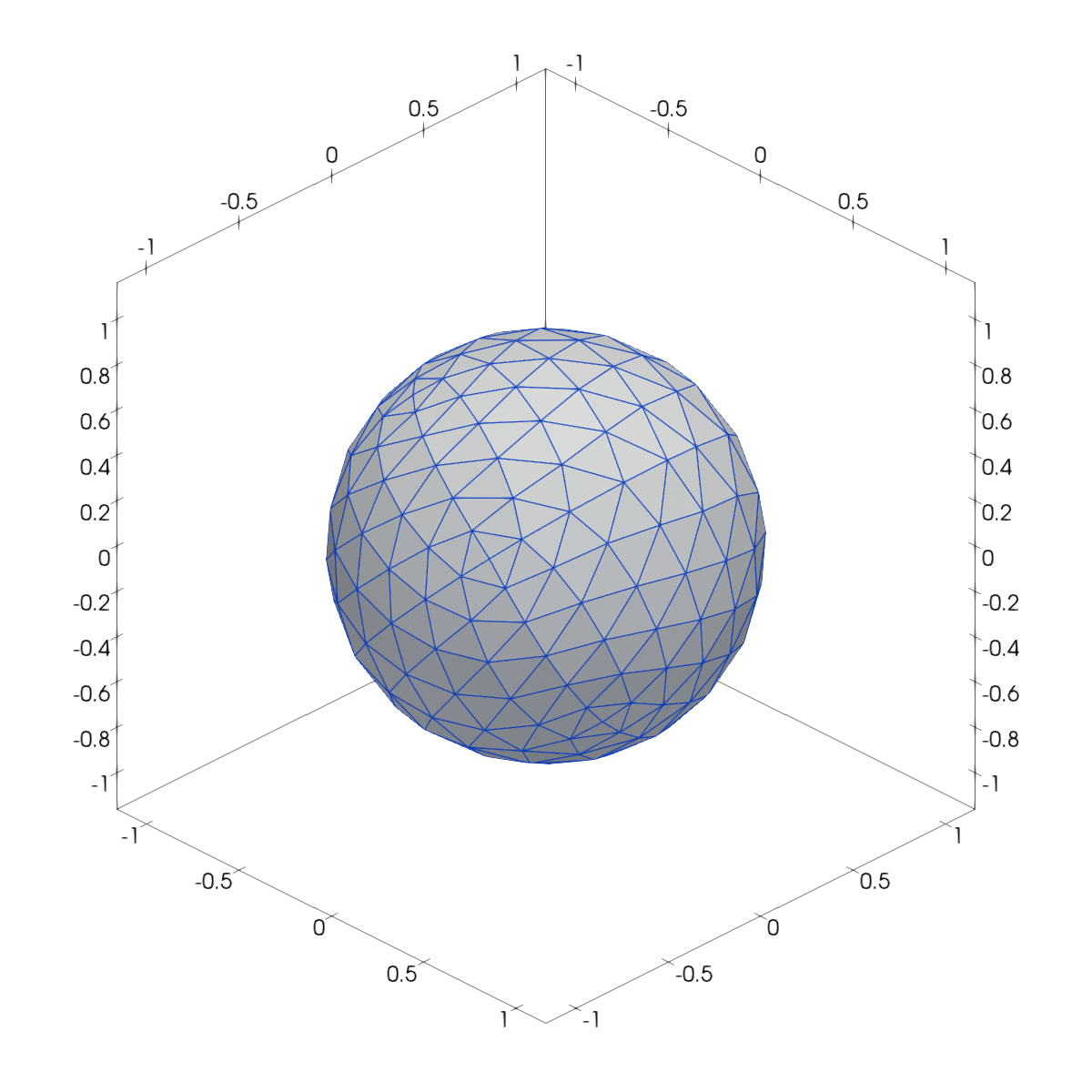}
\caption{Mean curvature flow of the cube. Mesh snapshots from the $s=3$ computation are shown at $t=0$, $0.067$, $0.133$, and $0.2$ from left to right.}
\label{fig:cube-mcf-mesh}
\end{figure}

\begin{figure}[htp]
\centering
\includegraphics[width=0.24\textwidth]{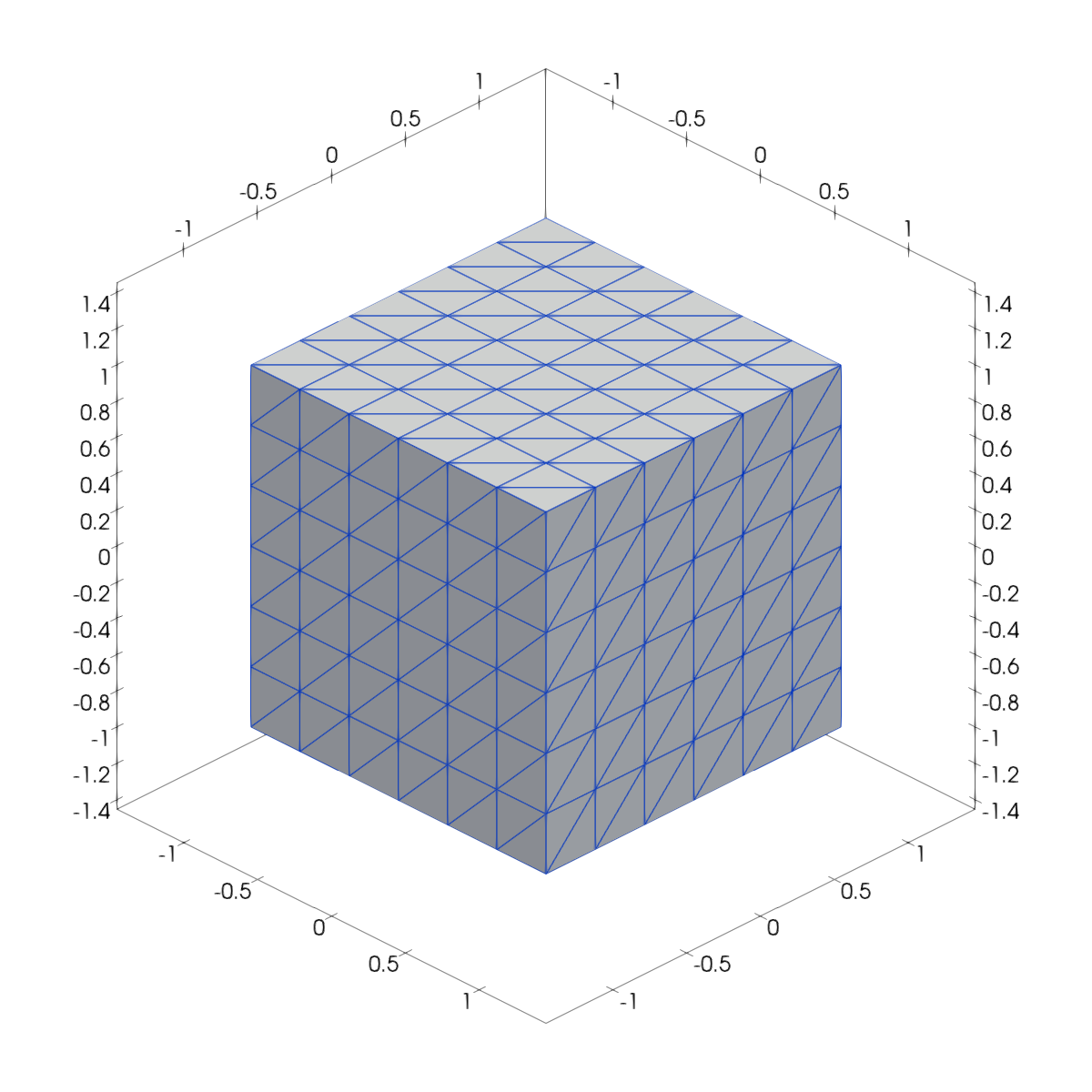}\hfill
\includegraphics[width=0.24\textwidth]{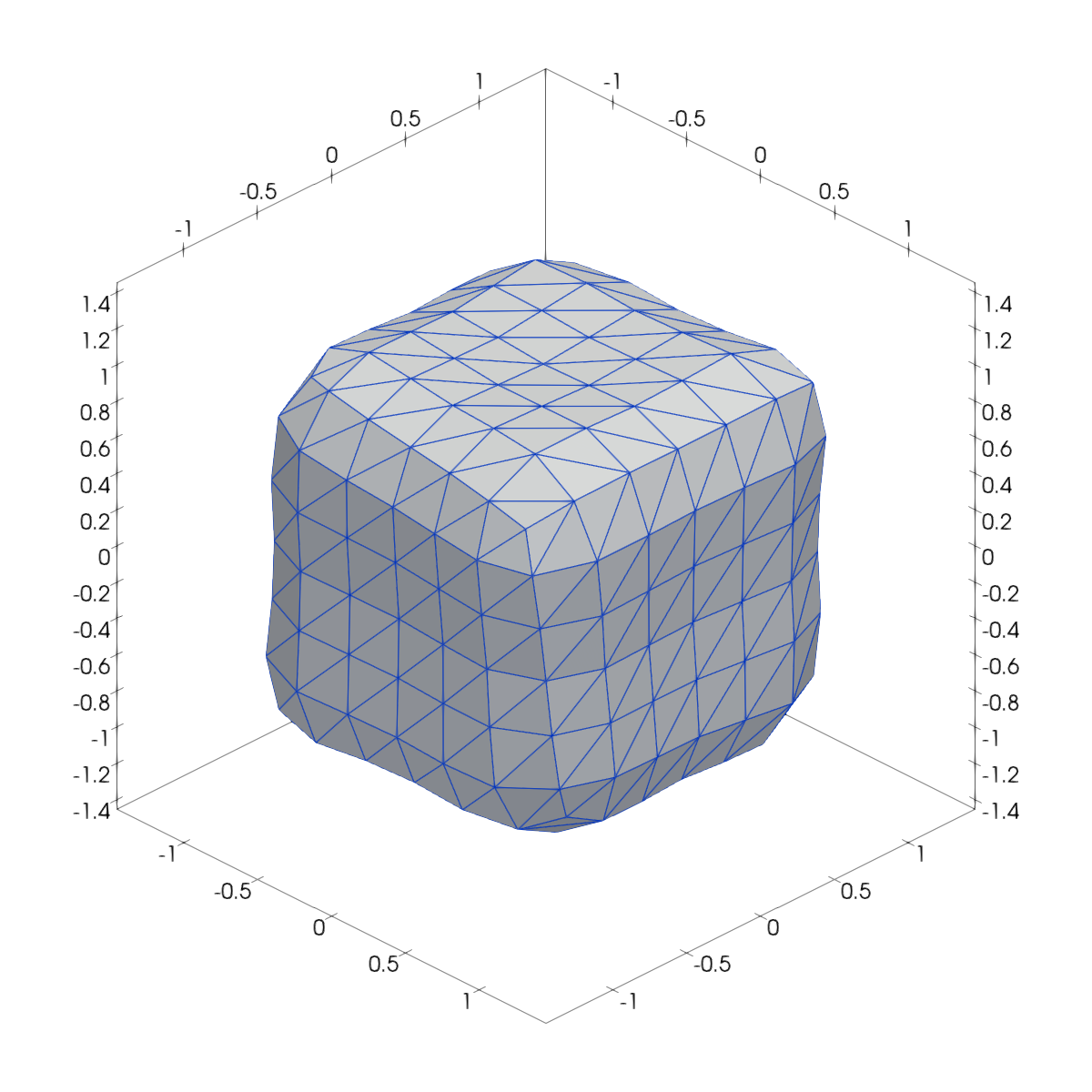}\hfill
\includegraphics[width=0.24\textwidth]{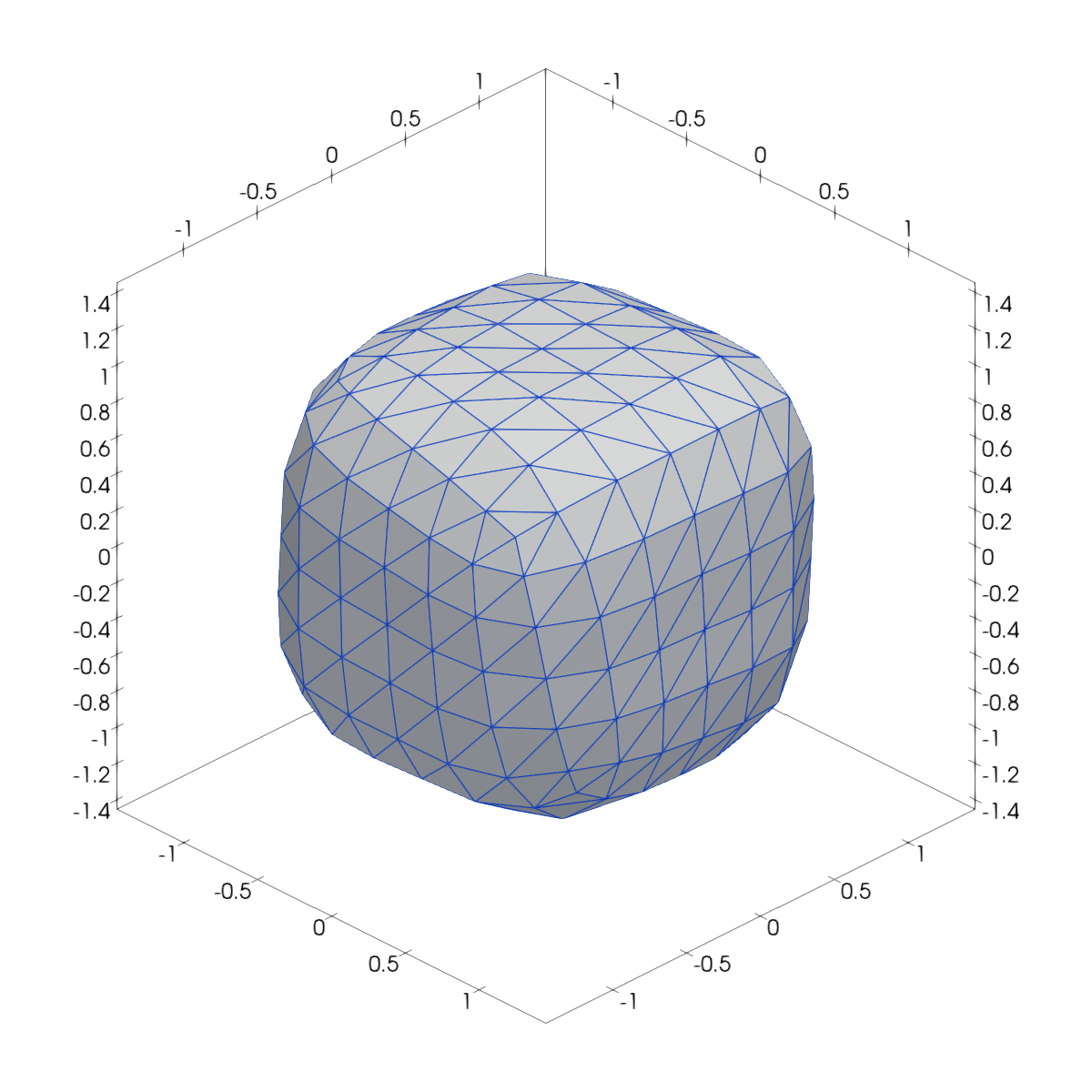}\hfill
\includegraphics[width=0.24\textwidth]{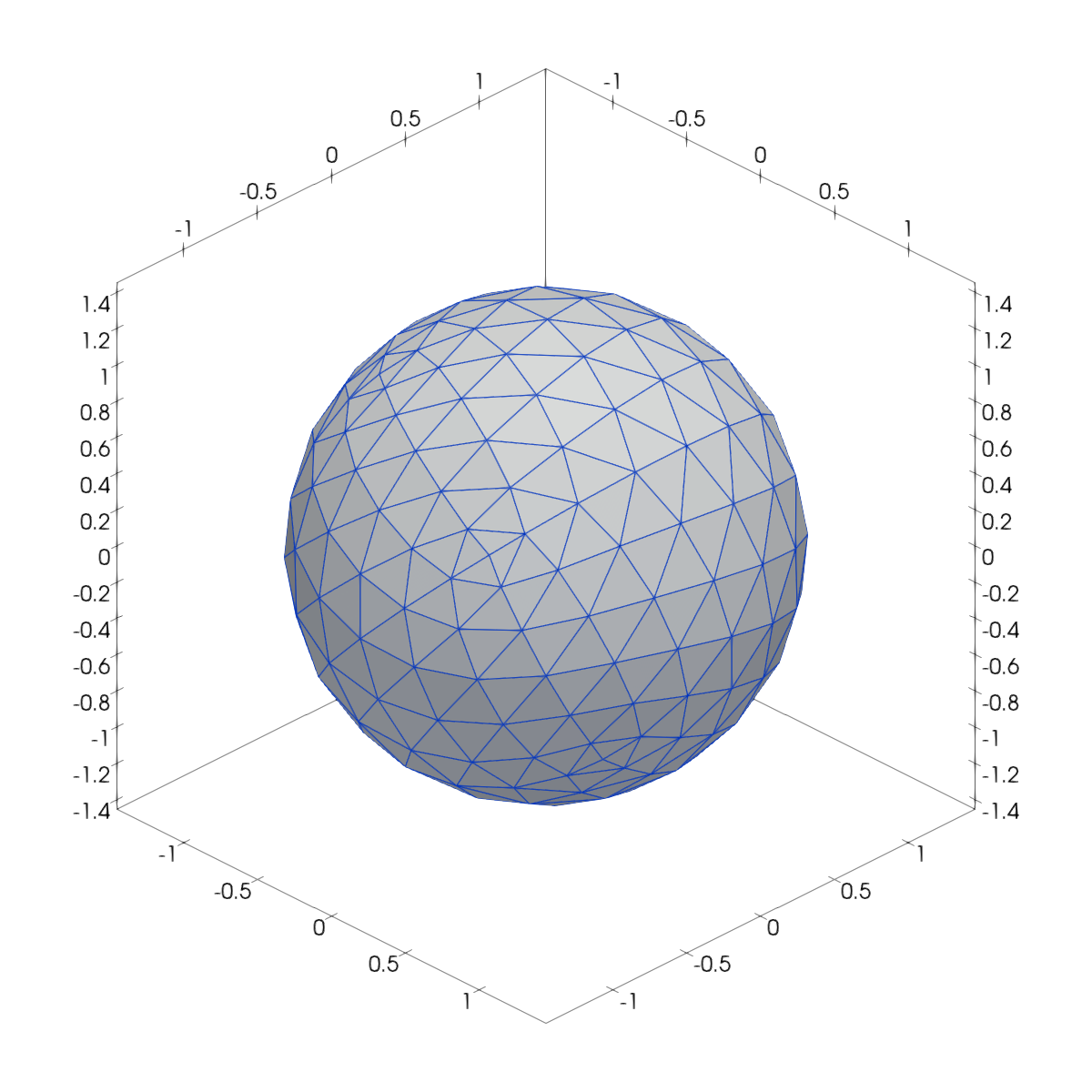}
\caption{Surface diffusion of the cube. Mesh snapshots from the $s=3$ computation are shown at $t=0$, $0.0018$, $0.006$, and $0.06$ from left to right.}
\label{fig:cube-sd-mesh}
\end{figure}

\begin{figure}[htp]
\centering
\includegraphics[width=0.49\textwidth]{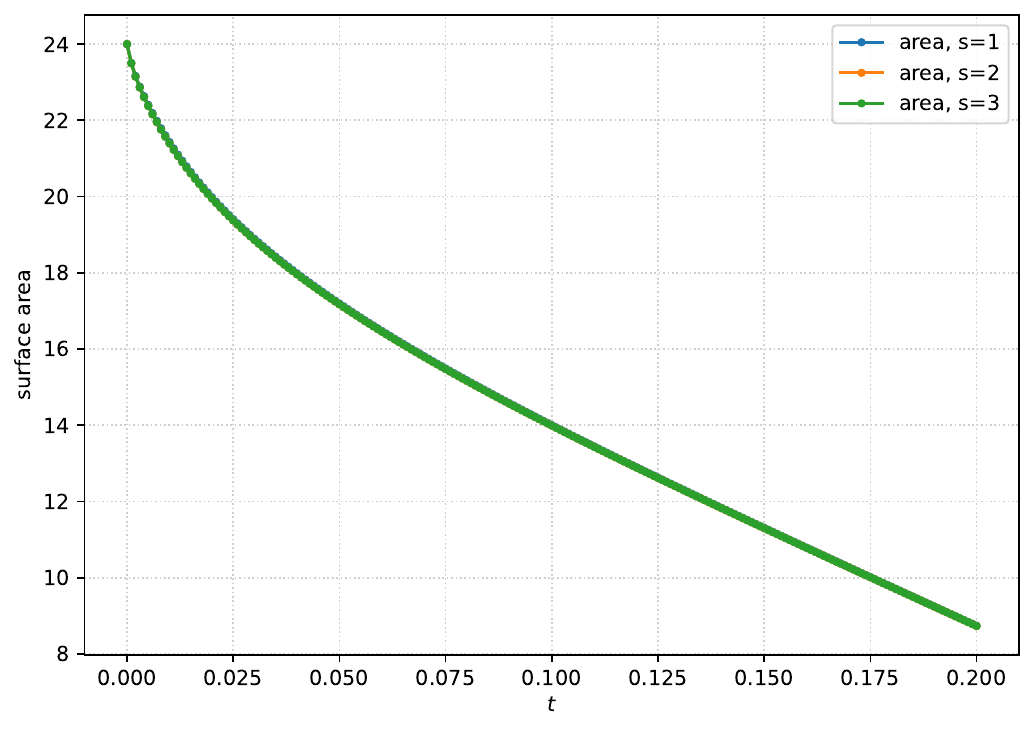}\hfill
\includegraphics[width=0.49\textwidth]{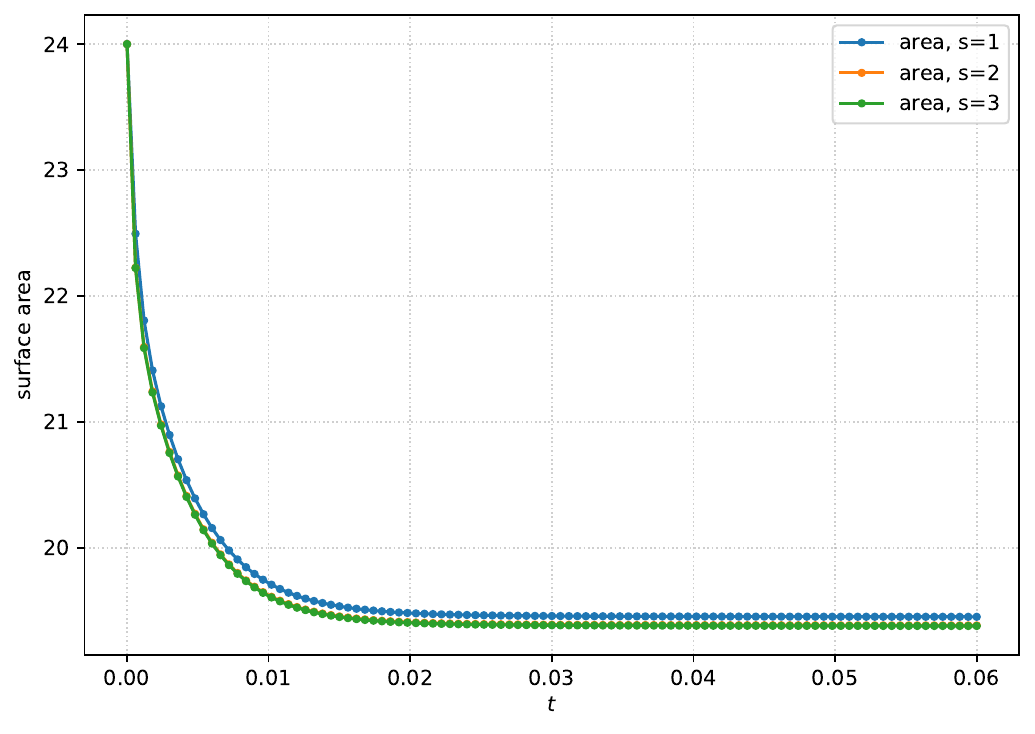}
\caption{Surface area versus time for mean curvature flow (left) and surface diffusion (right) of the cube. All curves are monotone.}
\label{fig:cube-area-curves}
\end{figure}

\section{Conclusion}\label{Se:6}

On each time slab, the harmonic-map equations are rewritten on the left-endpoint geometry $\Gamma^m$, while the Runge--Kutta internal times determine the intermediate target geometries. For closed genus-$0$ surfaces, conformality identifies the metric density in this common-domain formulation. Internal-time evaluation followed by mass-lumped parametric finite elements yields high-order Runge--Kutta extensions of the paired BGN structure for the four flows. Algebraic stability with nonnegative weights, stagewise BGN energy decay, and the algebraic Runge--Kutta identity give one-step decay of discrete length or area whenever an exact nondegenerate stage solution exists. The Radau IIA experiments exhibit this decay, BGN-type mesh redistribution for curves, and high-order self-convergence behavior consistent with the corresponding design orders.

\backmatter

\bibliography{surface-reference-SN,v5-related-references-SN}

@article{bao2021structure,
  title={A structure-preserving parametric finite element method for surface diffusion},
  author={Bao, Weizhu and Zhao, Quan},
  journal={SIAM Journal on Numerical Analysis},
  volume={59},
  number={5},
  pages={2775--2799},
  year={2021},
  publisher={SIAM}
}

@article{zhao2021energy,
  title={An energy-stable parametric finite element method for simulating solid-state dewetting},
  author={Zhao, Quan and Jiang, Wei and Bao, Weizhu},
  journal={IMA Journal of Numerical Analysis},
  volume={41},
  number={3},
  pages={2026--2055},
  year={2021},
  publisher={Oxford University Press}
}

@article{dziuk1990algorithm,
  title={An algorithm for evolutionary surfaces},
  author={Dziuk, Gerhard},
  journal={Numerische Mathematik},
  volume={58},
  pages={603--611},
  year={1990},
  publisher={Springer},
  doi={10.1007/BF01385643}
}

@article{barrett2007variational,
  title={On the variational approximation of combined second and fourth order geometric evolution equations},
  author={Barrett, John W and Garcke, Harald and N{\"u}rnberg, Robert},
  journal={SIAM Journal on Scientific Computing},
  volume={29},
  number={3},
  pages={1006--1041},
  year={2007},
  publisher={SIAM}
}

@article{barrett2007parametric,
  title={A parametric finite element method for fourth order geometric evolution equations},
  author={Barrett, John W and Garcke, Harald and N{\"u}rnberg, Robert},
  journal={Journal of Computational Physics},
  volume={222},
  number={1},
  pages={441--467},
  year={2007},
  publisher={Elsevier}
}

@article{duan2024new,
  title={New artificial tangential motions for parametric finite element approximation of surface evolution},
  author={Duan, Beiping and Li, Buyang},
  journal={SIAM Journal on Scientific Computing},
  volume={46},
  number={1},
  pages={A587--A608},
  year={2024},
  publisher={SIAM}
}

@article{jiang2024second,
  title={A second-order in time, {BGN}-based parametric finite element method for geometric flows of curves},
  author={Jiang, Wei and Su, Chunmei and Zhang, Ganghui},
  journal={Journal of Computational Physics},
  volume={514},
  pages={113220},
  year={2024},
  publisher={Elsevier},
  doi={10.1016/j.jcp.2024.113220}
}

@article{hu2022evolving,
  title={Evolving finite element methods with an artificial tangential velocity for mean curvature flow and {Willmore} flow},
  author={Hu, Jiashun and Li, Buyang},
  journal={Numerische Mathematik},
  volume={152},
  number={1},
  pages={127--181},
  year={2022},
  publisher={Springer}
}

@article{barrett2008parametric,
  title={On the parametric finite element approximation of evolving hypersurfaces in {$\mathbb{R}^3$}},
  author={Barrett, John W and Garcke, Harald and N{\"u}rnberg, Robert},
  journal={Journal of Computational Physics},
  volume={227},
  number={9},
  pages={4281--4307},
  year={2008},
  publisher={Elsevier}
}

@article{dziukLubichMansour2012,
  title={Runge--Kutta Time Discretization of Parabolic Differential Equations on Evolving Surfaces},
  author={Dziuk, Gerhard and Lubich, Christian and Mansour, Dhia},
  journal={IMA Journal of Numerical Analysis},
  volume={32},
  number={2},
  pages={394--416},
  year={2012},
  doi={10.1093/imanum/drr017},
  publisher={Oxford University Press}
}

@article{kovacs2018higher,
  title={Higher order time discretizations with ALE finite elements for parabolic problems on evolving surfaces},
  author={Kov{\'a}cs, Bal{\'a}zs and Power Guerra, Christian Andreas},
  journal={IMA Journal of Numerical Analysis},
  volume={38},
  number={1},
  pages={460--494},
  year={2018},
  doi={10.1093/imanum/drw074},
  publisher={Oxford University Press}
}

@techreport{schoeberl2014ngsolve,
  author={Sch{\"o}berl, Joachim},
  title={{C++11} Implementation of Finite Elements in {NGSolve}},
  institution={Institute for Analysis and Scientific Computing, Vienna University of Technology},
  type={ASC Report},
  number={30/2014},
  year={2014}
}

@book{hairer2006solving,
  title={Solving Ordinary Differential Equations II: Stiff and Differential-Algebraic Problems},
  author={Hairer, Ernst and Wanner, Gerhard},
  series={Springer Series in Computational Mathematics},
  volume={14},
  edition={2nd},
  year={1996},
  publisher={Springer},
  address={Berlin}
}

@article{dziuk1994convergence,
  author  = {Dziuk, Gerhard},
  title   = {Convergence of a Semi-discrete Scheme for the Curve Shortening Flow},
  journal = {Mathematical Models and Methods in Applied Sciences},
  volume  = {4},
  number  = {4},
  pages   = {589--606},
  year    = {1994},
  doi     = {10.1142/S0218202594000339}
}

@article{banschMorinNochetto2005parametric,
  author  = {B{\"a}nsch, Eberhard and Morin, Pedro and Nochetto, Ricardo H.},
  title   = {A Finite Element Method for Surface Diffusion: The Parametric Case},
  journal = {Journal of Computational Physics},
  volume  = {203},
  number  = {1},
  pages   = {321--343},
  year    = {2005},
  doi     = {10.1016/j.jcp.2004.08.022}
}

@incollection{bgn2020review,
  author    = {Barrett, John W. and Garcke, Harald and N{\"u}rnberg, Robert},
  title     = {Parametric Finite Element Approximations of Curvature-driven Interface Evolutions},
  booktitle = {Handbook of Numerical Analysis},
  volume    = {21},
  pages     = {275--423},
  publisher = {Elsevier},
  address   = {Amsterdam},
  year      = {2020},
  doi       = {10.1016/bs.hna.2019.05.002}
}

@article{li2020dziuk,
  author  = {Li, Buyang},
  title   = {Convergence of {Dziuk}'s Linearly Implicit Parametric Finite Element Method for Curve Shortening Flow},
  journal = {SIAM Journal on Numerical Analysis},
  volume  = {58},
  number  = {4},
  pages   = {2315--2333},
  year    = {2020},
  doi     = {10.1137/19M1305483}
}

@article{yeCui2021dziuk,
  author  = {Ye, Changqing and Cui, Junzhi},
  title   = {Convergence of {Dziuk}'s Fully Discrete Linearly Implicit Scheme for Curve Shortening Flow},
  journal = {SIAM Journal on Numerical Analysis},
  volume  = {59},
  number  = {6},
  pages   = {2823--2842},
  year    = {2021},
  doi     = {10.1137/21M1391626}
}

@article{kovacsLiLubich2019mcf,
  author  = {Kov{\'a}cs, Bal{\'a}zs and Li, Buyang and Lubich, Christian},
  title   = {A Convergent Evolving Finite Element Algorithm for Mean Curvature Flow of Closed Surfaces},
  journal = {Numerische Mathematik},
  volume  = {143},
  pages   = {797--853},
  year    = {2019},
  doi     = {10.1007/s00211-019-01074-2}
}

@article{baiLi2024analysis,
  author  = {Bai, Genming and Li, Buyang},
  title   = {A New Approach to the Analysis of Parametric Finite Element Approximations to Mean Curvature Flow},
  journal = {Foundations of Computational Mathematics},
  volume  = {24},
  pages   = {1673--1737},
  year    = {2024},
  doi     = {10.1007/s10208-023-09622-x}
}

@article{baiLi2025bgn,
  author  = {Bai, Genming and Li, Buyang},
  title   = {Convergence of a Stabilized Parametric Finite Element Method of the {Barrett--Garcke--N{\"u}rnberg} Type for Curve Shortening Flow},
  journal = {Mathematics of Computation},
  volume  = {94},
  number  = {355},
  pages   = {2151--2220},
  year    = {2025},
  doi     = {10.1090/mcom/4019}
}

@article{baiHuLi2024artificial,
  author  = {Bai, Genming and Hu, Jiashun and Li, Buyang},
  title   = {A Convergent Evolving Finite Element Method with Artificial Tangential Motion for Surface Evolution under a Prescribed Velocity Field},
  journal = {SIAM Journal on Numerical Analysis},
  volume  = {62},
  pages   = {2172--2195},
  year    = {2024},
  doi     = {10.1137/23M156968X}
}

@article{elliottFritz2016mesh,
  author  = {Elliott, Charles M. and Fritz, Hans},
  title   = {On Algorithms with Good Mesh Properties for Problems with Moving Boundaries Based on the Harmonic Map Heat Flow and the {DeTurck} Trick},
  journal = {SMAI Journal of Computational Mathematics},
  volume  = {2},
  pages   = {141--176},
  year    = {2016},
  doi     = {10.5802/smai-jcm.12}
}

@article{elliottFritz2017deturck,
  author  = {Elliott, Charles M. and Fritz, Hans},
  title   = {On Approximations of the Curve Shortening Flow and of the Mean Curvature Flow Based on the {DeTurck} Trick},
  journal = {IMA Journal of Numerical Analysis},
  volume  = {37},
  number  = {2},
  pages   = {543--603},
  year    = {2017},
  doi     = {10.1093/imanum/drw020}
}

@incollection{heleinWood2008harmonic,
  author    = {H{\'e}lein, Fr{\'e}d{\'e}ric and Wood, John C.},
  title     = {Harmonic Maps},
  booktitle = {Handbook of Global Analysis},
  editor    = {Krupka, Demeter and Saunders, David},
  publisher = {Elsevier},
  address   = {Amsterdam},
  pages     = {417--491},
  year      = {2008},
  doi       = {10.1016/B978-044452833-9.50009-7}
}

@article{duan2024energy,
  author  = {Duan, Beiping},
  title   = {Energy-Stable and Mesh-Preserving Parametric {FEM} for Mean Curvature Flow of Surfaces},
  journal = {SIAM Journal on Scientific Computing},
  volume  = {46},
  number  = {6},
  pages   = {A3873--A3896},
  year    = {2024},
  doi     = {10.1137/24M1647813}
}

@article{duan2025mesh,
  author  = {Duan, Beiping},
  title   = {Mesh-Preserving and Energy-Stable Parametric {FEM} for Geometric Flows of Surfaces},
  journal = {SIAM Journal on Numerical Analysis},
  volume  = {63},
  number  = {2},
  pages   = {619--640},
  year    = {2025},
  doi     = {10.1137/24M1671542}
}

@article{gaoLi2025geometric,
  author  = {Gao, Guangwei and Li, Buyang},
  title   = {Geometric-structure Preserving Methods for Surface Evolution in Curvature Flows with Minimal Deformation Formulations},
  journal = {Journal of Computational Physics},
  volume  = {524},
  pages   = {113718},
  year    = {2025},
  doi     = {10.1016/j.jcp.2025.113718}
}

@article{gaoGarckeLiTang2026mdr,
  author  = {Gao, Guangwei and Garcke, Harald and Li, Buyang and Tang, Rong},
  title   = {An Energy-Stable Minimal Deformation Rate Scheme for Mean Curvature Flow and Surface Diffusion},
  journal = {SIAM Journal on Scientific Computing},
  volume  = {48},
  number  = {1},
  pages   = {A103--A131},
  year    = {2026},
  doi     = {10.1137/25M1753838}
}

@misc{duanYang2026second,
  author        = {Duan, Beiping and Yang, Zongze},
  title         = {A Second-order Structure-preserving Parametric {FEM} for Surface Evolution},
  year          = {2026},
  eprint        = {2606.08293},
  archiveprefix = {arXiv},
  primaryclass  = {math.NA},
  url           = {https://arxiv.org/abs/2606.08293},
  note          = {Preprint, arXiv:2606.08293}
}

@article{jiangSuZhang2024bdf,
  author  = {Jiang, Wei and Su, Chunmei and Zhang, Ganghui},
  title   = {Stable Backward Differentiation Formula Time Discretization of {BGN}-Based Parametric Finite Element Methods for Geometric Flows},
  journal = {SIAM Journal on Scientific Computing},
  volume  = {46},
  number  = {5},
  pages   = {A2874--A2898},
  year    = {2024},
  doi     = {10.1137/23M1625597}
}

@article{jiangSuZhangZhang2025predictor,
  author  = {Jiang, Wei and Su, Chunmei and Zhang, Ganghui and Zhang, Lian},
  title   = {Predictor-corrector, {BGN}-based Parametric Finite Element Methods for Surface Diffusion},
  journal = {Journal of Computational Physics},
  volume  = {530},
  pages   = {113901},
  year    = {2025},
  doi     = {10.1016/j.jcp.2025.113901}
}

@article{deckelnickNurnberg2026second,
  author  = {Deckelnick, Klaus and N{\"u}rnberg, Robert},
  title   = {Second Order in Time Finite Element Schemes for Curve Shortening Flow and Curve Diffusion},
  journal = {SIAM Journal on Numerical Analysis},
  volume  = {64},
  number  = {1},
  pages   = {103--124},
  year    = {2026},
  doi     = {10.1137/25M1737523}
}

@article{jiangLi2021area,
  author  = {Jiang, Wei and Li, Buyang},
  title   = {A Perimeter-decreasing and Area-conserving Algorithm for Surface Diffusion Flow of Curves},
  journal = {Journal of Computational Physics},
  volume  = {443},
  pages   = {110531},
  year    = {2021},
  doi     = {10.1016/j.jcp.2021.110531}
}

@article{garckeJiangSuZhang2025lagrange,
  author  = {Garcke, Harald and Jiang, Wei and Su, Chunmei and Zhang, Ganghui},
  title   = {Structure-Preserving Parametric Finite Element Method for Surface Diffusion Based on {Lagrange} Multiplier Approaches},
  journal = {SIAM Journal on Scientific Computing},
  volume  = {47},
  number  = {3},
  pages   = {A1983--A2011},
  year    = {2025},
  doi     = {10.1137/24M1687546}
}

@article{garckeNurnbergPraetoriusZhang2025isoparametric,
  author  = {Garcke, Harald and N{\"u}rnberg, Robert and Praetorius, Simon and Zhang, Ganghui},
  title   = {Isoparametric Finite Element Methods for Mean Curvature Flow and Surface Diffusion},
  journal = {Journal of Computational Physics},
  volume  = {539},
  pages   = {114248},
  year    = {2025},
  doi     = {10.1016/j.jcp.2025.114248}
}

@article{duanLiZhang2021highorder,
  author  = {Duan, Beiping and Li, Buyang and Zhang, Zhimin},
  title   = {High-Order Fully Discrete Energy Diminishing Evolving Surface Finite Element Methods for a Class of Geometric Curvature Flows},
  journal = {Annals of Applied Mathematics},
  volume  = {37},
  number  = {4},
  pages   = {405--436},
  year    = {2021},
  doi     = {10.4208/aam.OA-2021-0007}
}

@misc{zhangAndrewsFarrell2026arbitrary,
  author        = {Zhang, Ganghui and Andrews, Boris D. and Farrell, Patrick E.},
  title         = {Arbitrary-order Structure-preserving Discretizations for Geometric Curvature Flows},
  year          = {2026},
  eprint        = {2605.20371},
  archiveprefix = {arXiv},
  primaryclass  = {math.NA},
  url           = {https://arxiv.org/abs/2605.20371},
  note          = {Preprint, arXiv:2605.20371}
}

@article{butcher1975stability,
  author  = {Butcher, John C.},
  title   = {A Stability Property of Implicit {Runge--Kutta} Methods},
  journal = {BIT},
  volume  = {15},
  pages   = {358--361},
  year    = {1975},
  doi     = {10.1007/BF01931672}
}

@article{burrageButcher1979stability,
  author  = {Burrage, Kevin and Butcher, John C.},
  title   = {Stability Criteria for Implicit {Runge--Kutta} Methods},
  journal = {SIAM Journal on Numerical Analysis},
  volume  = {16},
  number  = {1},
  pages   = {46--57},
  year    = {1979},
  doi     = {10.1137/0716004}
}

@article{hairerLubich2014energy,
  author  = {Hairer, Ernst and Lubich, Christian},
  title   = {Energy-diminishing Integration of Gradient Systems},
  journal = {IMA Journal of Numerical Analysis},
  volume  = {34},
  number  = {2},
  pages   = {452--461},
  year    = {2014},
  doi     = {10.1093/imanum/drt031}
}

\end{document}